\DeclarePairedDelimiter\floor{\lfloor}{\rfloor}
\numberwithin{equation}{section}
\newtheorem{theorem}{Theorem}[section]
\newtheorem{lemma}[theorem]{Lemma}
\newtheorem{corollary}[theorem]{Corollary}
\theoremstyle{definition}
\newtheorem{remark}[theorem]{Remark}
\newcommand{\vertexdef}{\tikzstyle{vertex}=[draw,circle,fill=white,minimum size=1mm,inner sep=0pt]
			\tikzstyle{cv}=[white,draw,circle,fill=blue,minimum size = 1.4mm, inner sep=0pt]}
\newcommand{\PIVAL}{3.14159265358979323846264338} %There is no easily accessible constant
\newcounter{i} % Counter for the while loop
\newcommand{\Circulant}[2] { %Accepts 2 argument, the number of nodes, and the list of connections
	\begin{tikzpicture}
	\vertexdef 
	\setcounter{i}{0}
	\whiledo{\value{i}<#1}{ %Start counting through the nodes
		\FPmul\tempA{2}{\thei} %For the X,Y Formulae start with 2*i
		\FPdiv\tempB{\PIVAL}{#1} % pi/(nodes)
		\FPmul\tempC{\tempA}{\tempB} % the product of the previous
		\FPcos\varX{\tempC} % cos that for x
		\FPsin\varY{\tempC} % sin it for y
		% \FPmul\varY{\varY}{#3} % In case of magnification, add a third arg
		% \FPmul\varX{\varX}{#3}
		\stepcounter{i} % Count up 1
		\FPround\varX{\varX}{3}
		\FPround\varY{\varY}{3}
		\node (\thei) at (\varX,\varY)[place]{ }; % Draw the nodes
	    \foreach \x in {#2} { % For all the variables in the Vertices list
			\pgfmathparse{mod(\x+\thei,#1)} % Do the modular count forward
			\let\tempB\pgfmathresult
			\pgfmathparse{mod(\thei-\x,#1)} % And the modular count backward
			\let\tempA\pgfmathresult
			\ifthenelse{\lengthtest{\tempA pt < 1 pt}}{\FPadd\tempA{\tempA}{#1}}{}
			\ifthenelse{\lengthtest{\tempB pt < 1 pt}}{\FPadd\tempB{\tempB}{#1}}{}
			% Fix some mod problems with 0 or negative numbers
			\ifthenelse{\lengthtest{\tempA pt > \thei pt}}{}{\ifthenelse{\thei = \tempA}{}{\draw [] (\thei) to (\tempA)}};
			\ifthenelse{\lengthtest{\tempB pt > \thei pt}}{}{\ifthenelse{\thei = \tempB}{}{\draw [] (\thei) to (\tempB)}};
			%And draw the lines!
		}
	}
	\setcounter{i}{0}
	\whiledo{\value{i}<#1}{ %Start counting through the nodes
		\FPmul\tempA{2}{\thei} %For the X,Y Formulae start with 2*i
		\FPdiv\tempB{\PIVAL}{#1} % pi/(nodes)
		\FPmul\tempC{\tempA}{\tempB} % the product of the previous
		\FPcos\varX{\tempC} % cos that for x
		\FPsin\varY{\tempC} % sin it for y
		% \FPmul\varY{\varY}{#3} % In case of magnification, add a third arg
		% \FPmul\varX{\varX}{#3}
		\stepcounter{i} % Count up 1
		\FPround\varX{\varX}{3}
		\FPround\varY{\varY}{3}
		%\node (\thei) at (\varX,\varY)[place]{ }; % Draw the nodes
		\node[vertex] at (\varX,\varY){};
	}
	\end{tikzpicture}
}
\newcommand{\mob}{\mathbin{\Bowtie}} %make the bowtie (mobius product) a binary operator
\newcommand{\sq}{\mathbin{\square}}  %make the square (Cartesian product) a binary operator
\newcommand{\R}{\ensuremath{\mathbb{R}}}
\begin{document}
\tikzstyle{place}=[draw,circle,minimum size=1.2mm,inner sep=1pt,outer sep=-1.1pt,fill=black]

\title{Maximum nullity and zero forcing  of circulant graphs}
%\author[A. Bates]{Alex Bates}
%\address{To add}
%\email{To add}

%\author[L. Fink]{Laura Fink}
%\address{To add}
%\email{To add}
\author[L. Duong]{Linh Duong}
\address{Department of Mathematics,  University of St. Thomas,  St. Paul, MN, 55105, USA}
\email{linhduong0527@gmail.com}

\author[B.K. Kroschel]{Brenda K. Kroschel}
\address{Department of Mathematics,  University of St. Thomas,  
St. Paul, MN, 55105, USA }
\email{bkkroschel@stthomas.edu}

\author[M. Riddell]{Michael Riddell}
\address{Department of Mathematics \& Statistics\\
McMaster University \\
Hamilton, ON, L8S 4L8, Canada}
\curraddr{386 Whitney Ave.
Hamilton, ON
L8S 2H4}
\email{riddelmj@mcmaster.ca, michaeljamesriddell@gmail.com}

\author[K.N. Vander Meulen]{Kevin N. Vander Meulen}
\address{Department of Mathematics\\
Redeemer University College, Ancaster, ON, L9K 1J4, Canada}
\email{kvanderm@redeemer.ca}

\author[A. Van Tuyl]{Adam Van Tuyl}
\address{Department of Mathematics \& Statistics\\
McMaster University \\
Hamilton, ON, L8S 4L8, Canada}
\email{vantuyl@math.mcmaster.ca}
\date{\today}

\keywords{zero forcing, minimum rank, maximum nullity, circulant graph, bipartite graph, graph product.} 
%\textbf{Keywords.} 

\subjclass[2010]{05C50, 05C75, 05C76, 15A03}
%\textbf{MSC.}

\begin{abstract}
It is well-known that the zero forcing number of a graph provides a lower bound on the minimum rank of a graph. 
In this paper we bound and characterize the zero forcing number of certain circulant graphs, including some bipartite circulants, cubic
circulants, and circulants which are torus products, 
to obtain bounds on the minimum
rank and the maximum nullity. We also evaluate when the zero forcing number will give equality.
\end{abstract}

\maketitle
%%%%%%%%%%%%%%%%%%%%%%%%%%%%%%%%%%%%%%%%%%%%%%%%%%%%%%%%%%%%%%%%%%%

\setlength{\parindent}{0pt}

%%%%%%%%%%%%%%%%%%%%%%%%%%%%%%%%%%%%%%%%%%%%%%%%%%%%%%%%%%%%%%%%%%%%%%

\section{\textbf{Introduction}}
Let $G$ be a simple finite graph with vertex set $V(G)$ and
edge set $E(G)$.  Suppose in the graph $G$ some vertices are filled and some are unfilled.  The {\it filling rule} is as follows:  if a vertex $v \in V(G)$ is filled and has exactly one unfilled neighbor, $w$, then vertex $v$ forces $w$ to be filled, and $v$ is referred to as
a \emph{forcing vertex}.  
%The notion that $v$ forces $w$ is denoted $v \rightarrow w$.  
Given $F \subseteq V(G)$, the 
{\it final filling} of $F$ is the set of
filled vertices obtained by initially filling the vertices of $F$ and leaving every vertex in $V(G) \setminus F$ unfilled and applying the filling rule until no more vertices can be filled.  The set $F$ is called a {\it zero forcing set} if the final filling of $F$ is $V(G)$.  The 
terminology of zero forcing arose in the context of forcing
entries of a null vector to be zero 
as first described in \cite{AIM}.   An example of a zero forcing set 
is given in Figure \ref{figure1}.
 \begin{figure}[ht]
	\label{fig:forceEG}
	\begin{center}
		\begin{tikzpicture}
		\vertexdef
		\node (0) at (-0.5,0.5){$G_1$};
		\draw (0,0) -- (0,1);
		\draw (0,1) -- (1,1);
	    \draw (1,0) -- (1,1);
	    \draw (0,0) -- (1,0);
	    \draw (1,0) -- (2,0);
	    \draw (1,1) -- (2,1);
	    \draw (2,1) -- (2,0);
	     %vertices
		\node[cv] (0,0) {};
		\node[vertex] at (1,0){};
		\node[cv] at (0,1) {};
		\node[vertex] at (1,1){};
		\node[vertex] at (2,0){};
		\node[vertex] at (2,1){};
	\end{tikzpicture} \qquad
	\begin{tikzpicture}
                 \vertexdef
		\node (0) at (2.5,0.5) {$G_2$};
		\draw (0,0) -- (0,1);
		\draw (1,0) -- (1,1);
	    \draw (0,0) -- (1,0);
	    \draw (0,1) -- (1,1);
	    \draw (1,0) -- (2,0);
	    \draw (1,1) -- (2,1);
	    \draw (2,1) -- (2,0);
	    \draw (0,0) -- (2,1);
		\node[cv] (0,0) {};
		\node[vertex] at (1,0){};
		\node[cv] at (0,1) {};
		\node[vertex] at (1,1){};
		\node[vertex] at (2,0){};
		\node[vertex] at (2,1){};
	\end{tikzpicture}
				\end{center}
	\caption{The filled vertices are a zero forcing set in $G_1$ but not in $G_2$.}\label{figure1}
\end{figure}
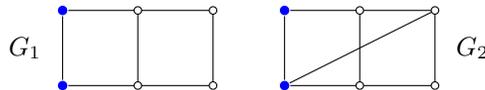

In various applications, it is of interest
to find the cardinality of a smallest zero forcing set 
in $G$ (which always exists since $V(G)$ is a trivial zero forcing set). 
The \emph{zero forcing number} of $G$, denoted $Z(G)$, is the minimum cardinality of a zero forcing set for a graph $G$. Determining $Z(G)$ is NP-hard \cite{AA} in general, but has been calculated for some well-known classes of graphs (see, for example, \cite{AIM, FH, Cat}). Variations of
zero forcing have been useful in communication complexity, quantum mechanics, graph theory, and
some inverse eigenvalue problems (see \cite{FH} for references).
The zero forcing number originated in \cite{AIM} as a technique
to find a bound on the minimum rank of a symmetric matrix associated with a graph.  Let $\mathcal{S}(G)$ denote the set of symmetric matrices over $\R$ whose
graph is $G$. In particular, if $G$ is a graph with vertices $v_0,v_1,..., v_{n-1}$, then $A \in \mathcal{S}(G)$ if $A$ is a real symmetric matrix
such that for $i\neq j$,  $A_{ij}\neq 0$ if and only if $v_i$ is adjacent to $v_j$ in $G$.  Note that if $A \in \mathcal{S}(G)$, then 
there is no restriction on the diagonal entries of $A$.
Let $M(G)=\max \{ {\rm{nullity}}(A)\ | \  A \in \mathcal{S}(G) \}$. It was demonstrated in
\cite{AIM} that $Z(G)$ provides an upperbound on $M(G)$.

\begin{theorem}\cite{AIM}\label{Mbound} Let $G$ be a graph and let $F \subset V(G)$ be a zero forcing set of $G$.
Then $M(G) \leq |F |$, and thus $M(G) \leq Z(G)$.
\end{theorem}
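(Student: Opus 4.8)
The plan is to fix a matrix $A \in \mathcal{S}(G)$ attaining $\operatorname{nullity}(A) = M(G)$ and to argue that the only null vector of $A$ vanishing on all of $F$ is the zero vector; a dimension count will then give $M(G) \le |F|$, and taking $F$ of minimum size will yield the second inequality $M(G)\le Z(G)$.

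The first step is to establish a ``forcing rule for null vectors'': if $Ax = 0$, and $v$ is a vertex with $x_v = 0$ and $x_u = 0$ for every neighbour $u$ of $v$ except possibly one vertex $w$, then $x_w = 0$ as well. This follows by reading off the $v$-th coordinate of $Ax = 0$, namely $A_{vv}x_v + \sum_{u \sim v} A_{vu} x_u = 0$, since $A_{vu} = 0$ whenever $u \ne v$ and $u$ is not adjacent to $v$. Every term on the left except $A_{vw}x_w$ vanishes by hypothesis, and $A_{vw} \ne 0$ because $v$ is adjacent to $w$ in $G$, so $x_w = 0$. Thus the combinatorial filling rule, read as ``$x$ is zero at this vertex,'' is respected coordinatewise by any null vector of any matrix in $\mathcal{S}(G)$.

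Next I would consider the subspace $W = \{\, x \in \R^{|V(G)|} : Ax = 0,\ x_i = 0 \text{ for all } i \in F \,\}$. Given $x \in W$, I would run through a sequence of forces that fills $V(G)$ starting from $F$, and at each force invoke the rule above — the forcing vertex and all but one of its neighbours are, by the inductive hypothesis, already zero coordinates of $x$ — to conclude that $x_j = 0$ for every $j$ in the final filling of $F$, that is, for every $j \in V(G)$. Hence $W = \{0\}$. Finally, since $W$ is cut out of $\ker A$ by the $|F|$ linear equations $x_i = 0$ ($i \in F$), we have $0 = \dim W \ge \dim \ker A - |F| = M(G) - |F|$, so $M(G) \le |F|$; choosing $F$ to be a zero forcing set of minimum size gives $M(G) \le Z(G)$.

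The only delicate point is the bookkeeping in the inductive step: one must fix an ordering of the forces and check that, at each stage, the hypotheses of the null-vector forcing rule are met — which is precisely the statement that $F$ is a zero forcing set. Everything else is elementary linear algebra.
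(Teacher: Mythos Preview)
Your argument is correct and is essentially the standard proof of this result. Note, however, that the paper does not supply its own proof of this theorem: it is stated with a citation to \cite{AIM} and used as a black box throughout. What you have written is precisely the argument given in \cite{AIM}, so there is nothing to compare against within the present paper.
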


Recent work~\cite{AEG} describes families of graphs for which equality holds in Theorem~\ref{Mbound}, that is, families of graphs $G$ with $M(G)=Z(G)$. 
%In this paper, we provide some
%families of circulant graphs $G$ with $M(G)=Z(G)$.
If we let ${\rm mr}(G)=\min \{ {\rm rank}(A)\ |\ A\in \mathcal{S}(G) \}$, then the rank theorem tells us that ${\rm mr}(G)+M(G)=n$. Hence Theorem~\ref{Mbound} demonstrates that zero forcing can provide a lower bound on the minimum rank of any symmetric matrix associated with a graph. 

Before going forward,
we state some known facts about zero forcing, 
which can be found in \cite{AIM}.

\begin{lemma}\cite{AIM} \leavevmode 
\begin{enumerate}\label{lem:facts}   
\item \label{lem:kregular}
 For any $k$-regular graph $G$, $Z(G)\geq k$. 
\item For $n>1$, $M(K_n)=Z(K_n)=n-1$.
\item For $n\geq 3$, $Z(C_n)=M(C_n)=2$.
\item For $n\geq 5$, $Z(\overline{C_n})=M(\overline{C_n})=n-3$, where
$\overline{G}$ denotes the complement of the graph $G$.
\item \label{union} For disjoint graphs $G$ and $H$, $Z(G \cup H)=Z(G)+Z(H).$
\item \label{lem:Cartesian} For the Cartesian product of graphs $G$ and $H$, 
$Z(G \sq  H) \leq \min \{Z(G)|H|, Z(H)|G| \} $. \item \label{lem:KsC} For $m\geq 2$, $M(K_2 \sq C_m)=Z(K_2 \sq C_m)=\min \{m,4\}.$ % [IAM Thm 3.8]
\end{enumerate}
\end{lemma}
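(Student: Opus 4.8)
The plan is to squeeze $\min\{m,4\}$ between $M(K_2\sq C_m)$ and $Z(K_2\sq C_m)$. Since Theorem~\ref{Mbound} already gives $M(K_2\sq C_m)\le Z(K_2\sq C_m)$, it is enough to establish the two one-sided bounds $Z(K_2\sq C_m)\le \min\{m,4\}$ and $M(K_2\sq C_m)\ge \min\{m,4\}$, as these pinch both parameters to $\min\{m,4\}$. The upper bound on $Z$ is immediate from Lemma~\ref{lem:facts}(\ref{lem:Cartesian}): using $Z(K_2)=1$ and, for $m\ge 3$, $Z(C_m)=2$ (Lemma~\ref{lem:facts}(3)), while $K_2\sq C_2=C_4$ covers $m=2$, we get $Z(K_2\sq C_m)\le\min\{Z(K_2)\,|C_m|,\ Z(C_m)\,|K_2|\}=\min\{m,4\}$. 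So the real work is to construct, for each $m$, a matrix $A\in\mathcal S(K_2\sq C_m)$ of nullity $\min\{m,4\}$.

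For $m\ne 4$ I would exploit the $\mathbb{Z}_m$ rotational symmetry of $K_2\sq C_m$. Labelling the vertices $u_0,\dots,u_{m-1}$ and $w_0,\dots,w_{m-1}$ with $u_j\sim u_{j\pm1}$, $w_j\sim w_{j\pm1}$, $u_j\sim w_j$, I take $A$ to be the $2m\times 2m$ block circulant matrix with distance-$0$ block $\left(\begin{smallmatrix}a&c\\ c&b\end{smallmatrix}\right)$ (the rungs, $c\ne0$), distance-$(\pm1)$ block $\operatorname{diag}(e,f)$ (the two cycles, $e,f\ne0$), and all other blocks zero; one checks that the graph of this matrix is exactly $K_2\sq C_m$ for every $m\ge 3$. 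Block-diagonalizing the circulant by the discrete Fourier transform gives $\operatorname{nullity}(A)=\sum_{k=0}^{m-1}\operatorname{nullity}(M_k)$ with $M_k=\left(\begin{smallmatrix}a+2e\cos\theta_k&c\\ c&b+2f\cos\theta_k\end{smallmatrix}\right)$ and $\theta_k=2\pi k/m$; since $c\ne0$ each $M_k$ has nullity $0$ or $1$, and $\det M_k=q(\cos\theta_k)$ for a fixed quadratic $q$ (once $a,b,c,e,f$ are chosen). For $m\ge 5$ I would pick the parameters so that $q$ has the two roots $\cos(2\pi/m)$ and $\cos(4\pi/m)$, which are distinct and lie in $(-1,1)$; then $\det M_k=0$ precisely for $k\equiv\pm1,\pm2\pmod m$, four values of $k$, so $\operatorname{nullity}(A)=4$. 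For $m=3$ the same recipe with $q$ vanishing at $1$ and $\cos(2\pi/3)=-\tfrac12$ gives nullity $1+2=3$, and $m=2$ is just $M(C_4)=2$ (Lemma~\ref{lem:facts}(3)).

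The exceptional case is $m=4$, where $K_2\sq C_4\cong Q_3$ and $\{\cos\theta_k\}=\{1,0,-1\}$ with $0$ repeated, so a quadratic cannot vanish at enough of these values and the circulant construction only reaches nullity $3$. Here I would instead use that $Q_3$ is bipartite, with classes $X,Y$ of size $4$ whose missing bipartite edges form a perfect matching: after relabelling, any $4\times4$ matrix $B$ with zero pattern $J_4-I_4$ yields a matrix $\left(\begin{smallmatrix}0&B\\ B^{\mathsf{T}}&0\end{smallmatrix}\right)\in\mathcal S(Q_3)$, and taking $B=v\mathbf{1}^{\mathsf{T}}-\mathbf{1}v^{\mathsf{T}}$ with $v=(1,2,3,4)^{\mathsf{T}}$ (so $B_{ij}=i-j$) makes $B$ skew-symmetric and nonzero, hence of rank $2$, so $\operatorname{nullity}\left(\begin{smallmatrix}0&B\\ B^{\mathsf{T}}&0\end{smallmatrix}\right)=4$; equivalently one may simply quote the known value $M(Q_3)=4$. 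Combining the three cases with the two one-sided bounds yields $M(K_2\sq C_m)=Z(K_2\sq C_m)=\min\{m,4\}$. The main obstacle is exactly the arithmetic of the cosines at $m=3,4$: the determinant that must vanish is a quadratic in $\cos\theta_k$, so it can kill only a bounded number of blocks, and at $m=4$ the available values $\{1,0,-1\}$ are too few, which is what forces the separate, non-circulant treatment of $Q_3$.
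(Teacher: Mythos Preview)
The paper does not prove Lemma~\ref{lem:facts}; it is recorded as a list of known facts from \cite{AIM}, with no argument given. So there is no proof in the paper to compare yours against.

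Your proposal, which treats part~(\ref{lem:KsC}) in detail, is correct. The upper bound $Z(K_2\sq C_m)\le\min\{m,4\}$ via part~(\ref{lem:Cartesian}) is immediate. For the lower bound on $M$, the block-circulant/DFT construction for $m\neq 4$ works: with $e=f=1$, $a=b=-(r_1+r_2)$, $c=r_1-r_2$ (where $r_1,r_2$ are the two target cosine values) one gets real nonzero parameters and $q(x)=4(x-r_1)(x-r_2)$, so the nullity count goes through. Your diagnosis that the circulant ansatz tops out at nullity $3$ when $m=4$ is right, since a nonzero quadratic cannot vanish at all three of $\{1,0,-1\}$, and the replacement argument for $Q_3$ via the rank-$2$ skew matrix $B=v\mathbf{1}^{\mathsf T}-\mathbf{1}v^{\mathsf T}$ is clean (the biadjacency zero pattern of $Q_3$ really is $J_4-I_4$ after matching each even-weight vertex with its antipode). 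This is more explicit than what \cite{AIM} actually does for this example, where the lower bound on $M$ is obtained by a general product inequality rather than by exhibiting matrices.
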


In this paper we explore the zero forcing number for various classes of circulant graphs. Section \ref{properties} defines
circulant graphs and  reviews some of their properties.  In addition, we extend Deaett and Meyer's results on consecutive circulants \cite{DM}. The maximum nullity and zero forcing number of circulant graphs that are bipartite is explored in Section \ref{bipartite}.  For every bipartite circulant $G$ considered in this section, 
$Z(G) = M(G)$.  Section \ref{torus} introduces the torus product of a graph, noting that certain circulant graphs can be viewed as a torus product. The section explores the zero forcing number and maximum nullity for several cases of torus products.  The M\"obius ladder is a special case of a torus product.  We note that for many circulant graphs $G$ which are torus products, the numbers
$Z(G)$ and $M(G)$ are again equal, but there are still cases for which this is an open question.   In Section 5, we 
show that for all cubic circulant graphs $G$, 
$Z(G) = M(G)$,
and we compute
this value.

%%%%%%%%%%%%%%%%%%%%%%%%%%%%%%%%%%%%%%%%%%%%%%%%%%

\section{\textbf{Properties of circulant graphs}}\label{properties}

We recall some of the properties of circulant graphs, and derive some basic results on the zero forcing number 
and minimal rank for this family.  For standard graph theory
terminology, see \cite{W}.

Given an integer $n\geq 1$ and a subset 
$S \subseteq \{1,2,\ldots, \floor{\frac{n}{2}}\}$, a \emph{circulant graph} $C_n(S)$ is a graph with vertex set $V(G) = \{v_0,v_1,\ldots,v_{n-1}\}$ and edge set $E(G) = \{\{v_i,v_{i+j}\} ~|~ i \in \{0,\ldots,n-1\}$ and $j \in S\}$, taking subscripts modulo $n$. Note that if 
$S = \{s_1,\ldots,s_t\}$, then we will abuse notation
and write $C_n(s_1,\ldots,s_t)$ instead of $C_n(\{s_1,\ldots,
s_t\})$.  Furthermore, we assume that
$s_1 < s_2 < \cdots < s_t$.
Some examples of circulant graphs are given in Figures 
\ref{figure2} and \ref{fig:12-124}.

\begin{figure}[ht]
	\label{fig:k2cart}
	\begin{center}
		\begin{tikzpicture}
		\vertexdef
		%labels
		\node (n0) at (0,-0.5) {$v_0$};
		\node (n1) at (0,1.5)  {$v_7$};
		\node (n2) at (1,-0.5) {$v_2$};
		\node (n3) at (1,1.5) {$v_9$};
		\node (n4) at (2,-0.5)  {$v_4$};
		\node (n5) at (2,1.5) {$v_{11}$};
		\node (n6) at (3,-0.5)  {$v_6$};
		\node (n7) at (3,1.5) {$v_{13}$};
		\node (n8) at (4,-0.5) {$v_8$};
		\node (n9) at (4,1.5)  {$v_1$};
		\node (n10) at (5,-0.5) {$v_{10}$};
		\node (n11) at (5,1.5)  {$v_{3}$};
		 \node (n12) at (6,-0.5) {$v_{12}$};
		\node (n13) at (6,1.5) {$v_{5}$};
		[thick,scale=0.8]%

		\foreach \x in {0}
		%horiz. edges for C_7
		\draw (0,0) -- (6,0);
		\draw (0,1) -- (6,1);
		
		%curve edges for C_7
		\draw (0,0) to[out=10, in=170] (6,0);
		\draw (0,1) to[out=350, in=190] (6,1);
		
		% vertical edges for K_2
		\draw (0,0) -- (0,1);
		\draw (1,0) -- (1,1);
		\draw (2,0) -- (2,1);
		\draw (3,0) -- (3,1);
		\draw (4,0) -- (4,1);
		\draw (5,0) -- (5,1);
		\draw (6,0) -- (6,1);
		
		%vertices
		\node[cv] (0,0) {};
		\node[cv] at (1,0) {};
		\node[vertex] at (2,0) {};
		\node[vertex] at (3,0) {};
		\node[vertex] at (4,0) {};
		\node[vertex] at (5,0) {};
		\node[vertex] at (6,0)  {};
		\node[cv] at (0,1) {};
		\node[cv] at (1,1) {};
		\node[vertex] at  (2,1) {};
		\node[vertex] at (3,1) {};
		\node[vertex] at (4,1) {};
		\node[vertex] at (5,1) {};
		\node[vertex] at (6,1) {};

		\end{tikzpicture}\end{center}
	\caption{The graph $C_{14}(2,7) \cong   
	K_2 \sq C_7$ (see Section 4) with a zero forcing set.}\label{figure2}
\end{figure}
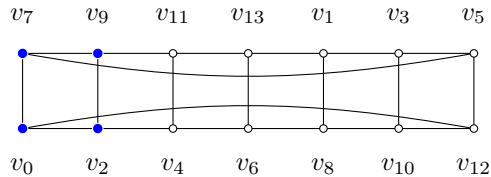

\begin{figure}[ht]
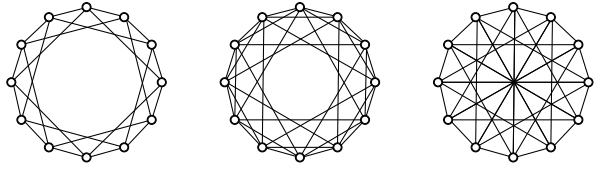

    \centering
    \[\Circulant{12}{1,3} \qquad \Circulant{12}{1,2,4}
    \qquad \Circulant{12}{1,4,6}\]
    \caption{The circulant graphs $C_{12}(1,3)$, $C_{12}(1,2,4)$
    and $C_{12}(1,4,6)$.}
    \label{fig:12-124}
\end{figure}

Since circulant graphs are vertex transitive, they are regular graphs.  In particular,  if 
$G = C_n(s_1,\ldots,s_t)$, then $G$ is $(2t-1)$-regular if $2s_t = n$, and $2t$-regular otherwise.  Combining this observation with Lemma~\ref{lem:facts}(1) gives a lower bound on  the zero forcing number of a circulant graph:

\begin{theorem}\label{thm:regular}
	Suppose $G = C_n(s_1,\ldots,s_t)$. 
	If $2s_t = n$, then $Z(G)\geq 2t-1$.
	If $2s_t \neq n$, then $Z(G)\geq 2t$.
\end{theorem}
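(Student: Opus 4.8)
The plan is to obtain the bound as an immediate consequence of the fact, recorded just above the statement, that $C_n(s_1,\ldots,s_t)$ is $(2t-1)$-regular when $2s_t=n$ and $2t$-regular otherwise, combined with Lemma~\ref{lem:facts}(1). For completeness I would first re-justify the degree count. Fix a vertex $v_i$ of $G=C_n(s_1,\ldots,s_t)$. By the definition of a circulant graph, the neighbours of $v_i$ are precisely the vertices appearing in the list $v_{i+s_1},\ldots,v_{i+s_t},v_{i-s_1},\ldots,v_{i-s_t}$, with subscripts taken modulo $n$. Since $1\le s_j\le\floor{n/2}<n$, none of these equals $v_i$; and for $j\neq k$ we have $v_{i+s_j}\neq v_{i+s_k}$ and $v_{i-s_j}\neq v_{i-s_k}$ because $0<|s_j-s_k|<n$. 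Hence the only way two entries of the list can coincide is $v_{i+s_j}=v_{i-s_k}$, i.e.\ $s_j+s_k\equiv 0\pmod n$. As $2\le s_j+s_k\le 2\floor{n/2}\le n$, this forces $s_j+s_k=n$, and with $s_j,s_k\le\floor{n/2}$ this is possible only if $s_j=s_k=n/2$, that is, only when $2s_t=n$ and $j=k=t$. Therefore $\deg v_i=2t$ if $2s_t\neq n$, and $\deg v_i=2t-1$ if $2s_t=n$.

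Since circulant graphs are vertex-transitive, $G$ is regular of the degree just computed, so applying Lemma~\ref{lem:facts}(1) to the $k$-regular graph $G$ yields $Z(G)\ge k$, which is exactly the stated inequality in each of the two cases.

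There is no real obstacle here: the theorem is a one-line corollary of the degree computation together with Lemma~\ref{lem:facts}(1). The only point that takes a moment's care is the ``wrap-around'' coincidence $s_j+s_k=n$ among the listed neighbours of a vertex, and it is precisely the presence or absence of this coincidence --- equivalently, whether or not $2s_t=n$ --- that accounts for the split into the two cases.
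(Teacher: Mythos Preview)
Your proof is correct and follows exactly the approach of the paper: compute the degree of the regular circulant graph (the paper states this just before the theorem) and apply Lemma~\ref{lem:facts}(\ref{lem:kregular}). Your write-up is simply a more detailed justification of the degree count than the paper itself gives.
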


Not every circulant graph is a connected graph (see Figure~\ref{fig:discon}). The connected circulant graphs were characterized by Boesch and Tindell \cite{BT}; in the
statement below, we write $gG$ to denote $g$ disjoint
copies of the graph $G$.

\begin{theorem}
\label{thm:discon}
%\cite{BT} 
%If $1 < s_1,s_2,\ldots,s_t < n$, then the 
If $G=C_n(s_1,s_2,\ldots,s_t),$ then $G$ is connected if and only if $gcd(n,s_1,s_2,\ldots,s_t)=1$. 
If $\gcd(s_1,s_2,\ldots,s_t,n)=g$, then  $C_n(s_1,s_2,\ldots,s_t) \cong gC_{\frac{n}{g}}(\frac{s_1}{g},\frac{s_2}{g},\ldots,\frac{s_t}{g})$.
\end{theorem}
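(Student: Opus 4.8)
The plan is to prove the two assertions separately, though the second will essentially recover the first. For the connectivity criterion, I would argue as follows. Let $g = \gcd(n, s_1, \ldots, s_t)$. First suppose $g > 1$. Observe that from any vertex $v_i$, a single step along an edge changes the index by $\pm s_k$ for some $k$, and since $g \mid s_k$ for every $k$, any walk starting at $v_0$ can only reach vertices $v_j$ with $j \equiv 0 \pmod{g}$ (indices taken mod $n$, which is consistent because $g \mid n$). Hence $v_0$ and $v_1$ lie in different components, so $G$ is disconnected. Conversely, suppose $g = 1$. Then there exist integers $a_0, a_1, \ldots, a_t$ with $a_0 n + a_1 s_1 + \cdots + a_t s_t = 1$. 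Reading this modulo $n$, the index $1$ is an integer combination $\sum_k a_k s_k$ of the connection values, so starting at $v_0$ and taking $|a_k|$ steps of $\pm s_k$ for each $k$ produces a walk from $v_0$ to $v_1$. By vertex transitivity (or directly, since the same shift works everywhere), $v_i$ is connected to $v_{i+1}$ for all $i$, so $G$ is connected.

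For the structural isomorphism, assume $\gcd(s_1, \ldots, s_t, n) = g$ and write $n = g n'$, $s_k = g s_k'$. I would define a map $\varphi$ from $V(C_n(s_1,\ldots,s_t))$ to $V\bigl(g\, C_{n'}(s_1',\ldots,s_t')\bigr)$: index the $g$ disjoint copies by the residue $r \in \{0, 1, \ldots, g-1\}$, label the vertices of the $r$-th copy as $u^{(r)}_0, \ldots, u^{(r)}_{n'-1}$, and send $v_i \mapsto u^{(r)}_{q}$ where $i = q g + r$ is the division of $i$ by $g$ with $0 \le r < g$ (equivalently, $r \equiv i \pmod g$ and $q \equiv$ the appropriate value mod $n'$). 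This is clearly a bijection. To check it is a graph isomorphism, note that $\{v_i, v_j\}$ is an edge of $C_n(s_1,\ldots,s_t)$ iff $j - i \equiv \pm s_k \pmod n$ for some $k$. Since each $s_k$ is divisible by $g$, such an edge forces $i \equiv j \pmod g$, so both endpoints lie in the same copy; and within that copy, $j - i \equiv \pm g s_k' \pmod{g n'}$ translates, after dividing by $g$, exactly to $q_j - q_i \equiv \pm s_k' \pmod{n'}$, which is the edge condition in $C_{n'}(s_1',\ldots,s_t')$. Conversely every edge of the disjoint union arises this way. Hence $\varphi$ preserves adjacency and non-adjacency.

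The main point requiring care is not any single step but making the bookkeeping with the modular arithmetic fully rigorous — in particular verifying that the division-by-$g$ map on indices is well-defined modulo the respective cyclic orders (that $i \bmod n$ determines both $r = i \bmod g$ and $q = \lfloor i/g \rfloor \bmod n'$ consistently, which uses $n = gn'$), and confirming that $S' = \{s_1', \ldots, s_t'\} \subseteq \{1, \ldots, \lfloor n'/2 \rfloor\}$ so that $C_{n'}(s_1',\ldots,s_t')$ is a legitimately-indexed circulant. Once the notation is set up, everything reduces to the observation that multiplication by $g$ is a ring isomorphism from $\mathbb{Z}/n' \to g\mathbb{Z}/n \subseteq \mathbb{Z}/n$ compatible with the decomposition $\mathbb{Z}/n \cong \bigsqcup_{r} (g\mathbb{Z} + r)/n$ of index classes. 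I would cite \cite{BT} for the connectivity half and give the short isomorphism argument explicitly, since it is used repeatedly in later sections.
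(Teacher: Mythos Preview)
Your argument is correct and entirely standard: the Bezout/residue-class analysis for connectivity and the explicit bijection $v_i \mapsto u^{(i \bmod g)}_{\lfloor i/g\rfloor}$ for the decomposition are exactly the natural proofs, and your remarks about the well-definedness of the index map and the containment $s_k' \le \lfloor n'/2\rfloor$ address the only points that need checking.

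Note, however, that the paper itself does \emph{not} supply a proof of this theorem. It is stated as a known result attributed to Boesch and Tindell~\cite{BT}, with no argument given in the text. So rather than differing from the paper's proof, your proposal is strictly more detailed: you are writing out what the paper simply cites. Your closing suggestion---to cite~\cite{BT} for the connectivity criterion and include the short isomorphism computation explicitly---is in fact slightly more expansive than what the paper does (it cites~\cite{BT} for both parts and moves on), but is a perfectly reasonable choice given how often the decomposition is invoked later.
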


\begin{figure}[ht]
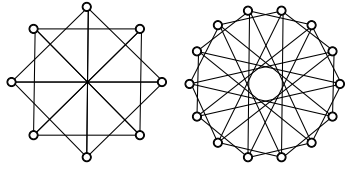

	\[\Circulant{8}{2,4}\ \ \Circulant{14}{2,6}\]
	\caption{The disconnected circulant graphs $C_{8}(2,4) \cong 2K_4 = K_4\cup K_4$, and $C_{14}(2,6) \cong 2C_7(1,3)$.}
	\label{fig:discon}
\end{figure}

Using  the function $f: \mathbb{Z}_n \rightarrow \mathbb{Z}_n$, defined by $f(x) = kx$, Muzychuk \cite{Mu} proved the following
graph isomorphism between circulant graphs.

\begin{lemma}
\label{lem:iso}
If $n>1$ and $\gcd(k,n) = 1$, then $C_n(s_1,s_2,\ldots,s_t) \cong
C_n(ks_1,ks_2,\ldots,ks_t)$.
\end{lemma}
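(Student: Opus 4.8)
The plan is to exhibit an explicit isomorphism given by multiplication by $k$ on the index set $\mathbb{Z}_n$. Define $\phi \colon V(C_n(s_1,\ldots,s_t)) \to V(C_n(ks_1,\ldots,ks_t))$ by $\phi(v_i) = v_{ki \bmod n}$. Since $\gcd(k,n)=1$, the map $x \mapsto kx$ is a bijection on $\mathbb{Z}_n$, with inverse multiplication by $k^{-1} \bmod n$, so $\phi$ is a bijection between the two vertex sets.

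Before the adjacency check I would settle one bookkeeping point: the connection set of a circulant is required to lie in $\{1,\ldots,\floor{\frac{n}{2}}\}$, whereas the residues $ks_i \bmod n$ need not. The convention is that $C_n(ks_1,\ldots,ks_t)$ denotes the circulant whose connection set is obtained by reducing each $ks_i$ modulo $n$ and, whenever the reduction exceeds $\floor{\frac{n}{2}}$, replacing it by $n - (ks_i \bmod n)$; this does not change the graph because $\{v_a, v_{a+j}\} = \{v_a, v_{a-(n-j)}\}$ for all $a$. I would also note that no $ks_i$ is $\equiv 0 \pmod n$ (as $\gcd(k,n)=1$ and $1 \le s_i \le \floor{\frac{n}{2}} < n$), and that the collection $\{\pm ks_1,\ldots,\pm ks_t\}$ of signed connection values again consists of $t$ distinct $\pm$-pairs, since multiplication by $k$ permutes $\mathbb{Z}_n$ and commutes with negation. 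Hence $C_n(ks_1,\ldots,ks_t)$ is a well-defined circulant graph, of the same regularity as $C_n(s_1,\ldots,s_t)$.

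The core step is then routine: $\{v_i, v_j\}$ is an edge of $C_n(s_1,\ldots,s_t)$ iff $i - j \equiv \pm s_\ell \pmod n$ for some $\ell$, iff $ki - kj \equiv \pm ks_\ell \pmod n$ for some $\ell$ (using that $x \mapsto kx$ is a bijection on $\mathbb{Z}_n$), iff $\{v_{ki \bmod n}, v_{kj \bmod n}\} = \{\phi(v_i),\phi(v_j)\}$ is an edge of $C_n(ks_1,\ldots,ks_t)$. Thus $\phi$ preserves and reflects adjacency, so it is a graph isomorphism. The only genuine obstacle is the normalization issue of the second paragraph, namely confirming that the notation $C_n(ks_1,\ldots,ks_t)$ names a legitimate circulant and that rewriting each $ks_i$ into the canonical range $\{1,\ldots,\floor{\frac{n}{2}}\}$ leaves the edge set unchanged; once this is in place, the adjacency equivalence is immediate.
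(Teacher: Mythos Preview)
Your proof is correct and follows exactly the approach the paper indicates: it cites Muzychuk and explicitly names the map $f\colon \mathbb{Z}_n \to \mathbb{Z}_n$, $f(x)=kx$, as the isomorphism, without writing out the verification. Your write-up fills in the routine adjacency check and the normalization of the connection set, which the paper omits, but the underlying argument is the same.
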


\noindent
As an example of Lemma \ref{lem:iso}, if
$n=7$ and $k=3$, then $C_7(1,3) \cong C_7(2,3)$. 

As noted in the introduction, the zero forcing number
(and minmal rank) of a number of families of graphs are known.  Some of these families (e.g., complete
graphs, cycles) are special cases of circulant graphs.  The next theorem summarizes some of these known results.

\begin{theorem}  Let $G = C_n(S)$ be a circulant graph.
\begin{enumerate}
    \item If $S = \{j\}$ and ${\rm gcd}(n,j) =1$,
    then $Z(G) = M(G) = 2$.
    \item If $S = \{1,\ldots,\lfloor \frac{n}{2} \rfloor\} \setminus 
    \{j\}$, ${\rm gcd}(n,j)=1$, and $n \geq 5$, then $Z(G) = M(G) = n-3$.
    \item If $S = \{1,2,\ldots, \lfloor \frac{n}{2}\rfloor \}$, then
    $Z(G) = M(G) = n-1$.
    \end{enumerate}
\end{theorem}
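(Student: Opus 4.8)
The plan is to establish each of the three parts by identifying the graph $C_n(S)$ with a well-known graph whose zero forcing number and maximum nullity are already recorded in Lemma~\ref{lem:facts}, so that essentially no new computation is needed.

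For part (1), the condition $\gcd(n,j)=1$ together with Lemma~\ref{lem:iso} (taking $k$ to be the inverse of $j$ modulo $n$) gives $C_n(j) \cong C_n(1)$, and $C_n(1)$ is just the cycle $C_n$. Since $n \geq 3$ whenever $S = \{j\}$ is a legitimate connection set (indeed $j \geq 1$ forces $n \geq 3$), Lemma~\ref{lem:facts}(3) yields $Z(C_n) = M(C_n) = 2$. For part (3), observe that when $S = \{1, 2, \ldots, \lfloor n/2 \rfloor\}$ every pair of distinct vertices is adjacent, so $C_n(S) \cong K_n$, and Lemma~\ref{lem:facts}(2) gives $Z(K_n) = M(K_n) = n-1$. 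For part (2), the point is that $C_n(S)$ with $S = \{1, \ldots, \lfloor n/2 \rfloor\} \setminus \{j\}$ is the complement of $C_n(j)$: two vertices $v_a, v_b$ are non-adjacent in $C_n(S)$ precisely when $|a - b| \equiv \pm j \pmod n$, which is exactly the adjacency relation in $C_n(j)$. By Lemma~\ref{lem:iso} again, $C_n(j) \cong C_n$, hence $C_n(S) \cong \overline{C_n}$, and for $n \geq 5$ Lemma~\ref{lem:facts}(4) gives $Z(\overline{C_n}) = M(\overline{C_n}) = n-3$.

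The only genuine subtlety is the complementation argument in part (2): one must check carefully that ``delete the connection $j$ from the complete circulant'' really corresponds to complementation, and in particular that the edge between $v_a$ and $v_b$ with $a - b \equiv n/2$ (when $n$ is even) is handled correctly, since in the circulant notation such an edge is encoded by the single value $n/2 \in \{1, \ldots, \lfloor n/2 \rfloor\}$ rather than by a $\pm$ pair. Since $\gcd(n,j)=1$ and $n \geq 5$, we have $j \neq n/2$, so this degenerate value is never the one being removed, and the complementation identity goes through cleanly. After that, it remains only to note that the isomorphism $C_n(j) \cong C_n(1)$ from Lemma~\ref{lem:iso} is compatible with complementation (an isomorphism of graphs induces an isomorphism of complements), so $\overline{C_n(j)} \cong \overline{C_n}$.

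I do not expect any real obstacle here; the proof is a short sequence of identifications, and the main care needed is bookkeeping with the index arithmetic modulo $n$ and confirming the hypotheses ($n \geq 3$ in (1), $n \geq 5$ in (2)) line up with the ranges required by Lemma~\ref{lem:facts}.
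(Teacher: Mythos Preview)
Your proposal is correct and follows essentially the same approach as the paper: identify $C_n(j)\cong C_n$, $C_n(\{1,\ldots,\lfloor n/2\rfloor\})\cong K_n$, and $C_n(\{1,\ldots,\lfloor n/2\rfloor\}\setminus\{j\})\cong\overline{C_n}$ via Lemma~\ref{lem:iso}, then invoke Lemma~\ref{lem:facts}. Your extra care with the $j\neq n/2$ check and the complement-compatibility remark is more explicit than the paper's argument but not substantively different; one small slip is that $j\geq 1$ only forces $n\geq 2$, not $n\geq 3$, though the implicit assumption $n\geq 3$ is shared by the paper.
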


\begin{proof}
(1) The circulant graph 
$C_n(1) \cong C_n$, the $n$-cycle.  By
Lemma \ref{lem:iso}, $C_n(j) \cong C_n(1)$.
The result then follows from 
Lemma \ref{lem:facts}(3).

(2) The graph  $C_n(2,\ldots, \lfloor \frac{n}{2}\rfloor )$ is the complement of $C_n(1) \cong C_n$. If follows by Lemma \ref{lem:iso},  that if 
$S = \{1,\ldots,\lfloor \frac{n}{2} \rfloor\} \setminus\{j\}$
    and ${\rm gcd}(n,j) =1$, 
then $G \cong \overline{C_n}$.  Now
apply Lemma \ref{lem:facts}(4).

(3) Under this hypothesis, 
$G \cong K_n$; the conclusion follows from Lemma \ref{lem:facts}(2).
\end{proof}

Given $n \geq 1$ and $1 \leq d \leq \lfloor \frac{n}{2}
\rfloor$, the graphs $C_n(1,2,\ldots,d)$ are known as \emph{consecutive circulant graphs} (e.g., see
Figure \ref{fig:8}).
\begin{figure}[ht]
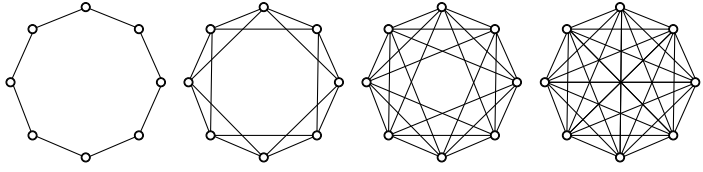

\[\Circulant{8}{1}\ \ \Circulant{8}{1,2}\ \ \Circulant{8}{1,2,3}\ \ \Circulant{8}{1,2,3,4}\]
\caption{The consecutive circulants $C_{8}(1)$, $C_{8}(1,2)$, $C_{8}(1,2,3)$, and $C_{8}(1,2,3,4)$.}
\label{fig:8}
\end{figure}
Deaett and Meyer determined the
zero forcing number and maximum nullity of
consecutive circulants.

\begin{theorem}\cite[Theorems 5.4 and 5.7]{DM}
\label{1234}
If $G =C_n(1,2,\ldots,d)$, then $M(G) = Z(G)=2d$.
\end{theorem}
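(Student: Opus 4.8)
The plan is to derive $Z(G)=2d$ from matching bounds and then obtain $M(G)=2d$ from $M(G)\le Z(G)$ (Theorem~\ref{Mbound}) together with one extra estimate, $M(G)\ge 2d$. I may assume $n\ge 2d+1$: the definition of a consecutive circulant already forces $n\ge 2d$, and when $n=2d$ the graph is $K_n$, whose parameters come from Lemma~\ref{lem:facts}(2). Since $G$ is then $2d$-regular (as $2d<n$), Theorem~\ref{thm:regular} gives $Z(G)\ge 2d$, so for the first equality it remains only to exhibit a zero forcing set of size $2d$.

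For that I would show the set $F=\{v_0,v_1,\dots,v_{2d-1}\}$ of $2d$ consecutive vertices is a zero forcing set, via the idea that a contiguous arc of filled vertices grows from both ends. Concretely, one checks that if at some stage the filled set is exactly an arc $A=\{v_a,v_{a+1},\dots,v_b\}$ (indices mod $n$) with $|A|\ge 2d$ and $A\ne V(G)$, then every neighbour of $v_{a+d-1}$ other than $v_{a-1}$ lies in $\{v_a,\dots,v_{a+2d-1}\}\subseteq A$ and so is filled, while $v_{a-1}\notin A$; hence $v_{a+d-1}$ forces $v_{a-1}$, and symmetrically $v_{b-d+1}$ forces $v_{b+1}$. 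Thus the filled set remains an arc and strictly grows; starting from $A=F$ (which has length exactly $2d$) and iterating, the arc reaches all of $V(G)$. So $Z(G)\le|F|=2d$, giving $Z(G)=2d$ and, by Theorem~\ref{Mbound}, $M(G)\le 2d$.

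It remains to prove $M(G)\ge 2d$, i.e.\ to produce $A\in\mathcal{S}(G)$ with ${\rm rank}(A)\le n-2d$. I would try a real symmetric circulant $A=c_0I+\sum_{j=1}^{d}c_j\bigl(P^{\,j}+P^{-j}\bigr)$, where $P$ is the $n\times n$ cyclic shift; its graph is $G$ exactly when $c_1,\dots,c_d$ are all nonzero (the diagonal weight $c_0$ being unconstrained), and its eigenvalues are $\lambda_k=c_0+2\sum_{j=1}^{d}c_j\cos\frac{2\pi jk}{n}=q\!\bigl(2\cos\tfrac{2\pi k}{n}\bigr)$ for $k=0,\dots,n-1$, where $q$ is a degree-$d$ polynomial whose leading coefficient is a nonzero multiple of $c_d$. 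Taking $q$ to be a nonzero scalar multiple of $\prod_{k=1}^{d}\bigl(x-2\cos\tfrac{2\pi k}{n}\bigr)$ — a polynomial with $d$ distinct roots in $(-2,2)$ since $1\le k\le d<n/2$ — makes $\lambda_k=0$ for the $2d$ distinct indices $k\in\{1,\dots,d\}\cup\{n-d,\dots,n-1\}$, so ${\rm nullity}(A)\ge 2d$. The leading coefficient of $q$ is nonzero, so $c_d\ne 0$; the delicate point is that the intermediate weights $c_1,\dots,c_{d-1}$ need not all be nonzero for this choice, so $A$ may not have \emph{exactly} the pattern of $G$. This is handled by choosing the $d$ "killed frequencies" $\{k_1,\dots,k_d\}\subseteq\{1,\dots,\floor{(n-1)/2}\}$ more carefully so that $\prod_i\bigl(x-2\cos\tfrac{2\pi k_i}{n}\bigr)$ has all required Chebyshev coefficients nonzero; for the few small $n$ where no symmetric circulant realises the pattern (the smallest being $(n,d)=(6,2)$, where $C_6(1,2)$ is the octahedron $K_{2,2,2}$), one instead shows ${\rm mr}(G)\le n-2d$ directly by an orthogonal representation of $G$ in $\R^{\,n-2d}$, i.e.\ a Gram matrix of $n$ vectors whose pattern of pairwise orthogonality matches the non-edges of $G$.

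Putting the pieces together, $2d\le M(G)\le Z(G)=2d$, so $M(G)=Z(G)=2d$. The main obstacle is the last estimate $M(G)\ge 2d$ — more precisely, certifying that the low-rank matrix produced has exactly the zero/nonzero pattern of $G$ (all $d$ off-diagonal bands nonzero). The circulant eigenvalue calculation is clean and conceptual but degenerates for some small $n$, and identifying exactly those $n$ and dispatching them (via an explicit orthogonal representation) is where most of the effort goes.
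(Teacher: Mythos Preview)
The paper does not prove this theorem; it is simply quoted from Deaett and Meyer \cite{DM} and used as a black box. So there is no ``paper's own proof'' to compare against, and your proposal should be judged on its own merits.

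Your argument for $Z(G)=2d$ is complete and correct: the regularity lower bound via Theorem~\ref{thm:regular} together with the ``growing arc'' forcing scheme from $F=\{v_0,\dots,v_{2d-1}\}$ is exactly the natural proof, and your check that $v_{a+d-1}$ has $v_{a-1}$ as its unique unfilled neighbour is accurate.

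The gap is entirely in $M(G)\ge 2d$, and you already identify it honestly. The circulant-eigenvalue idea is the right instinct, and your diagnosis of the failure at $(n,d)=(6,2)$ is correct (there the forced polynomial is even, so $c_1=0$). However, the two proposed fixes are not yet arguments: ``choose the killed frequencies more carefully'' does not come with a proof that a good choice always exists, and ``use an orthogonal representation in $\mathbb{R}^{\,n-2d}$ for the bad cases'' presupposes you can enumerate the bad $(n,d)$, which you have not done. As written this is a plausible plan, not a proof. If you want to close the gap along these lines, one clean route is to show that for generic real $t_1,\dots,t_d\in(-1,1)$ the polynomial $\prod_i(x-t_i)$ has all Chebyshev coefficients nonzero, and that for $n>2d$ you can always realise such a generic tuple by frequencies $\cos(2\pi k_i/n)$ with distinct $k_i\in\{1,\dots,\lfloor(n-1)/2\rfloor\}$; alternatively, abandon circulants and build a rank-$(n-2d)$ Gram matrix directly for all $n$, which is closer in spirit to what \cite{DM} actually does.
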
 

By combining the above result with Lemma \ref{lem:iso}, one can determine  $Z(G)$ and $M(G)$ for some other families of circulant graphs:

\begin{corollary}
Suppose $G = C_n(s,2s,3s,\ldots,ts)$ for $1 < ts < \frac{n}{2}$. 
\begin{enumerate}
\item If ${\rm gcd}(n,s)=s$, then $M(G) = Z(G)= 2st$.
\item If ${\rm gcd}(n,s)=1$, then $M(G) =Z(G) = 2t$.
\end{enumerate}
\end{corollary}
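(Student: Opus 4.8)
The plan is to reduce each case to Theorem~\ref{1234} via the isomorphism machinery already assembled. The key observation is that the connection set $\{s, 2s, \ldots, ts\}$ is an arithmetic progression with common difference $s$, so scaling should turn it into the consecutive set $\{1, 2, \ldots, t\}$ (up to disconnection issues).

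\emph{Case (1):} Suppose $\gcd(n,s) = s$, i.e.\ $s \mid n$. Then by Theorem~\ref{thm:discon} (with $g = s$), we have
\[
C_n(s, 2s, \ldots, ts) \cong s\, C_{n/s}(1, 2, \ldots, t).
\]
The condition $1 < ts < \tfrac{n}{2}$ guarantees $t < \tfrac{n}{2s} = \lfloor \tfrac{n/s}{2}\rfloor$ or $t = \lfloor \tfrac{n/s}{2}\rfloor$ (need to check the boundary, but in any case $t \le \lfloor \tfrac{n/s}{2}\rfloor$), so $C_{n/s}(1,2,\ldots,t)$ is a genuine consecutive circulant and Theorem~\ref{1234} applies, giving $M\bigl(C_{n/s}(1,\ldots,t)\bigr) = Z\bigl(C_{n/s}(1,\ldots,t)\bigr) = 2t$. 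Now invoke Lemma~\ref{lem:facts}(\ref{union}) for the zero forcing number of a disjoint union, $Z(sH) = sZ(H)$, and the analogous (well-known, additive) fact that $M(sH) = sM(H)$, to conclude $M(G) = Z(G) = 2st$.

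\emph{Case (2):} Suppose $\gcd(n,s) = 1$. Pick $k$ with $ks \equiv 1 \pmod n$; then $\gcd(k,n) = 1$, so Lemma~\ref{lem:iso} gives
\[
C_n(s, 2s, \ldots, ts) \cong C_n(ks, 2ks, \ldots, tks) = C_n(1, 2, \ldots, t),
\]
where the last equality uses that $jks \equiv j \pmod n$ for each $j \le t$ and that $t < \tfrac{n}{2}$ (from $ts < \tfrac{n}{2}$ and $s \ge 1$) so these residues are exactly $1, \ldots, t$ in the standard range and no reduction past $\lfloor \tfrac n2\rfloor$ occurs. Then Theorem~\ref{1234} directly yields $M(G) = Z(G) = 2t$.

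The only genuinely delicate points are bookkeeping ones: verifying that in Case~(1) the reduced parameter $t$ really does satisfy $t \le \lfloor \tfrac{n/s}{2}\rfloor$ so that Theorem~\ref{1234} is applicable as stated (this follows from $ts < \tfrac n2$, hence $t < \tfrac{n}{2s}$, so $t \le \lceil \tfrac{n}{2s}\rceil - 1 \le \lfloor \tfrac{n/s}{2} \rfloor$ when $s \mid n$), and in Case~(2) confirming that multiplying the connection set by $k$ and reducing mod $n$ lands back in $\{1,\ldots,\lfloor \tfrac n2\rfloor\}$ without any element wrapping around — again guaranteed by $ts < \tfrac n2$. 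I would also state explicitly the disjoint-union identity $M(G \cup H) = M(G) + M(H)$ (block-diagonal matrices realize it), since the excerpt records the corresponding fact for $Z$ but not for $M$; citing \cite{AIM} for both is appropriate. No step presents a real obstacle; this is a corollary in the honest sense.
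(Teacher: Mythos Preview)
Your proof is correct and follows essentially the same route as the paper: in Case~(1) you decompose $G$ into $s$ copies of a consecutive circulant via Theorem~\ref{thm:discon} and apply Theorem~\ref{1234} together with the disjoint-union additivity, and in Case~(2) you invoke Lemma~\ref{lem:iso} (implicitly using $k=s^{-1}\pmod n$) to reduce to a consecutive circulant. The extra bookkeeping you supply (parameter-range checks and the explicit $M(G\cup H)=M(G)+M(H)$) is a helpful addition that the paper leaves implicit.
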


\begin{proof}
(1)  Suppose $\gcd(n,s)=s\neq 1$. By Theorem~\ref{thm:discon}, since ${\rm gcd}(n,s,2s,\ldots,ts) \neq 1$, the graph $G$ is disconnected. In particular, $G \cong sH$ with $H =C_{\frac{n}{s}}(1,2,3,\ldots,t)$.
 By Theorem~\ref{1234} $M(H) = Z(H)= 2t$, so the
 result follows from Lemma \ref{lem:facts}(5).

(2) By Lemma \ref{lem:iso}, 
$C_n(s,2s,\ldots,ts) \cong C_n(1,2,\ldots,t)$.  
Now apply Theorem \ref{1234}.
\end{proof}

%%%%%%%%%%%%%%%%%%%%%%%%%%%%%%%%%%%%%%%%%%%%%%%%%%%%%%%%%%%%%%%%%%%%%%%%%%%%%%
\section{\textbf{Families of bipartite circulants}\label{bipartite}} 

In this section, we determine the
zero forcing number and minimal rank of 
some families of bipartite circulant graphs using
the work of Meyer \cite{M}.  Note that
Meyer investigates the family of bipartite graphs whose biadjacency
matrix is a circulant matrix.  These graphs are sometimes called {\it generalized bipartite circulants,}  although
in \cite{M}, for expediency, they are 
simply called bipartite circulants.  This usage
is different than our usage of the term bipartite
circulant graph.  More precisely, a {\it bipartite circulant} (as used in
this paper) is a circulant graph which is bipartite.  In particular, the family of bipartite circulants is a subclass of the generalized bipartite circulants,
the family of graphs studied in \cite{M}.  The reader should
be aware of the two usages when consulting \cite{M}.

Our starting point is the following characterization of
bipartite graphs due to Heuberger.

\begin{theorem}\cite[Theorem 1]{Heu}
\label{bicirc}
Let $G = C_{n}(s_1,\ldots,s_t)$ be a connected circulant. Then $G$ is bipartite if and only if $n$ is even and $s_1,\ldots,s_t$ are odd.
\end{theorem}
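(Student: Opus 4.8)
The plan is to prove both directions of the biconditional by a standard parity argument on the cycle structure of $C_n(s_1,\ldots,s_t)$. For the forward direction, suppose $G$ is bipartite with bipartition $(A,B)$; I will extract from this constraints forcing $n$ even and all $s_i$ odd. For the reverse direction, assuming $n$ even and every $s_i$ odd, I will exhibit an explicit $2$-coloring of the vertices and check that no edge is monochromatic.

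First I would handle the easy reverse direction: color vertex $v_i$ with the parity of $i$, i.e., put $v_i \in A$ if $i$ is even and $v_i \in B$ if $i$ is odd. Since $n$ is even this coloring is well-defined modulo $n$ (the index $0$ and the index $n$ receive the same color). An edge of $G$ joins $v_i$ to $v_{i+s_j}$ for some $s_j \in S$; since each $s_j$ is odd, $i$ and $i+s_j$ have opposite parities, so the edge is properly bichromatic. Hence $G$ is bipartite.

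For the forward direction, assume $G$ is connected and bipartite. Since $G$ is connected, Theorem~\ref{thm:discon} gives $\gcd(n,s_1,\ldots,s_t)=1$, which I expect to use to rule out the degenerate possibilities. The key observations are: (i) the edge from $v_0$ to $v_{s_1}$ together with the path $v_0, v_1, v_2, \ldots, v_{s_1}$ built from consecutive $\pm 1$ steps — wait, that requires $1 \in S$, which need not hold; instead I would argue directly. Consider any $s_j \in S$. The vertices $v_0, v_{s_j}, v_{2s_j}, \ldots$ form a closed walk of length $n/\gcd(n,s_j)$ (this is the cycle $C_{n/\gcd(n,s_j)}(1)$ sitting inside $G$, using only the "step $s_j$" edges), so in a bipartite graph this length must be even; thus $n/\gcd(n,s_j)$ is even for every $j$. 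Next, for any two $s_i, s_j \in S$, the closed walk $v_0 \to v_{s_i} \to v_{s_i - s_j} \to v_{-s_j} \to v_0$ — more carefully, a closed walk using $a$ steps of $+s_i$ and $b$ steps of $-s_j$ returning to the origin requires $a s_i \equiv b s_j \pmod n$, and bipartiteness forces $a+b$ even whenever such a return occurs. Choosing the walk that uses $n/\gcd(n,s_i)$ steps of $s_i$ (returns to $0$, even length) and combining with the analogous facts, one deduces that all the $s_j$ have the same parity, and then that this common parity together with $n/\gcd(n,s_j)$ even and $\gcd(n,s_1,\ldots,s_t)=1$ forces $n$ even and each $s_j$ odd.

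The step I expect to be the main obstacle is organizing the walk-parity bookkeeping in the forward direction cleanly. The cleanest route is probably to observe that a graph is bipartite iff every cycle has even length, and then to produce, for each fixed $s_j$, an odd cycle whenever $s_j$ is even or $n/\gcd(n,s_j)$ is odd: for instance if some $s_i$ is even and some $s_k$ is odd, one builds a short odd closed walk mixing a single $s_i$-step, a single $s_k$-step, and a return path, then extracts an odd cycle from it. Converting "odd closed walk" to "odd cycle" is routine (an odd closed walk always contains an odd cycle), but one must be careful that the closed walk is genuinely realizable as an edge-walk in $C_n(S)$ and that the parity of its length is odd. Making the case analysis exhaustive — even $s_j$'s, the interaction of different $s_j$'s, and the role of $\gcd(n,s_1,\ldots,s_t)=1$ — is where the care is needed; I would streamline it by first showing all $s_j$ share a common parity $\varepsilon$, then showing $\varepsilon$ is odd (else $\gcd$ with $n$ is $\geq 2$ unless all $s_j$ are... in which case connectivity fails unless $n$ odd, but then the $s_j$-cycle has odd length $n$, contradicting bipartiteness), and finally concluding $n$ even since a vertex $v_0$ and $v_1,\ldots$ — rather, since with all $s_j$ odd the standard parity coloring must close up, forcing $n$ even.
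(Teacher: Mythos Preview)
The paper does not supply its own proof of this statement: Theorem~\ref{bicirc} is quoted from Heuberger \cite{Heu} and is used as a black box, so there is nothing in the paper to compare your argument against line by line.

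On the merits of your proposal itself: the reverse direction is correct and already in final form. The forward direction contains the right ingredients, but what you have written is a sequence of exploratory attempts rather than a proof, and in places the logic is not quite in the right order. The cleanest organization of exactly your ideas is as follows. Assume $G$ is connected and bipartite. First, for any $s_i,s_j\in S$, the closed walk consisting of $s_j$ steps of $+s_i$ followed by $s_i$ steps of $-s_j$ has length $s_i+s_j$; bipartiteness forces this to be even, so all the $s_j$ have a common parity. Second, this common parity must be odd: if all $s_j$ were even then either $n$ is even and $2\mid\gcd(n,s_1,\ldots,s_t)$, contradicting connectedness via Theorem~\ref{thm:discon}, or $n$ is odd and the single-generator cycle $v_0,v_{s_1},v_{2s_1},\ldots$ has odd length $n/\gcd(n,s_1)$ (an odd number divided by an odd number), contradicting bipartiteness. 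Third, with $s_1$ odd, the cycle on the $s_1$-steps has even length $n/\gcd(n,s_1)$; since $\gcd(n,s_1)$ is odd this forces $n$ even. That is the whole argument.

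Two small things to tighten in your write-up: your phrase ``the $s_j$-cycle has odd length $n$'' should read ``odd length $n/\gcd(n,s_j)$''; and you should note once, explicitly, that an odd closed walk in a simple graph contains an odd cycle, which is the fact that lets you pass from ``odd closed walk'' to ``not bipartite'' without further work.
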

Note that partitioning the vertices of a bipartite circulant into parts based on the parity of their index will provide a bipartition of the vertex set. Two bipartite circulant
graphs are given in Figure \ref{8(13)}.
\begin{figure}[ht]
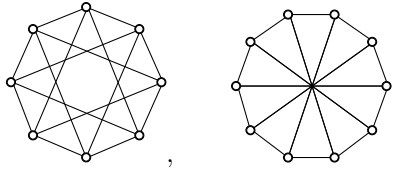

	\[\Circulant{8}{1,3},\qquad \Circulant{10}{1,5}\]
	\caption{The bipartite circulant graphs $C_{8}(1,3) \cong K_{4,4}$ and $C_{10}(1,5)$}
	\label{8(13)}
\end{figure}

Following Meyers \cite{M}, we can represent a bipartite
circulant graph using its biadjacency matrix.  Recall
that if $G$ is a biparitite graph with bipartition 
$V_1 \cup V_2$ and $m =|V_1|$ and $n = |V_2|$, then we can
represent $G$ by the $m \times n$ matrix $A$ where
$A_{ij}$ is $1$ if there is an edge between vertex
$v_i \in V_1$ and vertex $v_j \in V_2$, and 0 otherwise.  The matrix
$A$ is the {\it biadjacency matrix} of $G$. 

The next lemma describes how to represent
the biadjacency matrix of a bipartite circulant
graph.   Below, $P$ denotes the $n \times n$ permutation matrix corresponding
to the $n$ cycle
$(123\cdots n)$.  Note that $P^n = P^0 = I_n$ and
$P^a = P^b$ if $a \equiv b \pmod{n}$.

\begin{lemma} \label{biadj-form}
Let $G= C_{2n}(s_1,\ldots,s_t)$ be a bipartite circulant graph.
\begin{enumerate}
    \item[(1)]  If $s_t \neq n$, then the biadjacency
    matrix of $G$ is
    \[P^{\frac{s_1-1}{2}} + P^{\frac{s_2-1}{2}} +
    \cdots + P^{\frac{s_t-1}{2}} + P^{n-\frac{s_t+1}{2}} +
    \cdots +
    P^{n-\frac{s_2+1}{2}} + P^{n-\frac{s_1+1}{2}}. \]
    \item[(2)]  If $s_t = n$, then the biadjacency
    matrix of $G$ is
    \[P^{\frac{s_1-1}{2}} + P^{\frac{s_2-1}{2}} +
    \cdots + P^{\frac{s_t-1}{2} = n-\frac{s_t+1}{2}} +
    \cdots +
    P^{n-\frac{s_2+1}{2}} + P^{n-\frac{s_1+1}{2}}. \]
\end{enumerate}
\end{lemma}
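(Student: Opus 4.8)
The plan is to read off the biadjacency matrix directly from the neighbour structure of $C_{2n}(s_1,\ldots,s_t)$, using the bipartition by parity of index noted just after Theorem~\ref{bicirc}. Since $G$ is bipartite, Theorem~\ref{bicirc} tells us every $s_\ell$ is odd, so $\frac{s_\ell-1}{2}$ and $\frac{s_\ell+1}{2}$ are integers. Put $V_1=\{u_0,\dots,u_{n-1}\}$ with $u_i=v_{2i}$ and $V_2=\{w_0,\dots,w_{n-1}\}$ with $w_j=v_{2j+1}$, and let $A$ be the $n\times n$ biadjacency matrix for this ordering, so $A_{ij}=1$ exactly when $u_i$ is adjacent to $w_j$.

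First I would list the neighbours of a typical vertex $u_i=v_{2i}$: for each $s\in S$ these are $v_{2i+s}$ and $v_{2i-s}$, both of odd index since $s$ is odd. Rewriting each index in the form $2j+1$ and reducing modulo $2n$ gives $v_{2i+s}=w_{i+\frac{s-1}{2}}$ and $v_{2i-s}=w_{i-\frac{s+1}{2}}$, with subscripts taken modulo $n$. Hence $A_{ij}=1$ if and only if $j-i\equiv \frac{s_\ell-1}{2}$ or $j-i\equiv n-\frac{s_\ell+1}{2}\pmod n$ for some $\ell$. Taking $P^k$ to be the shift matrix with a $1$ in entry $(a,b)$ precisely when $b\equiv a+k\pmod n$ (so $P^n=I_n$), it follows that, as soon as the relevant exponents are pairwise distinct so that the sum is again a $0$--$1$ matrix, $A=\sum_{k\in T}P^k$ where $T=\{\frac{s_\ell-1}{2}:1\le\ell\le t\}\cup\{n-\frac{s_\ell+1}{2}:1\le\ell\le t\}$; listing the ``$+s$'' exponents in increasing order of $\ell$ followed by the ``$-s$'' exponents in decreasing order of $\ell$ then matches the displays in (1) and (2).

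It remains to check distinctness, and this is where the case split enters. Each of $s\mapsto\frac{s-1}{2}$ and $s\mapsto n-\frac{s+1}{2}$ is injective (its values lie in $\{0,\dots,n-1\}$ since $1\le s_\ell\le n$), so the only way two listed exponents can coincide modulo $n$ is $\frac{s_i-1}{2}\equiv n-\frac{s_j+1}{2}\pmod n$, that is $s_i+s_j=2n$, which forces $s_i=s_j=n$ and hence $i=j=t$. In case (1), $s_t\neq n$, so no collision occurs, the $2t$ exponents are distinct, and $A$ is precisely the matrix in (1). In case (2), $s_t=n$ (which makes $n$ odd, since $s_t$ is odd), and the two exponents produced by $s_t$ now agree: $\frac{s_t-1}{2}=\frac{n-1}{2}=n-\frac{s_t+1}{2}$, reflecting that $v_{2i+n}=v_{2i-n}$ so $s_t$ contributes only the single neighbour $w_{i+\frac{n-1}{2}}$; by the same criterion no other collision occurs, leaving the $2t-1$ distinct terms displayed in (2).

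The main obstacle is not conceptual but careful bookkeeping: moving between the cyclic labelling on $2n$ vertices and the two parts of size $n$, keeping the reductions modulo $2n$ and modulo $n$ straight, fixing the convention under which $P^k$ realizes a shift by $k$, and--above all--verifying the distinctness of exponents, since that is exactly what makes the sum of permutation matrices a genuine biadjacency matrix and forces precisely two terms to merge exactly when $s_t=n$.
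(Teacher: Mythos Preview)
Your argument is correct and follows essentially the same route as the paper: identify the two exponents $\frac{s-1}{2}$ and $-\frac{s+1}{2}\equiv n-\frac{s+1}{2}$ that each $s\in S$ contributes to the biadjacency matrix, and then observe that the only coincidence among these exponents occurs when $s_t=n$. The paper simply cites Meyer's \cite[Theorem~2.2]{M} for the first step, whereas you carry out the neighbour computation and the distinctness check in full; your version is therefore more self-contained but not a different method.
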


\begin{proof}
This result is implicit in the proof of \cite[Theorem 2.2]{M}.
In particular, it is shown that if $s \in S$, then $s$ contributes
the matrices 
$P^{\frac{s-1}{2}}$ and $P^{-\frac{s+1}{2}}$ to the biadjacency matrix
of $C_{2n}(S)$ (there is a typo in \cite{M} where the author has an $n$ instead of an $s$).  Note that $P^{-\frac{s+1}{2}} = P^{n-\frac{s+1}{2}}$.
The result now follows by noting that if $s_t \neq n$, then
each $s_i \in S$ gives two distinct matrices, but when $s_t = n$,
the two matrices $P^{\frac{s_t-1}{2}}$ and $P^{-\frac{s_t+1}{2}}
= P^{n-\frac{s_t+1}{2}}$ are the same matrix.
\end{proof}

\begin{remark}
Note that in Lemma \ref{biadj-form}, the exponents of
the the matrices $P$ satisfy
\[\frac{s_1-1}{2} < \frac{s_2 -1}{2} < \cdots <
\frac{s_t-1}{2} \leq n-\frac{s_t+1}{2} < 
n - \frac{s_{t-1}+1}{2} < \cdots < n - \frac{s_1+1}{2}.\]
\end{remark}
We recall two further results from Meyer's paper \cite{M};  we 
have specialized his results to bipartite circulant graphs
of the form $C_{2n}(S)$.

\begin{lemma}\cite[Theorem 2.4]{M}
\label{bipartite-isom}
Suppose that $G = C_{2n}(S)$ is a connected bipartite circulant
graph with biadjacency matrix $P^{i_1} + \cdots + P^{i_r}$.
Then for each unit $a \in \mathbb{Z}/n\mathbb{Z}$ and
element $b \in \mathbb{Z}/n\mathbb{Z}$, the graph $G$
is isomorphic to the graph with biadjacency matrix
$P^{ai_1+b} + \cdots + P^{ai_r+b}$  where the exponents
are computed modulo $n$.
\end{lemma}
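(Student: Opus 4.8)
\textbf{Proof proposal for Lemma~\ref{bipartite-isom}.}

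The plan is to reduce this to the graph isomorphism machinery already available for circulant graphs rather than re-deriving Meyer's automorphism argument from scratch. First I would record the dictionary between a connected bipartite circulant $G = C_{2n}(S)$ and its biadjacency matrix: by Lemma~\ref{biadj-form}, the exponent multiset $I = \{i_1,\ldots,i_r\}$ of $P$ is exactly $\{\frac{s-1}{2} : s\in S\} \cup \{n - \frac{s+1}{2} : s\in S\}$ (with a collapse when $s_t = n$), viewed in $\mathbb{Z}/n\mathbb{Z}$. The key structural observation I would make explicit is that this multiset is \emph{symmetric} under $x \mapsto -1 - x \pmod n$: indeed $\frac{s-1}{2}$ and $-\frac{s+1}{2} = -1-\frac{s-1}{2}$ are interchanged. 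So the data "connected bipartite circulant on $2n$ vertices" is equivalent to the data "multiset $I \subseteq \mathbb{Z}/n\mathbb{Z}$ closed under $x\mapsto -1-x$, with the associated $C_{2n}(S)$ connected."

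Next I would show that the two transformations in the statement each send such a symmetric multiset to another symmetric multiset, and that the resulting biadjacency matrix is again that of a bipartite circulant. For the translation $b$: replacing $I$ by $I+b$ does \emph{not} preserve the symmetry $x\mapsto -1-x$ in general, so this is where the geometric picture is needed — a translation of the biadjacency exponents corresponds to relabelling the vertices on one side of the bipartition by a fixed cyclic shift, which is a graph automorphism of $G$ (it matches edges bijectively). Concretely, if $v_0,\ldots,v_{n-1}$ and $w_0,\ldots,w_{n-1}$ are the two colour classes and $v_j \sim w_{j+i}$ for $i\in I$, then sending $v_j \mapsto v_j$, $w_k \mapsto w_{k+b}$ gives an isomorphism onto the graph with exponent set $I+b$. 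For the unit multiplier $a$: the map $f(x) = ax$ on $\mathbb{Z}/n\mathbb{Z}$ is a bijection (since $a$ is a unit), and applying it to both colour classes simultaneously — $v_j \mapsto v_{aj}$, $w_k \mapsto w_{ak}$ — sends the edge $v_j\sim w_{j+i}$ to $v_{aj}\sim w_{aj+ai}$, i.e. to an edge of the graph with exponent set $aI$. Composing the two gives the isomorphism onto $P^{ai_1+b}+\cdots+P^{ai_r+b}$ claimed.

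I would then note that the resulting graph is still a bipartite circulant of the same form: since $I$ was symmetric under $x\mapsto -1-x$, one can ask whether $aI+b$ is symmetric under some $x\mapsto c - x$, and indeed it is, with $c = b - a + 2ab'$-type adjustment — but cleaner is simply to invoke that an isomorphic copy of a bipartite circulant whose biadjacency matrix is a sum of powers of $P$ is itself realized by the recipe of Lemma~\ref{biadj-form} applied to the appropriate $S$, so no separate check of "is a circulant" is needed; the statement only claims an isomorphism of graphs, not that the recipe lands on the same $S$. The main obstacle is the translation case: one must be careful that shifting only one colour class (not both) is what corresponds to $I \mapsto I+b$, and that this is genuinely a well-defined graph automorphism — the bipartite structure is exactly what makes an asymmetric relabelling legitimate here, whereas for a general (non-bipartite) circulant only the simultaneous relabelling (the $f(x)=ax$ map of Lemma~\ref{lem:iso}) is available. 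Since this is stated as a specialization of \cite[Theorem~2.4]{M}, I would in the write-up either cite that theorem directly or, if a self-contained argument is wanted, present the two explicit vertex bijections above and verify adjacency is preserved, which is a short routine check.
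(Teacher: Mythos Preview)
The paper does not prove this lemma at all: it is simply quoted as \cite[Theorem~2.4]{M} and used as a black box. So there is no ``paper's own proof'' to compare against; your write-up would be strictly additional content.

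That said, your argument is correct in substance. The two explicit vertex bijections you give --- shifting one colour class by $b$ for the translation, and scaling both colour classes by the unit $a$ for the multiplication --- are exactly the right maps, and the adjacency check you describe goes through. A couple of remarks on presentation. First, the digression on the symmetry $x\mapsto -1-x$ of the exponent multiset $I$ is not needed for the isomorphism claim: the bijections work for an arbitrary $I\subseteq\mathbb{Z}/n\mathbb{Z}$, and the lemma as stated only asserts an isomorphism of graphs, not that the target is again of the form $C_{2n}(S')$. You recognise this at the end, so you could simply delete that paragraph. Second, your ``main obstacle'' paragraph is well taken: the point that the one-sided shift $w_k\mapsto w_{k+b}$ is only available because the graph is bipartite is the genuine content here, and is worth keeping. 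If a self-contained proof is wanted in the paper, the two bijections plus the one-line adjacency verification would suffice; otherwise, citing \cite[Theorem~2.4]{M} as the paper does is already adequate.
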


\begin{theorem}\cite[Corollary 3.5]{M}
\label{bipartite-sequential}
Suppose that $G=C_{2n}(S)$ is a connected bipartite
circulant graph with biadjacency matrix $P^0+P^1 + P^2
+ \cdots + P^t$.  Then $M(G) = Z(G) = 2t$.
\end{theorem}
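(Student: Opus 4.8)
The plan is to squeeze $2t$ between $Z(G)$ and $M(G)$: since $M(G)\le Z(G)$ by Theorem~\ref{Mbound}, it is enough to prove $Z(G)\le 2t$ and $M(G)\ge 2t$. Write the two colour classes of $G$ as $V_1=\{u_0,\dots,u_{n-1}\}$ and $V_2=\{w_0,\dots,w_{n-1}\}$; after possibly relabelling, the hypothesis that the biadjacency matrix is $P^0+P^1+\cdots+P^t$ says precisely that $u_i$ is adjacent to $w_i,w_{i+1},\dots,w_{i+t}$, equivalently that $w_j$ is adjacent to $u_{j-t},\dots,u_{j-1},u_j$, all subscripts taken modulo $n$. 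We may assume $1\le t\le n-1$ (for $n\ge 2$ the lower bound is forced by connectedness, and $P^0,\dots,P^t$ being distinct forces the upper bound).

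For $Z(G)\le 2t$ I would verify that $F=\{u_0,\dots,u_{t-1}\}\cup\{w_0,\dots,w_{t-1}\}$, which has $2t$ vertices, is a zero forcing set, by a ``zipper'' argument around the cycle of indices. Since $w_0,\dots,w_{t-1}$ are filled, $w_t$ is the only unfilled neighbour of $u_0$, so $u_0$ forces $w_t$; since $u_0,\dots,u_{t-1}$ are filled, $u_t$ is then the only unfilled neighbour of $w_t$, so $w_t$ forces $u_t$. In general, once $u_0,\dots,u_{t+j-1}$ and $w_0,\dots,w_{t+j-1}$ are filled (the case $j=0$ being the initial set $F$), the neighbours $w_j,\dots,w_{j+t}$ of $u_j$ are all filled except $w_{t+j}$, so $u_j$ forces $w_{t+j}$, and then $w_{t+j}$, whose neighbours are $u_j,\dots,u_{t+j}$, forces $u_{t+j}$. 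Letting $j$ run from $0$ to $n-1-t$ fills all $2n$ vertices; note that no index wraps around, since $j+t\le n-1$ throughout. Hence $Z(G)\le|F|=2t$.

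The direction $M(G)\ge 2t$ is where the real work lies, and is the step I expect to be the main obstacle. The obvious candidate --- the symmetric matrix with zero diagonal blocks and off-diagonal block the $\{0,1\}$ biadjacency matrix $B=P^0+\cdots+P^t$ --- does not suffice, because $\operatorname{rank}B=n-\gcd(n,t+1)+1$ is in general larger than $n-t$, so its nullity is less than $2t$. The fix is to replace $B$ by a matrix with the \emph{same zero--nonzero pattern} but rank only $n-t$. I would choose distinct real numbers $\alpha_0,\dots,\alpha_{n-1}$ and set
\[
M_{ij}=\prod_{k=1}^{\,n-t-1}\bigl(\alpha_i-\alpha_{j+k}\bigr),\qquad i,j\in\{0,\dots,n-1\}\ \ (\text{subscripts mod }n).
\]
Since the $\alpha_i$ are distinct, $M_{ij}=0$ exactly when $i\equiv j+k\pmod n$ for some $k\in\{1,\dots,n-t-1\}$, that is, exactly when $j-i\in\{t+1,\dots,n-1\}\pmod n$; thus $M$ has precisely the support of $B$, so the symmetric matrix $A$ with zero diagonal blocks and off-diagonal block $M$ belongs to $\mathcal{S}(G)$. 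On the other hand $M_{ij}=q_j(\alpha_i)$ with $q_j(x)=\prod_{k=1}^{n-t-1}(x-\alpha_{j+k})$ a polynomial of degree $n-t-1$, so $M$ factors as the $n\times(n-t)$ Vandermonde matrix $[\alpha_i^{\ell}]$ times the $(n-t)\times n$ coefficient matrix of the $q_j$, giving $\operatorname{rank}M\le n-t$. Therefore $\operatorname{nullity}A=2n-2\operatorname{rank}M\ge 2t$, so $M(G)\ge 2t$. Together with $Z(G)\le 2t$ and $M(G)\le Z(G)$ this yields $M(G)=Z(G)=2t$.
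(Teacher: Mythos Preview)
The paper does not give its own proof of this statement: it is quoted as \cite[Corollary~3.5]{M} and used as a black box in the proofs of Theorems~\ref{bipartiteresult1} and~\ref{biparitteresult2}. So there is no paper-proof to compare against here; you have supplied a self-contained argument where the authors simply cite Meyer.

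Your proof is correct. The zipper forcing argument for $Z(G)\le 2t$ is the natural one for this consecutive structure and goes through exactly as you describe (the check that no index wraps, since $j+t\le n-1$ throughout, is the only place one could slip, and you handle it). For $M(G)\ge 2t$, your Vandermonde-type construction is clean and works: the entries $M_{ij}=\prod_{k=1}^{n-t-1}(\alpha_i-\alpha_{j+k})$ indeed vanish precisely when $j-i\in\{t+1,\dots,n-1\}\pmod n$, so $M$ has the support of the biadjacency matrix, while the factorisation through the $n\times(n-t)$ Vandermonde matrix bounds the rank by $n-t$. The only step you leave implicit is that the block matrix $A=\left(\begin{smallmatrix}0&M\\ M^{T}&0\end{smallmatrix}\right)$ has rank exactly $2\operatorname{rank}M$, which is immediate since the two block-rows have disjoint column supports. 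This polynomial-evaluation trick for producing low-rank matrices with a prescribed circulant-type zero pattern is essentially the same device that underlies the minimum-rank bounds in Deaett--Meyer~\cite{DM} and in Meyer's own paper, so while you have written out a proof the authors omit, the method is in the same spirit as the cited source.
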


We now come to the main results of this section.

\begin{theorem} \label{bipartiteresult1}
Fix $n,\ell \in \mathbb{N}$ with
$\ell \geq 1$ and $n \geq 2\ell+2$.
\begin{enumerate}
    \item[(1)] If $n$ is odd and 
    $G  =C_{2n}(n-2\ell,n-2\ell+2,\ldots,n-2,n)$, then
    $M(G) = Z(G) = 4\ell$.
    
    \item[(2)] If $n$ is even and $G = C_{2n}(n-2\ell-1,
    n-2\ell+1,\ldots,n-3,n-1)$, then $M(G) = Z(G) = 4\ell+2$.
\end{enumerate}
\end{theorem}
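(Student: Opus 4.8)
The plan is to reduce both cases to Theorem~\ref{bipartite-sequential} by exhibiting a sequence of operations — first translating to biadjacency form via Lemma~\ref{biadj-form}, then applying the isomorphism of Lemma~\ref{bipartite-isom} — that brings the biadjacency matrix into the consecutive-power form $P^0 + P^1 + \cdots + P^{t}$. First I would write down, for each case, the modulus of the biadjacency representation: in case (1) the generators are odd (they are $n-2\ell, n-2\ell+2, \ldots, n$ with $n$ odd, so each is odd), $s_t = n$, and by Lemma~\ref{biadj-form}(2) the biadjacency matrix lives in the $n \times n$ setting, i.e. exponents are read modulo $n$; in case (2) the generators $n-2\ell-1, \ldots, n-1$ are again all odd (since $n$ is even), $s_t = n - 1 \neq n$, so Lemma~\ref{biadj-form}(1) applies and the biadjacency matrix is $2(2\ell+1)$ powers of $P$ modulo $n$. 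I would then record that these graphs are connected (since $\gcd$ of the generators with $2n$ is $1$ — the generators include consecutive odd numbers, so their gcd is $1$), so that Lemma~\ref{bipartite-isom} and Theorem~\ref{bipartite-sequential} are applicable.

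Next I would compute the exponent set explicitly. In case (1), Lemma~\ref{biadj-form}(2) gives exponents $\frac{s_i - 1}{2}$ for $i = 1, \ldots, t$ together with $n - \frac{s_i+1}{2}$ for $i = 1, \ldots, t-1$ (the $i=t$ term coincides). Plugging $s_i = n - 2\ell + 2(i-1)$ and simplifying mod $n$, the exponents $\frac{s_i-1}{2}$ become an arithmetic progression with common difference $1$ of length $t = \ell+1$, and the exponents $n - \frac{s_i+1}{2}$ become another progression with common difference $1$ of length $\ell$; checking the endpoints, the two progressions fit together (using $s_t = n$ to glue them) into a single block of $2\ell + 1$ consecutive residues mod $n$ — wait, I should double-check the count: $4\ell$ is the claimed answer, which is twice $2\ell$, so after translation I expect the biadjacency matrix to be $P^0 + P^1 + \cdots + P^{2\ell}$, i.e. $t = 2\ell$ in the notation of Theorem~\ref{bipartite-sequential}, giving $M(G) = Z(G) = 4\ell$. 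So the merged block should have length $2\ell + 1$. I would verify this by carefully listing the $2t - 1 = 2\ell + 1$ exponents and confirming they are $2\ell + 1$ consecutive integers mod $n$ (possibly after applying $a = -1$ or a shift $b$). Case (2) is analogous: Lemma~\ref{biadj-form}(1) gives $2t = 2(2\ell+1) = 4\ell + 2$ exponents, and I would check they form $4\ell + 2$ consecutive residues mod $n$, so after a translation by $b$ the biadjacency matrix is $P^0 + \cdots + P^{4\ell+1}$ and Theorem~\ref{bipartite-sequential} yields $M(G) = Z(G) = 2(4\ell+1)$ — hmm, that is $8\ell + 2$, not $4\ell + 2$, so I have miscounted and the consecutive block must actually have length $2\ell + 1$ here too, meaning the $4\ell+2$ listed exponents must collapse in pairs or wrap around to cover only $2\ell+1$ distinct consecutive values. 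Resolving exactly which residues appear, and confirming the distinct count, is precisely the bookkeeping I would need to do with care.

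The main obstacle, therefore, is the modular arithmetic in identifying the exponent multiset: one must show that the $2t-1$ (resp. $2t$) exponents produced by Lemma~\ref{biadj-form}, after reduction mod $n$ and after the optimal choice of unit $a$ and shift $b$ in Lemma~\ref{bipartite-isom}, are exactly a set of consecutive integers $\{0, 1, \ldots, k\}$ with the right value of $k$ (which the target answer forces to be $2\ell$). The two "staircases" $\{\frac{s_i-1}{2}\}$ and $\{n - \frac{s_i+1}{2}\}$ are each manifestly arithmetic progressions of step $1$; the content is that they abut (the largest element of the first is adjacent mod $n$ to the smallest element of the second, and the wrap-around from the largest of the second back to $0 = \frac{s_1-1}{2}$ closes up correctly given $n \geq 2\ell + 2$). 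I would handle the parity split between (1) and (2) by noting it is exactly what makes $s_t = n$ versus $s_t = n-1$, hence which case of Lemma~\ref{biadj-form} applies, and then treating the two resulting exponent lists as separate but parallel computations. Once the consecutive-power form is established, Theorem~\ref{bipartite-sequential} closes both cases immediately, and Theorem~\ref{Mbound} together with the rank theorem records the minimum-rank consequence.
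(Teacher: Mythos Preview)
Your approach is exactly the paper's: compute the biadjacency matrix via Lemma~\ref{biadj-form}, shift the exponent set into the form $\{0,1,\ldots,t\}$ using Lemma~\ref{bipartite-isom}, and read off $M(G)=Z(G)=2t$ from Theorem~\ref{bipartite-sequential}. Case~(1) is handled correctly: with $t=\ell+1$ generators and $s_t=n$, Lemma~\ref{biadj-form}(2) produces $2\ell+1$ exponents which, writing $k=\frac{n-1}{2}$, are precisely $k-\ell,k-\ell+1,\ldots,k+\ell$; shifting by $-(k-\ell)$ gives $P^0+\cdots+P^{2\ell}$ and hence $4\ell$.

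The confusion in case~(2) is a simple miscount, not a structural problem. The set $\{n-2\ell-1,n-2\ell+1,\ldots,n-1\}$ has $\ell+1$ elements, not $2\ell+1$; so $t=\ell+1$ and Lemma~\ref{biadj-form}(1) outputs $2t=2\ell+2$ exponents, not $4\ell+2$. There is no collapsing or wrap-around to explain away. Writing $k=\frac{n-2}{2}$, the exponents $\frac{s_i-1}{2}$ are $k-\ell,k-\ell+1,\ldots,k$ and the exponents $n-\frac{s_i+1}{2}$ are $k+1,k+2,\ldots,k+\ell+1$; together they are the $2\ell+2$ consecutive integers $k-\ell,\ldots,k+\ell+1$. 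A shift by $-(k-\ell)$ yields $P^0+P^1+\cdots+P^{2\ell+1}$, and Theorem~\ref{bipartite-sequential} gives $M(G)=Z(G)=2(2\ell+1)=4\ell+2$. Once you correct the generator count, the bookkeeping is no harder than in case~(1).
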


\begin{proof}
(1) Set $k= \frac{n-1}{2}$.  Then by 
Lemma \ref{biadj-form} (2), the biadjacency
matrix of $G$ has the form
\[P^{k-\ell} + P^{k -\ell+1} 
+ \cdots + P^{k} + P^{k+1} + \cdots + P^{k+\ell}.\]
Let $b= k-\ell$.  Then by Lemma \ref{bipartite-isom},
$G$ is isomorphic to the graph with biadjacency matrix
\[P^{k-\ell-b =0} + P^1 + P^2+ \cdots + P^{k+\ell-b}.\]
Since $k+\ell-b = 2\ell$, by Theorem \ref{bipartite-sequential} we get 
$M(G) = Z(G) = 4\ell$.

The proof of (2) is similar.  Let $k =\frac{n-2}{2}$. 
By Lemma \ref{biadj-form}, the biadjacency matrix of $G$ has 
the form
\[P^{k-\ell} + P^{k-\ell+1} + \cdots + P^{k} + 
P^{k+1} + \cdots + P^{k+\ell+1}.\]
Using Theorem \ref{bipartite-isom}, this graph
is isomorphic to the graph with biadjacency matrix
\[P^{0}+P^{1}+\cdots +P^{k+\ell+1-k+\ell}.\]
So, Theorem \ref{bipartite-sequential} gives us
the conclusion $M(G) = Z(G) = 2(2\ell+1) =4\ell+2$.
\end{proof}

For our last result, we require the following
result about complete bipartite graphs.

\begin{theorem}\label{biparitteresult2}
Fix $n,\ell \in \mathbb{N}$ with $n > 1$,
$\ell \geq 1$, and $n \geq 2\ell-1$.
\begin{enumerate}
    \item  If $2\ell-1 \leq n-1$ and 
    $G = C_{2n}(1,3,\ldots,2\ell-1)$, then 
    $M(G) = Z(G) = 4\ell-2$.
    \item If $2\ell-1=n$ and 
    $G=C_{2n}(1,3,\ldots,2\ell-1)$, then
    $M(G) = Z(G) =4\ell-4$.
\end{enumerate}
\end{theorem}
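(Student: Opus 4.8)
The plan is to pin down the biadjacency matrix of $G$ from Lemma~\ref{biadj-form}, use Meyer's isomorphism (Lemma~\ref{bipartite-isom}) to slide the exponents into a consecutive block starting at $0$, and then read off $M(G)=Z(G)$ from Theorem~\ref{bipartite-sequential}; in the degenerate situation where the block exhausts $\mathbb{Z}/n\mathbb{Z}$, $G$ is a complete bipartite graph and I would quote the known values $M(K_{p,q})=Z(K_{p,q})=p+q-2$ there instead.

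For part (1), first note that $S=\{1,3,\ldots,2\ell-1\}$ consists of $\ell$ odd integers with $1\in S$, so $G$ is connected, and $s_t=2\ell-1\le n-1<n$, so $G$ is a connected bipartite circulant (Theorem~\ref{bicirc}) and Lemma~\ref{biadj-form}(1) applies. Writing $s_i=2i-1$ one has $\frac{s_i-1}{2}=i-1$ and $n-\frac{s_i+1}{2}=n-i$, so the biadjacency matrix is
\[P^0+P^1+\cdots+P^{\ell-1}+P^{n-\ell}+P^{n-\ell+1}+\cdots+P^{n-1},\]
a sum of $2\ell$ terms, which are distinct since $2\ell\le n$. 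I would then apply Lemma~\ref{bipartite-isom} with the unit $a=1$ and shift $b=\ell$: modulo $n$ this carries $\{0,\ldots,\ell-1\}$ to $\{\ell,\ldots,2\ell-1\}$ and $\{n-\ell,\ldots,n-1\}$ to $\{0,\ldots,\ell-1\}$, so $G$ is isomorphic to the bipartite circulant with biadjacency matrix $P^0+P^1+\cdots+P^{2\ell-1}$. If $2\ell-1<n-1$ this is a proper consecutive block, and Theorem~\ref{bipartite-sequential} with $t=2\ell-1$ gives $M(G)=Z(G)=2(2\ell-1)=4\ell-2$. If instead $2\ell-1=n-1$ the block is all of $\mathbb{Z}/n\mathbb{Z}$, the biadjacency matrix is $J_n$, so $G\cong K_{n,n}=K_{2\ell,2\ell}$, and $M(K_{p,q})=Z(K_{p,q})=p+q-2$ again yields $4\ell-2$.

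For part (2), now $s_t=2\ell-1=n$, so $n$ is odd and Lemma~\ref{biadj-form}(2) applies, the two contributions of $s_t$ collapsing to the single matrix $P^{(s_t-1)/2}=P^{\ell-1}=P^{n-(s_t+1)/2}$. The remaining $s_i=2i-1$ with $i<\ell$ still contribute $P^{i-1}$ and $P^{n-i}$, and using $n-\ell+1=\ell$ one checks that the exponents that occur are exactly $\{0,\ldots,\ell-1\}\cup\{\ell,\ldots,2\ell-2\}=\{0,1,\ldots,n-1\}$. Hence the biadjacency matrix is $J_n$ and $G\cong K_{n,n}$, so $M(G)=Z(G)=2n-2=2(2\ell-1)-2=4\ell-4$. (Equivalently, $P^0+\cdots+P^{2\ell-2}$ is the $t=2\ell-2$ case of Theorem~\ref{bipartite-sequential}.)

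None of this is deep---the proof is really just careful arithmetic modulo $n$---and the main thing to watch is the boundary behaviour: in part (1) when $2\ell=n$, and throughout part (2), the consecutive biadjacency matrix degenerates to $J_n$, and there one must invoke the complete bipartite values rather than Theorem~\ref{bipartite-sequential} as literally stated. I would also check the hypotheses as I go: that $1\in S$ gives connectedness needed for Lemma~\ref{bipartite-isom}, that the two blocks of exponents are disjoint exactly when $2\ell\le n$ so the term count is correct, that $b=\ell$ is the shift that lands the block at $0$, and that $n>1$ (hence the degenerate complete bipartite graphs are $K_{m,m}$ with $m\ge2$) keeps us within the range of the cited formula.
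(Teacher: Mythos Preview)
Your proof is correct and follows essentially the same route as the paper: compute the biadjacency matrix via Lemma~\ref{biadj-form}, shift the exponents by $\ell$ using Lemma~\ref{bipartite-isom} to obtain a consecutive block, and read off the answer from Theorem~\ref{bipartite-sequential}. The paper is slightly terser in that it applies Theorem~\ref{bipartite-sequential} uniformly (including the $t=n-1$ case, which it treats as covered by that theorem) rather than separating out the $K_{n,n}$ degeneration, but your extra care there, and your explicit check of connectedness, are harmless refinements of the same argument.
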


\begin{proof}   
	The proof of both statements are similar to the 
	proof of Theorem \ref{bipartiteresult1}.
	
	(1)  By Lemma
	\ref{biadj-form}, the biadjacency matrix of
	$G$ has the form
	\[P^0+P^1+\cdots +P^{\ell-1}+P^{n-\ell}+\cdots+
	P^{n-1}.\]
	Adding $\ell$ to each exponent, by Lemma
	\ref{bipartite-isom}, the graph $G$ is isomorphic
	to the graph with the biadjacency matrix
	\[P^0+P^1+ \cdots + P^{\ell-1+\ell}.\]
	Employing Theorem \ref{bipartite-sequential} 
	gives us $M(G) = Z(G) = 2(2\ell-1) =4\ell-2$.
	
	(2)  If $G = C_{2n}(1,3,\ldots,n)$, then 
	the biadjacency matrix is $P^0 + P^1 + \cdots + P^{n-1}$,
	and consequently, $M(G) = Z(G) = 2n-2 = 2(2\ell-1)-2
	=4\ell-4$ by Theorem \ref{bipartite-sequential}.
	Alternatively, one notes that $G = K_{n,n}$, and
	so the result follows, for example, by
	\cite[Observation 3]{BHL}.
	\end{proof}

%%%%%%%%%%%%%%%%%%%%%%%%%%%%%%%%%%%%%%%%%%%%%%%%%%%%%%%%%%%%%%%%%%%%%%
\section{\textbf{Circulants which are torus products}\label{torus}}

\begin{figure}
	\label{fig:k2mob}
	\begin{center}
		\begin{tikzpicture}
		%labels
		\vertexdef
		
		\node (n0) at (0,-0.5) {$x_{2,1}$};
		\node (n1) at (0,1.5)  {$x_{1,1}$};
		\node (n2) at (1,-0.5) {$x_{2,2}$};
		\node (n3) at (1,1.5) {$x_{1,2}$};
		\node (n4) at (2,-0.5)  {$x_{2,3}$};
		\node (n5) at (2,1.5) {$x_{1,3}$};
		\node (n6) at (3,-0.5)  {$x_{2,4}$};
		\node (n7) at (3,1.5) {$x_{1,4}$};
		\node (n8) at (4,-0.5) {$x_{2,5}$};
		\node (n9) at (4,1.5)  {$x_{1,5}$};
		\node (n10) at (5,-0.5) {$x_{2,6}$};
		\node (n11) at (5,1.5)  {$x_{1,6}$};

		[thick,scale=0.8]%

		\foreach \x in {0}
		%horiz. edges for C_7
		\draw (0,0) -- (5,0);
		\draw (0,1) -- (5,1);

		%curve edges for C_7
		\draw (0,0) -- (5,1);
		\draw (0,1) -- (5,0);

		% vertical edges for K_2
		\draw (0,0) -- (0,1);
		\draw (1,0) -- (1,1);
		\draw (2,0) -- (2,1);
		\draw (3,0) -- (3,1);
		\draw (4,0) -- (4,1);
		\draw (5,0) -- (5,1);

		%vertices
		\node[cv] (0,0) {};
		\node[cv] at (1,0) {};
		\node[vertex] at (2,0) {};
		\node[vertex] at  (3,0) {};
		\node[vertex] at  (4,0) {};
		\node[vertex] at  (5,0) {};
		\node[cv] at (0,1) {};
		\node[cv] at (1,1) {};
		\node[vertex] at  (2,1) {};
		\node[vertex] at  (3,1) {};
		\node[vertex] at  (4,1) {};
		\node[vertex] at  (5,1) {};
		\end{tikzpicture} %%%%%%%%%%%%%%%%%%%%%%%%%%%%%%%%%%%%
		\qquad
		\begin{tikzpicture}
		%labels
		\vertexdef
		
		\node (n0) at (0,-0.5) {$v_0$};
		\node (n1) at (0,1.5)  {$v_6$};
		\node (n2) at (1,-0.5) {$v_1$};
		\node (n3) at (1,1.5) {$v_7$};
		\node (n4) at (2,-0.5)  {$v_2$};
		\node (n5) at (2,1.5) {$v_8$};
		\node (n6) at (3,-0.5)  {$v_3$};
		\node (n7) at (3,1.5) {$v_{9}$};
		\node (n8) at (4,-0.5) {$v_4$};
		\node (n9) at (4,1.5)  {$v_{10}$};
		\node (n10) at (5,-0.5) {$v_{5}$};
		\node (n11) at (5,1.5)  {$v_{11}$};

		[thick,scale=0.8]%

		\foreach \x in {0}
		%horiz. edges for C_7
		\draw (0,0) -- (5,0);
		\draw (0,1) -- (5,1);

		%curve edges for C_7
		\draw (0,0) -- (5,1);
		\draw (0,1) -- (5,0);

		% vertical edges for K_2
		\draw (0,0) -- (0,1);
		\draw (1,0) -- (1,1);
		\draw (2,0) -- (2,1);
		\draw (3,0) -- (3,1);
		\draw (4,0) -- (4,1);
		\draw (5,0) -- (5,1);

		%vertices
		\node[cv] (0,0) {};
		\node[cv] at (1,0) {};
		\node[vertex] at (2,0) {};
		\node[vertex] at  (3,0) {};
		\node[vertex] at  (4,0) {};
		\node[vertex] at  (5,0) {};
		\node[cv] at (0,1) {};
		\node[cv] at (1,1) {};
		\node[vertex] at  (2,1) {};
		\node[vertex] at  (3,1) {};
		\node[vertex] at  (4,1) {};
		\node[vertex] at  (5,1) {};

		\end{tikzpicture}\end{center}
	\caption{The M\"{o}bius ladder $K_2 \mob  C_6 \cong C_{12}(1,6)$ with a zero forcing set.}
	\label{mobiusladder}
\end{figure}
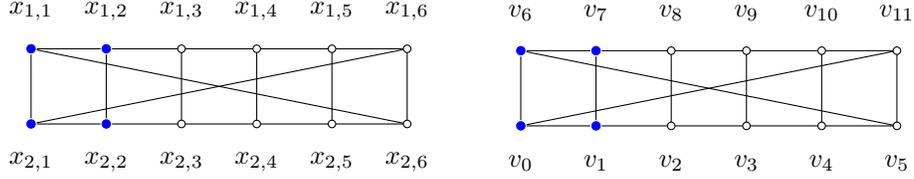

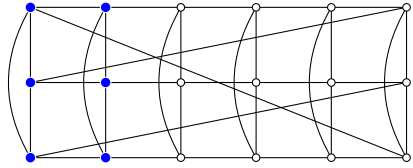
\begin{figure}
	\label{fig:k3mob}
	\begin{center}
		\begin{tikzpicture}
		%labels
		\vertexdef
		
		[thick,scale=0.8]%

		\foreach \x in {0}
		%horiz. edges for C_7
		\draw (0,0) -- (5,0);
		\draw (0,1) -- (5,1);
		\draw (0,2) -- (5,2);

		%curve edges for C_7
		\draw (0,0) -- (5,1);
		\draw (0,1) -- (5,2);
		\draw (0,2) -- (5,0);

		% vertical edges for K_2
		\draw (0,0) -- (0,2);
		\draw (1,0) -- (1,2);
		\draw (2,0) -- (2,2);
		\draw (3,0) -- (3,2);
		\draw (4,0) -- (4,2);
		\draw (5,0) -- (5,2);
		
		\draw (0,0) to[out=120, in=240] (0,2);
		\draw (1,0) to[out=120, in=240] (1,2);
		\draw (2,0) to[out=120, in=240] (2,2);
		\draw (3,0) to[out=120, in=240] (3,2);
		\draw (4,0) to[out=120, in=240] (4,2);
		\draw (5,0) to[out=120, in=240] (5,2);

		%vertices
		\node[cv] (0,0) {};
		\node[cv] at (1,0) {};
		\node[vertex] at (2,0) {};
		\node[vertex] at  (3,0) {};
		\node[vertex] at  (4,0) {};
		\node[vertex] at  (5,0) {};
		\node[cv] at  (0,1) {};
		\node[cv] at  (1,1) {};
		\node[vertex] at  (2,1) {};
		\node[vertex] at  (3,1) {};
		\node[vertex] at  (4,1) {};
		\node[vertex] at  (5,1) {};
		\node[cv] at  (0,2) {};
		\node[cv] at  (1,2) {};
		\node[vertex] at  (2,2) {};
		\node[vertex] at  (3,2) {};
		\node[vertex] at  (4,2) {};
		\node[vertex] at  (5,2) {};
		\end{tikzpicture}
	
		\end{center}
	\caption{The torus product $K_3 \mob  C_6 \cong C_{18}(1,6)$ with a zero forcing set.}\label{secondqm}
\end{figure}

In this section we extend the definition of the 
M\"obius ladder to a type of torus product.
The zero forcing number for the M\"{o}bius ladder was calculated in \cite{AIM} to be four (e.g., see Lemma \ref{lem:facts}(7)).  We compute the zero forcing number for
our torus products, and as a corollary, we are able to compute
the zero forcing number for a new family of circulant graphs.  We
also give evidence for a conjecture on the minimal rank of this family.

Recall that the Cartesian product of the graphs $G$
and $H$ with 
%vertex sets 
$V(G)=\{x_1,x_2,\ldots,x_n\}$
and 
$V(H)=\{y_1,y_2, \ldots,y_m\}$
%, respectively, 
is the graph $G\sq H$ with 
vertex set
$V=\{(x,y) ~|~x\in V(G), y\in V(H)\}$ with two vertices $(x_i,y_j)$, $(x_k,y_\ell)$ adjacent if 
either $i=k$ and $y_j$ is adjacent to $y_{\ell}$ in $H$, or $j=\ell$ and $x_i$ is adjacent to $x_k$ in $G$.  We position the vertices
of $G \sq H$ in a $n \times m$ grid  such that  the $i$-th column
contains the vertices $(x_k,y_i)$, 
for $1\leq k\leq n$,
%as $k$-varies, 
and the $j$-th
row contains the vertices $(x_j,y_k)$, for $1\leq k\leq m$.
%as $k$-varies.  
Then $G \sq H$
essentially  consists of 
$m$ copies of $G$ as columns and $n$ copies of 
$H$ as rows. The product
$C_n \sq C_m$ can be pictured as a lattice on a  
torus (see \cite{MMNR}).
For $m\geq 3$, define the \emph{torus product}
graph $G \mob C_m$ to consist of $m$ copies, $G_1,\ldots G_m$, of 
$G$ with $G_i$ having vertices 
$x_{1,i},x_{2,i},\ldots,x_{n,i}$ with edges between copies as follows: for $1\leq i\leq m-1$
and $1\leq k\leq n$, 
$x_{k,i}$ is adjacent to $x_{k,i+1}$
and, with subscript addition modulo $n$, $x_{i,m}$ is adjacent to $x_{i+1,1}$.  
Then the M\"{o}bius ladder is 
simply the torus product $K_{2}\mob C_m$ (see for example Figures ~\ref{mobiusladder} and \ref{secondqm}).
Note that the torus product
$C_n\mob C_m$ is referred to
as a twisted torus in 
\cite{MMNR}. 

The following proof takes advantage of the
fact that the torus product $K_n \mob C_m$ is 
locally similar to the Cartesian product $K_n \sq  C_m$. 
Let $G=K_n\sq C_m$ for $m\geq 4$. It was shown in \cite{AIM} that
$Z(G)=M(G)=2n$. Below we give an alternate argument that $Z(G)=2n$; the same argument
applies to the torus 
product $K_n \mob C_m$. 

\begin{theorem}\label{thm:mobK}
Let $G=K_n \sq C_m$ or $G=K_n \mob C_m$. If $m\geq 4$, then  $Z(G)= 2n$. If $m=3$ and $n\geq 3$, then
$Z(G)=2n-1$. If $m=3$ and $n=2$, then 
%$G$ is a M\"obius ladder and
$M(G)=Z(G)=4$. 
\end{theorem}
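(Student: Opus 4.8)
The plan is to prove the three equalities by establishing matching upper and lower bounds on $Z(G)$, treating $K_n\sq C_m$ and $K_n\mob C_m$ in parallel: the two graphs agree except for how the ``wrap'' copy $G_m$ attaches to $G_1$, and every argument below is local enough to be blind to that difference. Throughout I write $G_1,\dots,G_m$ for the copies of $K_n$, with $G_i$ having vertices $x_{1,i},\dots,x_{n,i}$.

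\emph{Upper bounds (explicit zero forcing sets).} For $m\ge 4$ take $F=V(G_1)\cup V(G_2)$, of size $2n$: for each $k$ the filled vertex $x_{k,2}$ has $x_{k,3}$ as its only unfilled neighbour, so $G_3$ fills, then $G_4$, and so on around the cycle; hence $Z(G)\le 2n$. (The wrap attachment is never used, so this works for both products.) For $m=3$ and $n\ge 3$ take $F=V(G_1)\cup\bigl(V(G_2)\setminus\{x_{n,2}\}\bigr)$, of size $2n-1$: first the vertices $x_{k,1}$ with $k\le n-1$ force their third-copy neighbours, filling all of $G_3$ except one vertex $w$; a vertex already in $G_3$ (distinct from $w$, which exists since $n\ge3$) then forces $w$, using that $G_1$ is full; finally a vertex of $G_3$ forces $x_{n,2}$. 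With the relabelling forced by the twist this argument is unchanged for $K_n\mob C_3$, so $Z(G)\le 2n-1$. For $m=3$, $n=2$ in the torus product one checks $K_2\mob C_3\cong K_{3,3}$ (partition the vertices by the parity class determined by the twist), so $M(G)=Z(G)=4$ by the value of $M$ and $Z$ for complete bipartite graphs, e.g.\ Theorem~\ref{biparitteresult2}(2) with $\ell=2$.

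\emph{Lower bounds.} The key observation is: in any zero forcing process on $K_n\sq C_m$ or $K_n\mob C_m$, if a vertex of $G_i$ is forced by a neighbour lying in $G_{i-1}$ (or $G_{i+1}$), then $G_{i-1}$ (resp.\ $G_{i+1}$) is completely filled at that moment, because the forcing vertex is adjacent to every other vertex of its own copy. Hence, up to the first time some copy becomes full, every force stays inside a copy; and since the first such ``vertical'' force inside a copy already requires $n-1$ of that copy's vertices to be filled, the first copy to become full must contain $n-1$ vertices of the initial set $F$. Propagating from there --- determining which cross neighbours must be seeded before a full copy can push the filled region outward, and how the two fronts running around $C_m$ are forced to meet --- one upgrades this to $|F|\ge 2n$ when $m\ge4$. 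For $m=3$ the same bookkeeping yields only $|F|\ge 2n-1$ (when $n\ge3$): in $C_3$ the copy ``two steps away'' is the same as the adjacent copy on the other side, so one extra seed does double duty, which is precisely the slack the $m=3$ construction above exploits. Alternatively, for $G=K_n\sq C_m$ with $m\ge4$ one may simply quote $M(G)=Z(G)=2n$ from \cite{AIM}; that argument uses only local structure and hence carries over verbatim to $K_n\mob C_m$. Combining with $M(G)\le Z(G)$ (Theorem~\ref{Mbound}) and, in the $n=2,m=3$ case, $M(G)\ge4$ from $K_{3,3}$, gives all three stated equalities.

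\emph{Main obstacle.} The real work is the lower bound. The ``horizontal force needs a full copy'' lemma cleanly locates one copy holding $n-1$ seeds, but turning this into the sharp count $2n$ --- and, crucially, seeing that the count drops to exactly $2n-1$ when $m=3$ --- needs a genuine case analysis of how the filled set grows around $C_m$, in particular ruling out that a cunningly placed set of size $2n-1$ could cascade the whole way around when $m\ge4$. Making that analysis uniform in $n$ and $m$ and across the two products is the delicate point; the $n=2$, $m=3$ exception, where the generic lower-bound argument degenerates and the true value jumps to $4$, shows that one cannot be cavalier about it.
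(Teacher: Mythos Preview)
Your upper-bound constructions are correct, and your $m=3$ forcing set $V(G_1)\cup(V(G_2)\setminus\{x_{n,2}\})$ is a clean alternative to the paper's choice of $n-1$ vertices in each of two columns plus one in the third. The identification $K_2\mob C_3\cong K_{3,3}$ for the final case is also fine.

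The genuine gap is the lower bound. You correctly isolate the key local fact---a horizontal force out of $G_i$ requires $G_i$ to be completely filled---and correctly deduce that the first full column carries at least $n-1$ seeds. But from there you write only that ``propagating \dots\ one upgrades this to $|F|\ge 2n$'' and that for $m=3$ ``the same bookkeeping yields only $|F|\ge 2n-1$''; neither the propagation nor the bookkeeping is ever carried out, and your ``Main obstacle'' paragraph explicitly concedes that this is where the real work lies. The paper does perform this count: after the first force one has three consecutive columns $L,M,R$ with $M$ full and one vertex of $F$ in each of $L$ and $R$; before the filled region can escape $\{L,M,R\}$, one of $L,R$ (say $R$, with $r$ vertices in $F$) must become full, and each of the $n-r$ vertices of $R$ not in $F$ can only be forced from $M$ once its row-mate in $L$ is filled, forcing at least $n-r-1$ further vertices of $L$ into $F$ and yielding $|F|\ge(n+1)+r+(n-r-1)=2n$. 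For $m=3$ a separate count with $\ell=|F\cap L|$, $r=|F\cap R|$, and $t$ the number of doubly-seeded rows gives $\ell+r-t\ge n-1$ and hence $|F|\ge 2n-1$. None of this appears in your proposal. Your fallback of quoting \cite{AIM} for $K_n\sq C_m$ and asserting that the argument ``uses only local structure and hence carries over verbatim'' to $K_n\mob C_m$ is not a proof either: that transfer is precisely what the theorem is asserting, and an explicit uniform argument (such as the one above) is what is required to justify it.
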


\begin{proof}
Let $G=K_n \sq C_m$ or $G=K_n \mob C_m$.

First consider the case $m\geq 4$.  Assume the vertices are in a grid as described
before the theorem.
The argument uses the fact that locally, about a column of vertices, the
graphs of $K_n \sq C_m$ and $K_n \mob C_m$ both have the subgraph structure $K_n \sq P_3$. (In fact there is an automorphism of $G$ that takes column $G_i$ to $G_k$  for any $i,k$.)

Observe that if $2n$ vertices of two adjacent
copies of $K_n$ are filled, this set is a zero forcing set of $G$. Thus, $|Z(G)|\leq 2n$.

Let $F$ be a minimal zero forcing set for $G$. 
Pick a forcing vertex $v\in F$. All but one neighbour of $v$ must
be in $F$. Since $G$ is $(n+1)$--regular, $|F|\geq n+1$. 
Once a force is made from $v$, the filled vertices are all those vertices
in some copy of $K_n$ (a column), plus two additional vertices in some copy of $C_m$ (a row). 
These $n+2$ filled vertices are in three consecutive columns, say $L,M,$ and $R$, the middle column $M$ being completely filled. 

In order for a vertex outside of these three columns to force some other vertex,
it must be in a column that already has $n$ filled vertices.
If these $n$ vertices were originally in $F$, then $|F|\geq (n+1)+(n)=2n+1$. Thus, before a vertex outside of the three columns can do any forcing, some vertex in one of the three columns must first force a vertex outside these columns.  

Without loss of generality, let $u$ be the first vertex in column $R$ used to force a vertex outside
the three columns. Then the remaining vertices in $R$ are already filled. Suppose $r\geq 0$ vertices of column $R$ are in $F$. Then the remaining
$n-r$ vertices in column $R$ must have been forced from vertices in column $M$. For any vertex in column
$M$ to force a vertex in column $R$, there must already be a vertex in column $L$, in the same row,
that is filled. This implies that there must be at least $n-r-1$ vertices in 
column $L$ that are in $F$. Hence $|F|\geq (n+1)+r+(n-r-1)=2n$. 
Therefore $Z(G)=2n$.

Now consider the case $m=3$. Let $F$ be a forcing set. As noted above, for a vertex  $v\in F$ to force another vertex, there must be at least $(n-2)$ other vertices of $F$ in the same column as $v$. As labelled above, $M$ must start off with
either (a) $n-1$ vertices in $F$ or (b) $n$ vertices in $F$. Note that in case (a), there must be at least one row with vertices of $F$ in both $L$ and $R$. In either case,  after a force is made, the set of all filled vertices must then contain all the vertices of $M$ and two vertices, one in $L$ and one in $R$, adjacent to a common vertex in $M$. If the zero forcing set $F$ contains
$\ell$ vertices of $L$ and $r$ vertices of $R$, then the vertices of $M$ could force at most $\ell$ vertices of $R$ and $r$ vertices of $L$. Thus, after forcing, at most $\ell+r$ vertices of $L$ (and their corresponding vertices in $R$ are filled). If $\ell+r <n-1$, then no further forcing can occur. Thus $\ell+r\geq n-1$.
In fact, if $t$ is the number of rows that have a vertex of $F$ in both $L$ and $R$, then $\ell+r -t \geq n-1$. 
In case (a), $t\geq 1$ and in case (b) $t\geq 0$.
It follows that $|F|\geq 2n-1.$ 
To construct a minimal zero forcing set, let $F$ consist of the
first $n-1$ vertices of $L$ (with $L$ as column 1), the first $n-1$ vertices of $M$ (with $M$ as column 2), and
the first vertex of $R$ (column 3). Then we claim
that $F$ is  a forcing set with exactly $2n-1$ vertices. In particular $x_{2,1}$ can force $x_{n,1}$ and then $x_{1,2}$ can force $x_{n,2}$. From here the remaining vertices in $R$ can be forced by
vertices in $M$. 
Thus, $Z(G)=2n-1$.
 
Finally, for the case $m=3$ and $n=2$,  $G$ is a M\"obius ladder and
so by \cite{AIM}, $M(G)=Z(G)=4$. 
\end{proof}

\begin{figure}[ht]
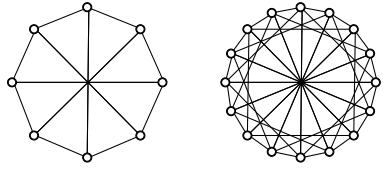

\[\Circulant{8}{1,4} \qquad \Circulant{16}{1,4,8}\]
\caption{The circulant graphs $C_{8}(1,4)\cong K_2 \mob C_4$
and $C_{16}(1,4,8)\cong K_4 \mob C_4$.}
\label{fig:mob}
\end{figure}

\begin{remark}
As seen above, the proof takes advantage of the shared local structure of the 
Cartesian and torus  products. Note the only difference between
the two graphs is the particular permutation of adjacencies between the
first column $G_1$ and the last column $G_n.$ As such, the same result holds
true for a much larger class of graphs, if all of the $n!$ permutations of the adjacencies
between the first and last column are considered, not just the two specified by $G\sq C_m$ and 
$G\mob C_m$.
\end{remark}

Theorem \ref{thm:mobK} can now be applied to 
the study of circulant graphs.
In the next theorem  $m\geq 2$ since if $m=1$ then the graph considered is a consecutive circulant which is already discussed in Theorem~\ref{1234}.

\begin{theorem}\label{thm:mobC} 
Fix $n \geq 3$, and 
let $G = C_{nm}(1,m,2m,\ldots,bm)$ with 
$b = \floor{\frac{n}{2}}$.
If $m=2$, then $Z(G)=n+1$.
If $m=3$, then $Z(G)=2n-1$. If $m\geq 4$, then $Z(G)=2n$.
\end{theorem}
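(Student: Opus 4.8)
The plan is to reduce the cases $m\ge 3$ to Theorem~\ref{thm:mobK} by exhibiting an explicit isomorphism $C_{nm}(1,m,2m,\ldots,bm)\cong K_n\mob C_m$, and to treat the degenerate case $m=2$ by a direct argument.

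First I would set up the isomorphism. Label the vertices of $K_n\mob C_m$ as $x_{k,i}$ with $k\in\{1,\ldots,n\}$ the $K_n$-coordinate and $i\in\{1,\ldots,m\}$ the copy index, and define $\varphi(x_{k,i})=v_{(k-1)m+(i-1)}$, a bijection onto $\{v_0,\ldots,v_{nm-1}\}$. Then one verifies two things. (i) For fixed $i$, the images $v_{i-1},v_{m+i-1},\ldots,v_{(n-1)m+i-1}$ are exactly one residue class modulo $m$, and the connection set $\{m,2m,\ldots,bm\}$ of $C_{nm}$ restricted to that class realizes $C_n(1,2,\ldots,\floor{\frac{n}{2}})=K_n$; this is where $b=\floor{\frac{n}{2}}$ is used, and it recovers a copy $G_i$. (ii) The single connection $1$ of $C_{nm}$ produces precisely the inter-copy edges of $K_n\mob C_m$: for $i\le m-1$ it gives the matchings $x_{k,i}\sim x_{k,i+1}$, and from the top copy it gives $v_{km-1}\sim v_{km}$, i.e.\ $x_{k,m}\sim x_{k+1,1}$ with the $k$-subscript modulo $n$, which is exactly the M\"obius twist. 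Granting the isomorphism, Theorem~\ref{thm:mobK} immediately gives $Z(G)=2n$ when $m\ge 4$ and $Z(G)=2n-1$ when $m=3$; here the hypothesis $n\ge 3$ keeps us out of the exceptional subcase $m=3$, $n=2$.

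For $m=2$ the torus-product picture degenerates, so I would argue directly. The graph $G=C_{2n}(1,2,4,\ldots,2\floor{\frac{n}{2}})$ is $(n+1)$-regular (check the two cases of Theorem~\ref{thm:regular} according to the parity of $n$), so $Z(G)\ge n+1$ by Lemma~\ref{lem:facts}(1). For the matching upper bound, take $F$ to be the $n$ even-indexed vertices $v_0,v_2,\ldots,v_{2n-2}$ together with $v_1$, so $|F|=n+1$. In $G$ the even vertices induce a $K_n$, and each even vertex $v_{2i}$ has exactly two further neighbours, $v_{2i-1}$ and $v_{2i+1}$. Hence $v_0$ forces $v_{2n-1}$ (its odd neighbours are $v_1\in F$ and $v_{2n-1}$), then $v_2$ forces $v_3$, and inductively once $v_{2i-1}$ is filled $v_{2i}$ forces $v_{2i+1}$; this fills every odd vertex, so $F$ is a zero forcing set and $Z(G)=n+1$.

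The only substantive work is the verification of the isomorphism in the second paragraph; the rest is a citation of Theorem~\ref{thm:mobK} and the short cascade above. Within that verification the delicate point is the wraparound: one must confirm that connection $1$ links copy $G_m$ back to $G_1$ with a one-step shift in the $K_n$-coordinate (so the product is $\mob$, not $\sq$), and also that when $n$ is even the diameter connection $bm=nm/2$ still only contributes edges already present inside each $K_n$. Both are routine modular arithmetic, but they are precisely where an off-by-one error would be invisible, so I would write them out carefully.
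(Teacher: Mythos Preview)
Your proposal is correct and follows essentially the same approach as the paper: establish $C_{nm}(1,m,2m,\ldots,bm)\cong K_n\mob C_m$ for $m\ge 3$ and invoke Theorem~\ref{thm:mobK}, then handle $m=2$ directly via the regularity lower bound and an explicit zero forcing set. The paper sketches the isomorphism by noting that $C_{nm}(m,2m,\ldots,bm)\cong mK_n$ and that adding connection $1$ produces the torus product, while you write out an explicit bijection $\varphi(x_{k,i})=v_{(k-1)m+(i-1)}$ and verify the edge correspondence; your $m=2$ forcing set (all even vertices together with $v_1$) is exactly an instance of the paper's ``$v$ and all but one neighbour'' with $v=v_0$.
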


\begin{proof}
Let $G = C_{nm}(1,m,2m,\ldots,bm)$ with $b = \floor{\frac{n}{2}}$.
Then 
$G$ can be obtained by combining the edges of $C_{nm}(1)\cong C_{nm}$ and
$C_{nm}(m,2m,\ldots bm) \cong mC_n(1,2,\ldots,b)=mK_n$.
One can then observe that for $m\geq 3$,  $C_{nm}(1,m,2m,\ldots, bm)$ is the 
torus 
%M\"obius 
product $K_n \mob C_m$ (see for example, 
Figures ~\ref{secondqm} and \ref{fig:mob}). The zero forcing number can then be obtained from Theorem~\ref{thm:mobK}. 

If $m=2$, then $G$ is a $(n+1)$--regular graph and by 
Lemma~\ref{lem:facts}(\ref{lem:kregular}),
$Z(G)\geq n+1$. Taking any vertex $v$ and all but one of its neighbours provides a zero forcing set of size $n+1$. In particular, $v$ will force its only unfilled neighbour. 
The remaining  $n-2$ unfilled vertices can be forced consecutively from the neighbours of $v$ with subscripts that have the same parity as that of $v$. 
%(when thinking of $G$ as %$K_n\mob C_2$).
\end{proof}

We expect that $M(G)=Z(G)$ for all the graphs in Theorem~\ref{thm:mobC}. 
 Using special matrices,
 Theorem~\ref{thm:mrankmobius}
and Theorem~\ref{thm:KnC6}  demonstrate that $M(G)=Z(G)$ for these graphs when $m=4$ and $m=6$. 

An $n\times n$ \emph{circulant Hankel} matrix $H$ is a matrix for which each row is 
shifted one position to the left from the row above it with a wrap around to the end
of the row. In particular, if the first row of $H$ is $(a_1,a_2,\ldots, a_n)$, then
the $k$th row of $H$ is $(a_k,a_{k+1},\ldots, a_{k-1})$.  For example, 
$$H = 
\left[ \begin{array}{rrrr}
1&2&4&-2\\   
2&4&-2&1\\
4&-2&1&2\\
-2&1&2&4
\end{array}\right]
$$
is a circulant Hankel.  Note that the reverse diagonals of a Hankel matrix are
constant and consequently the matrix is symmetric. 
Let $$P=\left[
\begin{array}{ccccc}
0&\cdots&\cdots&0&1\\
1&\ddots&&&0\\
0&\ddots&\ddots&&\vdots\\
\vdots&\ddots&\ddots&\ddots&0\\
0&\cdots&0&1&0\end{array}
\right].
$$ 
Note that if $H$ is a circulant Hankel matrix, then $PH=HP^T$ and $PH$ is itself a circulant Hankel matrix. 

\begin{lemma}\label{lem:Hank}
For $n\geq 3$, there exists an orthogonal circulant Hankel matrix $A$ such that both $A$ and $A-PA$ have no zero
entries.  	
	\end{lemma}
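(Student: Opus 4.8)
The plan is to construct an explicit family of orthogonal circulant Hankel matrices depending on a real parameter, and then argue that the "bad" conditions — having a zero entry in $A$ or in $A - PA$ — are avoided for all but finitely many choices of the parameter. The starting point is the observation that a circulant Hankel matrix is determined by its first row $(a_0,a_1,\ldots,a_{n-1})$, and that circulant Hankel matrices of a fixed size form a commutative algebra under a suitable twist: since $PH = HP^T$ for any circulant Hankel $H$, the matrix $P^{-1}H$ is a (standard) circulant matrix, so $H = P C$ for a circulant matrix $C$. Orthogonality of $H$ is then equivalent to orthogonality of $C$ (as $P$ is orthogonal), and orthogonal circulant matrices are completely understood via the discrete Fourier transform: $C$ is orthogonal iff all eigenvalues $\lambda_j = \sum_k c_k \omega^{jk}$ (with $\omega = e^{2\pi i/n}$) have modulus $1$, subject to the conjugacy constraints that make $C$ real.

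First I would produce one convenient orthogonal circulant matrix to use as a seed; for instance, a real circulant reflection or rotation whose first row has no zero entries, or — more flexibly — take a one-parameter family such as $C(\theta) = \cos\theta \, I + \sin\theta \, R$ where $R$ is a fixed real orthogonal circulant with a generating vector having all nonzero entries (e.g. built from $\tfrac{1}{\sqrt{n}}$ times a vector of $\pm 1$'s chosen so that the DFT has unimodular entries, which is possible for every $n \geq 3$; alternatively use products of the basic reflections generating the appropriate orthogonal group). Then $A(\theta) := P\,C(\theta)$ is an orthogonal circulant Hankel matrix for every $\theta$, and both $A(\theta)$ and $A(\theta) - PA(\theta) = P\bigl(C(\theta) - P C(\theta)\bigr)$ have entries that are (up to the fixed permutation $P$) trigonometric polynomials in $\theta$: each entry is of the form $\alpha\cos\theta + \beta\sin\theta$ for constants $\alpha,\beta$ not both zero (the "not both zero" part is where a small amount of care is needed, and is precisely the place the seed matrix $R$ must be chosen so that no entry of $I$, $R$, $I - PI$, or $R - PR$ forces a cancellation of \emph{both} coefficients simultaneously). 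Each such entry vanishes for at most finitely many $\theta$ in $[0,2\pi)$, and there are only finitely many entries, so outside a finite exceptional set of $\theta$ the matrix $A(\theta)$ works.

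The main obstacle is the bookkeeping in the previous paragraph: I must exhibit, for every $n \geq 3$, a real orthogonal circulant $R$ such that none of the $2n$ relevant entry-functions $\theta \mapsto (\,\cdot\,)$ is identically zero. For $A(\theta)$ itself this is automatic once $R$ has all first-row entries nonzero and $R \neq \pm I$ (so that the $\sin\theta$ coefficient is nonzero in every position), but for $A(\theta) - PA(\theta) = P(C - PC)$ one needs that the circulant $I - P I = I - P$ and the circulant $R - PR$ do not have a common zero position — equivalently, that $R$ is not fixed by the shift-conjugation in any coordinate where $I$ fails to move. Concretely $I - P$ has first row $(1,0,\ldots,0,-1)$, so it has zeros in positions $1,\ldots,n-2$; one then just needs $R - PR$ to be nonzero in every one of those positions, which holds as soon as consecutive entries of the generating vector of $R$ differ, a condition easily arranged (e.g. take the generating vector of $R$ to be a generic unimodular-DFT vector, or explicitly perturb). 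Once the seed is pinned down the rest is a one-line "finitely many roots of finitely many nonzero analytic functions" argument, and the lemma follows.
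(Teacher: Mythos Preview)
Your high-level idea---reduce circulant Hankel matrices to ordinary circulants, use the DFT description of real orthogonal circulants, and then argue by genericity that the zero-entry conditions cut out a proper subset---is a plausible existence strategy and quite different from the paper's approach. The paper is entirely explicit: it writes down the circulant Hankel $H$ with first row $(1,2,2^{2},\ldots,2^{n-2},w)$ for $w=-\tfrac{2}{3}(2^{n-2}-1)$, verifies row-orthogonality by a direct geometric-series computation, normalizes, and reads off the nonvanishing of the entries and of their consecutive differences from the explicit form of the first row.

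Your execution, however, has a genuine gap at the construction of the family. The matrix $C(\theta)=\cos\theta\,I+\sin\theta\,R$ is \emph{not} orthogonal for a general real orthogonal circulant $R$: one has $C(\theta)^{T}C(\theta)=I+\sin\theta\cos\theta\,(R+R^{T})$, which equals $I$ for all $\theta$ only if $R$ is skew-symmetric; but a real skew-symmetric orthogonal matrix satisfies $R^{2}=-I$, which is impossible when $n$ is odd (take determinants). Your alternative seed---a first row $\tfrac{1}{\sqrt{n}}(\pm1,\ldots,\pm1)$ with unimodular DFT---amounts to asking for a circulant Hadamard matrix, conjecturally nonexistent for $n>4$. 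The remaining phrase ``products of basic reflections'' is not concrete enough to carry the argument: you still owe the verification that none of the entry functions $c_k$ or $c_k-c_{k-1}$ vanishes identically on the orthogonal-circulant group (they do not---the shift $P^{k}$ already witnesses $c_k\neq 0$---but that is precisely the step you skipped). A minor additional slip: from $PH=HP^{T}$ it does not follow that $P^{-1}H$ is a standard circulant; the bridge from circulant Hankel to circulant is the reversal permutation, not the cyclic shift. As written, the proposal does not actually exhibit a single orthogonal circulant Hankel matrix with the required nonvanishing properties.
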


\begin{proof}
Let $H$ be a circulant Hankel matrix with first row $\mathbf{a}=(1,2,2^2,\ldots,2^{n-2},w)$ and $w=-\frac{2}{3}\left(2^{n-2}-1\right)$.
We claim that $H$ has orthogonal rows. Since $H$ is circulant, it is enough to show that the first row of $H$ is orthogonal
to every other row of $H$. If $\mathbf{b}$ is row $(k+1)$
of $H$, $1\leq k\leq n-1$, then
$\mathbf{b}=(2^k,2^{k+1},\ldots, 2^{n-2},w,1,2,\ldots,2^{k-1})$ and 
%$$
\begin{eqnarray*}
\mathbf{a}\mathbf{b}^T&=&\sum_{i=0}^{n-k-2}2^{k+2i} + 2^{n-k-1}w+\sum_{i=0}^{k-2}2^{n-k+2i}+2^{k-1}w \\ 
 &=& 2^k\left(\frac{4^{n-k-1}-1}{3}\right)+w2^{k-1}\left[2^{n-2k}+1\right]+2^{n-k}\left(\frac{4^{k-1}-1}{3}\right)\\
 &=&\frac{1}{3}\left[2^{2n-k-2}-2^k+2^{n+k-2}-2^{n-k}\right] + w2^{k-1}\left[2^{n-2k}+1\right]\\
 &=&\frac{1}{3}\left[2^{n-k}+2^k\right]\left[2^{n-2}-1\right] +w2^{k-1}\left[2^{n-2k}+1\right]=0.
\end{eqnarray*}
%$$
Thus $H^2=\lambda I$ with $\lambda=||\mathbf{a}||^2=w^2+\sum_{i=0}^{n-2}2^{2i}$.
Therefore $A=\frac{1}{\sqrt{\lambda}} H$ is an orthogonal circulant Hankel matrix with no zero entries. 
The fact that $A-PA$ has no zero entries follows from the fact that 
 $\mathbf{a}_i\neq\mathbf{a}_{i+1}$ for $1 \leq i < n$
 where ${\bf a}_i$ denotes 
 row $i$ 
 of $H$.
\end{proof}

\begin{theorem}\label{thm:mrankmobius}
	Given $n\geq 3$, $M(K_n\mob C_4) = 2n$.
\end{theorem}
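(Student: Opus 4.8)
The plan is to pair the inequality already available with one explicit matrix. By Theorems~\ref{Mbound} and~\ref{thm:mobK} we have $M(K_n\mob C_4)\le Z(K_n\mob C_4)=2n$, so it suffices to exhibit a matrix $A\in\mathcal S(K_n\mob C_4)$ with ${\rm nullity}(A)\ge 2n$, i.e.\ with ${\rm rank}(A)\le 2n$. Order the $4n$ vertices copy by copy and view a symmetric matrix on this vertex set as a $4\times 4$ array $A=(A_{ij})$ of $n\times n$ blocks. Such an $A$ lies in $\mathcal S(K_n\mob C_4)$ exactly when: $A_{11},A_{22},A_{33},A_{44}$ have all off-diagonal entries nonzero (the four copies of $K_n$); $A_{12},A_{23},A_{34}$ are diagonal with nonzero diagonal (the straight rungs); $A_{14}$ has the nonzero pattern of the cyclic shift $P$ from Lemma~\ref{lem:Hank} (the twist $x_{i,4}\sim x_{i+1,1}$); $A_{13}=A_{24}=0$; and $A=A^{T}$.

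I would build all of this from the orthogonal circulant Hankel matrix $R$ furnished by Lemma~\ref{lem:Hank}, using that $R=R^{T}$, $R^{2}=I$, that $PR=RP^{T}$ (hence $PRP=R$ and $RP=P^{T}R$) by the remark preceding Lemma~\ref{lem:Hank}, and that neither $R$ nor $R-PR$ has a zero entry; since $R$ is Hankel the latter is equivalent to any two cyclically consecutive entries in a row of $R$ being distinct. Concretely, take
\[A=\begin{pmatrix}R&I&0&P\\I&R-RP&I&0\\0&I&-P^{T}R&I\\P^{T}&0&I&RP^{2}-RP\end{pmatrix}.\]
Using $RP=P^{T}R$ one checks $A=A^{T}$; moreover the diagonal blocks are $R$, the matrix with $(i,j)$ entry $R_{ij}-R_{i,j+1}$, a cyclic row-shift of $-R$, and the matrix with $(i,j)$ entry $R_{i,j+2}-R_{i,j+1}$, so the two conclusions of Lemma~\ref{lem:Hank} guarantee none of them has a zero entry. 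Hence $A\in\mathcal S(K_n\mob C_4)$.

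The rank bound I would get from the factorization
\[A=\begin{pmatrix}R&P\\I&0\\0&I\\P^{T}&RP^{2}-RP\end{pmatrix}\begin{pmatrix}I&R-RP&I&0\\0&I&-P^{T}R&I\end{pmatrix},\]
verified block by block using only $R^{2}=I$ and $RP=P^{T}R$ (for instance the $(4,3)$ block collapses, via $RP^{2}P^{T}R=P^{T}$ and $RPP^{T}R=I$, to $P^{T}-P^{T}+I=I$, as required). Since the left factor has $2n$ columns, ${\rm rank}(A)\le 2n$, so ${\rm nullity}(A)\ge 2n$ and $M(K_n\mob C_4)\ge 2n$; combined with the bound from the first paragraph this gives $M(K_n\mob C_4)=2n$.

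The substantive step is finding the diagonal blocks. Because the twist between copies $1$ and $4$ destroys the symmetry among the four copies, one cannot take a single matrix on the whole diagonal; demanding that $Av=0$ have a $2n$-dimensional solution space (solve the first two block rows for $v_2,v_3$ in terms of $v_1,v_4$ and require the last two block rows to vanish identically) forces $A_{22}A_{11}=A_{33}A_{22}=I-P^{T}$, $A_{33}=-P^{T}A_{11}$ and $A_{44}=-A_{22}P$. These are simultaneously solvable with all four diagonal blocks entrywise nonzero precisely because $R$ satisfies $PR=RP^{T}$ and has $R,\,R-PR$ free of zero entries (which lets $A_{11}=R$ work); everything afterward is the bookkeeping indicated above.
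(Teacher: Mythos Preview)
Your argument is correct and follows essentially the same construction as the paper: your matrix $A$ is, up to swapping $P\leftrightarrow P^{T}$ (an isomorphism of the graph), exactly the paper's matrix $K$ with $B=A-PA$, and your rank-$2n$ factorization is the flip side of the paper's row reduction $EK$ that kills the first and last block rows. The only cosmetic difference is that you exhibit the factorization $A=LM$ directly rather than clearing rows with an invertible $E$, but the surviving $2n\times 4n$ block is the same in both.
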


\begin{proof}
	Let $A$ be an $n \times n$ orthogonal circulant Hankel matrix as in Lemma~\ref{lem:Hank}. 
Let $$K=\left[ 
\begin{array}{cccc}
A&I&O&P^T\\
I&B&I&O\\
O&I&-PA&I\\
P&O&I&-PB
\end{array}
\right]$$	
with $B=A-PA$. Note that $B$ is a circulant Hankel matrix and  $AB=I-P^T$ since $PA=AP^T$. 	
Also, $BA=I-P$ since $A$ and $B$ are symmetric. Let $$E=
\left[
\begin{array}{cccc}
I&-A&-P^T&O\\
O&I&O&O\\
O&O&I&O\\
O&-P&PB&I
\end{array}
\right].$$
Then, using the fact that $AB=I-P^T$ and $BA=I-P$, %using Lemma~\ref{lem:Hank}, we get 
$$EK=\left[
\begin{array}{cccc}
O&O&O&O\\
I&B&I&O\\
O&I&-PA&I\\
O&O&O&O
\end{array}
\right].$$ Since $E$ is invertible, it follows that nulllity$(K)\geq 2n$.	
Note that $K$ is a symmetric matrix with 
graph $K_n\mob C_4$, since $A$, $B$, $PA$ and $PB$ are symmetric matrices with no zero entries. Therefore, $M(K_n\mob C_4)\geq 2n$ and thus by Theorem~\ref{thm:mobK}, $M(K_n\mob C_4)=2n$. 
	\end{proof}

Since $K_n\mob C_4$ is the circulant $C_{4n}(1,4,8,\ldots,4b)$ with $b = \floor{\frac{n}{2}}$,
we have the following:

\begin{corollary}\label{m=4}
If $G = C_{4n}(1,4,8,\ldots,4b)$ with $b = \floor{\frac{n}{2}}$, then $M(G)=Z(G)=2n$.		\end{corollary}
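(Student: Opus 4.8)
The plan is to obtain this corollary as a direct synthesis of results already proved for this family, so essentially no new argument is needed. First I would recall the isomorphism established inside the proof of Theorem~\ref{thm:mobC}: for every $m\geq 3$ the circulant graph $C_{nm}(1,m,2m,\ldots,bm)$ with $b=\floor{\frac{n}{2}}$ decomposes as the edge-union of the Hamilton cycle $C_{nm}(1)\cong C_{nm}$ with the disjoint union $C_{nm}(m,2m,\ldots,bm)\cong mC_n(1,2,\ldots,b)=mK_n$ (the second isomorphism coming from Theorem~\ref{thm:discon}), and that this union realizes precisely the torus product $K_n\mob C_m$. Taking $m=4$ identifies $G=C_{4n}(1,4,8,\ldots,4b)$ with $K_n\mob C_4$; the hypothesis $n\geq 3$ is exactly what is needed for $K_n\mob C_4$ to be defined and for Theorem~\ref{thm:mrankmobius} to apply.

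With this isomorphism in hand, both halves of the claim are immediate. Since $m=4\geq 4$, Theorem~\ref{thm:mobC} (equivalently, Theorem~\ref{thm:mobK} applied through the isomorphism) gives $Z(G)=2n$. For the maximum nullity, Theorem~\ref{thm:mrankmobius} gives $M(K_n\mob C_4)=2n$, and transporting this equality across the graph isomorphism gives $M(G)=2n$. Combining the two yields $M(G)=Z(G)=2n$.

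Because every ingredient has already been supplied, there is no real obstacle here; the statement is essentially bookkeeping. The one point that warrants a second glance --- and which is settled in the proof of Theorem~\ref{thm:mobC} --- is checking that the single ``short'' jump of length $1$ in $C_{4n}(1,4,8,\ldots,4b)$ stitches the four copies of $K_n$ together in the cyclic-with-a-twist pattern that defines $\mob$, and not in the untwisted pattern of $K_n\sq C_4$. Once that is granted, the corollary follows simply by quoting Theorems~\ref{thm:mobC} and~\ref{thm:mrankmobius}.
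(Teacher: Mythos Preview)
Your proposal is correct and follows essentially the same route as the paper: the paper simply notes that $K_n\mob C_4$ is the circulant $C_{4n}(1,4,8,\ldots,4b)$ (the isomorphism established in the proof of Theorem~\ref{thm:mobC}) and then reads off $M(G)=2n$ from Theorem~\ref{thm:mrankmobius} and $Z(G)=2n$ from Theorem~\ref{thm:mobK}. Your write-up just makes these citations explicit.
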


\begin{theorem}\label{thm:KnC6}
	Given $n\geq 3$, $M(K_n\mob C_6) = 2n$.
	\end{theorem}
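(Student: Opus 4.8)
The plan is to mimic the proof of Theorem~\ref{thm:mrankmobius} and produce an explicit $6n\times 6n$ symmetric matrix $K$ with graph $K_n\mob C_6$ whose nullity is at least $2n$; since Theorem~\ref{thm:mobK} gives $Z(K_n\mob C_6)=2n$ and $M\le Z$, this forces $M(K_n\mob C_6)=2n$ and in fact $M=Z$. The matrix $K$ should be a $6\times 6$ block matrix whose block structure matches the torus product: the diagonal blocks are $n\times n$ symmetric matrices with no zero off-diagonal entries (so each block gives a copy of $K_n$ with free diagonal), the super/sub-diagonal blocks in positions $(i,i+1)$ are the identity $I$ (the edges of the path part of $C_6$), and the corner blocks in positions $(1,6)$ and $(6,1)$ carry the ``twist'': $P^T$ and $P$ respectively, where $P$ is the cyclic shift from the excerpt. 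The key is to choose the six diagonal blocks cleverly so that $K$ has a large kernel.

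First I would take $A$ to be an orthogonal circulant Hankel matrix as in Lemma~\ref{lem:Hank}, so that $A^2=I$, $PA=AP^T$, and both $A$ and $B:=A-PA$ have no zero entries, with $AB=BA^{\,\mathsf T}$... more precisely $AB=I-P^T$ and $BA=I-P$ as computed in the proof of Theorem~\ref{thm:mrankmobius}. The natural guess is to alternate the diagonal blocks as $A,B,-PA,-PB,\,?\,,\,?$ around the six-cycle, completing the pattern so that consecutive products telescope. Following the $C_4$ case, where $EK$ had two zero block-rows, I would look for an invertible $6\times 6$ block matrix $E$ (upper/lower triangular in the appropriate block sense, built from $A$, $B$, $P$, and their transposes) such that $EK$ has two identically zero block-rows; each zero block-row contributes $n$ to the nullity, giving nullity$(K)\ge 2n$. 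The diagonal blocks must be arranged so that going around the hexagon the "cancellation" relations $AB=I-P^T$, $BA=I-P$, $A^2=I$ match up with the corner twist $P,P^T$; this is where the choice of the remaining two diagonal blocks (likely again of the form $\pm PA$, $\pm PB$ or $\pm A$, $\pm B$ up to signs dictated by the two extra edges) gets pinned down.

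The main obstacle is combinatorial bookkeeping: finding the correct six diagonal blocks and the correct eliminator $E$ so that the telescoping works all the way around the $6$-cycle \emph{including} the twisted corner. In the $C_4$ case the cycle is short enough that a single pair of eliminations suffices; for $C_6$ one must be careful that the row operations used to zero out block-row $1$ do not destroy the cancellation needed to zero out another block-row, and that the twist contributes the right sign. I would verify the identity $EK=\big[\text{two zero block-rows among six}\big]$ by direct block multiplication, using only $A^2=I$, $B=A-PA$, $PA=AP^T$, $AB=I-P^T$, $BA=I-P$, and $P P^T = I$; no further properties of $A$ are needed. Finally, since each diagonal block and each off-diagonal block that is supposed to represent an edge has no zero entry (the blocks $A$, $B$, $PA$, $PB$ inherit this from Lemma~\ref{lem:Hank}, and the identity and shift blocks are exactly the adjacency pattern of $C_6$), the graph of $K$ is precisely $K_n\mob C_6$, so $M(K_n\mob C_6)\ge 2n$, and Theorem~\ref{thm:mobK} closes the argument.

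As a corollary one then obtains $M(G)=Z(G)=2n$ for $G=C_{6n}(1,6,12,\ldots,6b)$ with $b=\floor{\frac n2}$, completing the $m=6$ case of the expectation stated after Theorem~\ref{thm:mobC}.
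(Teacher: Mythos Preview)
Your strategy is exactly the paper's: build a $6\times 6$ block symmetric matrix $K$ with $I$'s on the super/sub-diagonals, the twist $P^T,P$ in the corners, diagonal blocks chosen from $A,PA$ (no zero entries by Lemma~\ref{lem:Hank}), and then find an invertible block-triangular $E$ so that $EK$ has two zero block-rows; combine with $Z=2n$ from Theorem~\ref{thm:mobK}. So in outline you match the paper.

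Where you drift off is the choice of diagonal blocks. You extrapolate from the $C_4$ case and reach for $B=A-PA$ and the identities $AB=I-P^{T}$, $BA=I-P$, anticipating a tricky alternating pattern $A,B,-PA,-PB,\ldots$ with ``combinatorial bookkeeping'' as the main obstacle. The paper's actual construction is considerably simpler: $B$ is not used at all. The diagonal of $K$ is just
\[
(A,\;A,\;A,\;PA,\;PA,\;PA),
\]
and the single identity that drives the cancellation is
\[
A\,(PA) \;=\; A\,A\,P^{T} \;=\; A A^{T} P^{T} \;=\; P^{T}
\]
(and its transpose $PA\cdot A = P$). With this, one elimination from block-row~2 into block-row~1 kills the $A$ and the $I$, a second elimination involving block-row~4 produces the needed $P^{T}$ to cancel the corner, and symmetrically for block-row~6; no telescoping around the hexagon is required. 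So the ``main obstacle'' you identify largely evaporates once you notice $A\cdot PA=P^{T}$, and your guessed pattern involving $B$ is a detour. Everything else in your plan---the block shape of $K$, the role of $E$, checking the graph of $K$, and the appeal to Theorem~\ref{thm:mobK}---matches the paper.
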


\begin{proof}
Let $A$ be an $n \times n$ orthogonal circulant Hankel matrix as in Lemma~\ref{lem:Hank}. Let
$$K=\left[ 
\begin{array}{cccccc}
A&I&0&0&0&P^T\\
I&A&I&0&0&0\\
0&I&A&I&0&0\\
0&0&I&PA&I&0\\
0&0&0&I&PA&I\\
P&0&0&0&I&PA
\end{array}
\right].	
$$	
The graph of $K$ is $K_n\mob C_6$. If
$$E=\left[
\begin{array}{rrrrrr}
I&-A&0&A&-P^T&0\\
0&I&0&0&0&0\\
0&0&I&0&0&0\\
0&0&0&I&0&0\\
0&0&0&0&I&0\\
0&-P&PA&0&-PA&I
\end{array}
\right],
$$	
then 
$$EK=\left[ 
\begin{array}{cccccc}
0&0&0&0&0&0\\
I&A&I&0&0&0\\
0&I&A&I&0&0\\
0&0&I&PA&I&0\\
0&0&0&I&PA&I\\
0&0&0&0&0&0
\end{array}\right],
$$ 
noting that $APA=AAP^T=AA^TP^T=P^T.$
Since that first and last $n$ rows of $EK$ are zero, and $E$ is invertible, it follows that $M(G)\geq 2n$ and by Theorem~\ref{thm:mobK}, $M(G)=2n$.
\end{proof}

\begin{corollary}\label{m=6}
If $G=C_{6n}(1,6,12,\ldots,6\floor{\frac{n}{2}})$ %with $b = \floor{\frac{n}{2}}$, 
then $M(G)=Z(G)=2n$.
\end{corollary}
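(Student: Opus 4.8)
The plan is to mirror exactly the pattern used to deduce Corollary \ref{m=4} from Theorem \ref{thm:mrankmobius}. First I would recall the identification, established in the proof of Theorem \ref{thm:mobC}, that for $m\geq 3$ the circulant graph $C_{nm}(1,m,2m,\ldots,bm)$ with $b=\floor{\frac{n}{2}}$ is isomorphic to the torus product $K_n\mob C_m$. Applying this with $m=6$ gives $G=C_{6n}(1,6,12,\ldots,6\floor{\frac{n}{2}})\cong K_n\mob C_6$. Then Theorem \ref{thm:KnC6} yields $M(G)=M(K_n\mob C_6)=2n$, while Theorem \ref{thm:mobC} (the case $m\geq 4$) gives $Z(G)=2n$. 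Combining these equalities finishes the proof.

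Concretely, I would write: by the proof of Theorem \ref{thm:mobC}, $G = C_{6n}(1,6,12,\ldots,6\floor{\frac{n}{2}}) \cong K_n \mob C_6$; by Theorem \ref{thm:KnC6}, $M(G) = 2n$; and by Theorem \ref{thm:mobC}, $Z(G) = 2n$ (since $6 \geq 4$), so $M(G) = Z(G) = 2n$. One should double-check the boundary hypotheses: Theorem \ref{thm:mobC} requires $n\geq 3$, which matches the standing assumption $n\geq 3$ in Theorem \ref{thm:KnC6}, so there is no edge case to worry about, and the corollary is implicitly understood to be stated for $n\geq 3$ as well.

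There is essentially no obstacle here — this corollary is a direct packaging of Theorem \ref{thm:KnC6} together with the graph isomorphism and the zero forcing computation already in hand. The only thing requiring the mildest care is making the chain of identifications explicit (the isomorphism $G\cong K_n\mob C_6$, then the nullity bound, then the zero forcing value), and confirming that $Z(G)\le M(G)$ is never invoked in the wrong direction: here $M(G)\le Z(G)$ comes from Theorem \ref{Mbound}, and both are pinned to $2n$ from below (by Theorem \ref{thm:KnC6} for $M$, hence for $Z$) and from above (by the zero forcing set constructed in Theorem \ref{thm:mobK}/\ref{thm:mobC}), so they coincide. The proof is therefore a two-line citation argument.
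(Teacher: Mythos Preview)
Your proposal is correct and follows exactly the paper's approach: the corollary is stated without proof immediately after Theorem~\ref{thm:KnC6}, and is understood (just as Corollary~\ref{m=4} was) to follow by combining the isomorphism $C_{6n}(1,6,12,\ldots,6\floor{\frac{n}{2}})\cong K_n\mob C_6$ from Theorem~\ref{thm:mobC} with the nullity computation of Theorem~\ref{thm:KnC6} and the zero forcing value from Theorem~\ref{thm:mobK}.
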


\begin{remark}
Based upon Corollaries \ref{m=4} and \ref{m=6}, we wonder if 
%$propose the %following
%conjecture:
%
%\begin{conjecture}
%If 
$G = C_{mn}(1,m,2m,\ldots,m\floor{\frac{n}{2}})$ implies
%with $b = %\lfloor \frac{n}{2}
%\rfloor$, 
%then 
$M(G)=Z(G)$
in general.
%\end{conjecture}
\end{remark}
There is another circulant graph that is isomorphic to a torus 
%quasi-M\"obius 
product.
In particular, the graph $C_{nm}(1,m)\cong C_n\mob C_m$. (Note that 
%quasi-M\"obius products are
for $m\neq n$, $C_m\mob C_n$ is not isomorphic to $C_n\mob C_m$.) 
 
 The following theorem mimics a result \cite{Betal} on the
 zero forcing number of the Cartesian product $C_n \sq C_m$, which is not
 surprising since locally,  the graph is the same as $C_n\mob C_m$.  In particular,
 it was shown in \cite{Betal} that, with $n\geq m\geq 3$, then $M(C_m\sq C_m)=Z(C_m\sq C_m)= 2m-1$ if $m$ is odd,
 and otherwise $M(C_n\sq C_m)=Z(C_n\sq C_m)= 2m$.
 
 We do not know if $M(G)=Z(G)$ in general for $G=C_n\mob C_m$,
 but the zero forcing number is bounded above in same was as the 
 Cartesian product, except when $m=n$ and  $m$ is even.
 In this case, $Z(C_m\mob C_m)<Z(C_m\sq C_m)$. The argument
 is similar to that in \cite[Theorem 2.18]{deAlbaetal}.
 
 \begin{theorem}\label{mobCC}
 	Suppose $n\geq 3$ and $m\geq 3$. Then
 	$Z(C_n\mob C_m) \leq  
 	\begin{cases} 
 	2 \min\left\{ n,m\right\} &  \mbox{if} ~~m\neq n \\
 	2m-1 &  \mbox{if} ~~m=n. 
 	\end{cases}$
\end{theorem}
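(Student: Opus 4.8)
The plan is to prove both bounds by exhibiting explicit zero forcing sets. Throughout, label the vertices $x_{k,i}$ ($1\le k\le n$, $1\le i\le m$) as in the definition of the torus product, so that the ``columns'' $G_i=\{x_{1,i},\dots,x_{n,i}\}$ are $m$ copies of $C_n$, every ``row'' $\{x_{k,1},\dots,x_{k,m}\}$ is a copy of $P_m$, the row edges together with the twisted edges $\{x_{k,m},x_{k+1,1}\}$ form a single spanning cycle $C_{nm}$, and every edge of $C_n\mob C_m$ joins two vertices in the same row or in two consecutive rows (row indices taken mod $n$, so that $G_m$ is joined to $G_1$ by the twisted matching $x_{k,m}\mapsto x_{k+1,1}$). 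As in \cite{Betal}, the key point is that locally $C_n\mob C_m$ looks like the grid $C_n\sq C_m$, with extra room coming from the twist.

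First I would treat $m\ne n$. I claim that each of (i) a union $G_i\cup G_{i+1}$ of two consecutive columns and (ii) a union of two consecutive rows is a zero forcing set; taking whichever is smaller gives $Z(C_n\mob C_m)\le\min\{2n,2m\}=2\min\{n,m\}$. For (i): once $G_1\cup G_2$ is filled, every $x_{k,2}$ has its three neighbours $x_{k-1,2},x_{k+1,2},x_{k,1}$ filled, so it forces $x_{k,3}$; hence $G_3$ fills, and iterating fills $G_4,\dots,G_m$ and hence all of $G$ (the twisted edges are never used). For (ii): once rows $1$ and $2$ are filled, each $x_{2,i}$ with $2\le i\le m-1$ forces $x_{3,i}$ and $x_{2,1}$ forces $x_{3,1}$ (its fourth neighbour $x_{1,m}$ being in row $1$); then $x_{2,m}$, whose remaining neighbour $x_{3,1}$ has just been filled, forces $x_{3,m}$, so row $3$ fills; iterating through rows $4,\dots,n$ fills $G$.

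For $m=n$ I would improve the ``two consecutive rows'' set by deleting one vertex. Put
\[
F=\{\,x_{1,i}~|~1\le i\le n\,\}\ \cup\ \{\,x_{2,i}~|~1\le i\le n-1\,\},
\]
so $|F|=2n-1$. Since $F\cup\{x_{2,n}\}$ is the union of rows $1$ and $2$, which is a zero forcing set by the argument in (ii), it is enough to show that the final filling of $F$ contains $x_{2,n}$: at that moment rows $1$ and $2$ are completely filled and the rest is forced exactly as in (ii). Starting from $F$, each $x_{1,i}$ with $2\le i\le n-1$ forces $x_{n,i}$, and each $x_{2,i}$ with $1\le i\le n-2$ forces $x_{3,i}$. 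One then uses that, because $m=n$, each column $G_i$ with $2\le i\le n-1$ is a cycle $C_n$ containing the already-filled vertex $x_{1,i}$, and that the twisted edge $\{x_{2,n},x_{3,1}\}$ joins the ``missing corner'' $x_{2,n}$ to row $3$: continuing the forcing, the partially filled rows $3$ and $n$ propagate toward the middle rows while the vertices near columns $1$ and $n$ get filled, and after finitely many steps one of the four neighbours of $x_{2,n}$ (namely $x_{3,1}$, $x_{1,n}$, $x_{2,n-1}$ or $x_{3,n}$) forces it. This gives $Z(C_n\mob C_n)\le 2n-1$, and parallels \cite[Theorem 2.18]{deAlbaetal}.

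The main obstacle is the bookkeeping in the $m=n$ case. For the two-full-columns set the forcing is a clean left-to-right sweep, but after deleting $x_{2,n}$ the naive propagation creates a hole that widens by two columns at each step and would pinch off before reaching the middle rows; the content of the argument is that the cyclic structure of the columns $G_i$ — each meeting the fully filled first row — together with the single twisted rung ``heals'' this hole and forces $x_{2,n}$. Verifying this requires a careful check of the forcing order rather than a one-line sweep, and it is precisely this healing, absent in the Cartesian product $C_n\sq C_n$, that makes $2n-1$ rather than $2n$ attainable when $m=n$ (the bound $2n-1$ coincides with the Cartesian value when $n$ is odd and is strictly smaller when $n$ is even).
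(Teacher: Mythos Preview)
Your treatment of the case $m\neq n$ matches the paper's: two consecutive columns or two consecutive rows each force $V(G)$, and one takes whichever is smaller.

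For $m=n$ you choose a \emph{different} zero forcing set than the paper. You remove one vertex from two consecutive \emph{rows}; the paper removes one vertex from two consecutive \emph{columns}, taking all of column $k=\lceil m/2\rceil$ together with $\{x_{2,k+1},\dots,x_{m,k+1}\}$. The paper's choice exploits the fact that the columns are genuine copies of $C_n$: the narrowing propagates symmetrically outward from columns $k,k{+}1$, with column $k\pm j$ receiving exactly rows $j{+}1$ through $n{-}j{+}1$. When the process reaches columns $1$ and $m$, rows $k$ and $k{+}1$ are completely filled if $n$ is odd, while if $n$ is even they are one vertex short and a single twisted edge $\{x_{k+1,1},x_{k,m}\}$ completes them; in either case two full consecutive rows then finish the job. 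Every step can be read off directly.

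Your row-based set is also a zero forcing set --- one can check that the two sweeps (down from row~$2$, up from row~$1$) meet so as to fill column $\lfloor n/2\rfloor$ and, for odd $n$, also column $\lceil n/2\rceil$; for even $n$ the single full column then forces the two missing vertices in its neighbouring columns --- but you have not actually carried this out. The sentence ``continuing the forcing \dots\ after finitely many steps one of the four neighbours of $x_{2,n}$ forces it'' is an assertion, not a proof, and you yourself write that ``verifying this requires a careful check of the forcing order.'' That is the gap. Either supply the explicit column-by-column accounting (doable, with an odd/even split mirroring the paper's), or switch to the paper's column-based set, whose symmetry under $i\mapsto n{+}2{-}i$ makes the bookkeeping a one-paragraph computation rather than an appeal to healing.
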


\begin{proof} Let $G=C_n\mob C_m$. First note that if two consecutive columns of vertices of  
	$G$ are in a set $F$, then $F$ is a zero forcing set of
	 $G$. In particular, each of these two columns can force all the vertices on a neighbouring
	 column. Likewise, if two consecutive rows of $G$ are in some set $F$, then $F$
	 is a zero forcing set. In particular, suppose two consecutive rows of vertices of $G$ are in $F$. 
	 By symmetry, one can assume the first two rows of $G$ are in $F$.
	 In this case, one can force the whole third row, left to right: in particular, 
	 $x_{2,j}$ can force $x_{3,j}$ as $j$ ranges from 1 to $m$. (Note that 
	 $x_{2,m}$ cannot force $x_{3,m}$ until $x_{3,1}$ has been filled.)
	 Consequently, each of the subsequent rows can also be forced. 
	 Therefore, $Z(G)\leq 2 \min\left\{ m,n\right\}.$
	 
	 Now suppose $m=n$. Let $k=\lceil \frac{m}{2} \rceil$. 
	 Suppose $F$ consists of the $m$ vertices of column $k$, and
	 $m-1$ vertices of column $k+1$, namely $\{ x_{2,(k+1)},\ldots,x_{m,(k+1)}\}$.  
	 Then column $k$ can force $m-1$ vertices in column $k-1$, in rows 2 through $m$. 
	 Now the two columns with $m-1$ vertices can each force
	 $m-3$ vertices in the columns $k+2$ and $k-2$ respectively, namely in rows 3 though $m-1$. This can
	 be repeated, filling two less vertices in each column until
	 the end columns are reached. 
	 If $m$ is odd, then the forcing above will result in the two vertices
	 in each of columns 1 and $m$ being filled. In particular, rows $k$ and $k+1$
	 will be completely filled, and so the resulting set will force the remaining
	 rows to be filled, as noted at the beginning of the proof. 
	 If $m$ is even, then column 1 will have the three vertices $x_{k,1}, x_{(k+1),1},$ and $x_{(k+2),1}$ filled
	 but column $m$ will only have vertex $x_{(k+1),m}$ filled. However, $x_{(k+1),1}$ can force $x_{k,m}$.
	 At this point, rows $k$ and $k+1$ are completely filled, and so the remainder of the
	 vertices can be forced. Therefore, $Z(C_m\mob C_m)\leq 2m-1$.
	\end{proof}

	\begin{corollary}\label{cor:Cmn}
		If $n,m\geq 3$,  then $Z(C_{m^2}(1,m))\leq 2m-1$,  and if $n\neq m$, then 
		$Z(C_{nm}(1,t))\leq 2 \min\left\{m, n\right\} $
	for each $t\in \{m,n\}.$		
		\end{corollary}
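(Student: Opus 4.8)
The plan is to deduce Corollary~\ref{cor:Cmn} directly from Theorem~\ref{mobCC} by means of the isomorphism $C_{nm}(1,m)\cong C_n\mob C_m$ recorded just before that theorem. The only thing requiring attention is the index bookkeeping: for a circulant of the form $C_N(1,t)$ one must identify which of the two parameters $t$ and $N/t$ is the cycle length of each ``column'' copy and which counts the copies.

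First I would note the general fact that if $t\mid N$ and $s=N/t$, then by Theorem~\ref{thm:discon} we have $C_N(t)\cong tC_s$, and adjoining the edges contributed by the generator $1$ links these $t$ disjoint copies of $C_s$ exactly as in the torus product; that is, $C_N(1,t)\cong C_{N/t}\mob C_t$. This is precisely the statement $C_{nm}(1,m)\cong C_n\mob C_m$ applied with $N=nm$. I would also record that $C_{nm}(1,t)$ really is a circulant graph for $t\in\{m,n\}$: since $n,m\geq 3$ we have $\max\{m,n\}\leq nm/3<nm/2$, so $t\leq\floor{nm/2}$, and both cycle factors of every torus product below have at least $3$ vertices, as Theorem~\ref{mobCC} requires.

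Given this, the corollary follows at once. For the first assertion, put $N=m^2$ and $t=m$ to get $C_{m^2}(1,m)\cong C_m\mob C_m$, and apply the $m=n$ case of Theorem~\ref{mobCC} to obtain $Z(C_{m^2}(1,m))\leq 2m-1$. For the second assertion, assume $n\neq m$; if $t=m$ then $C_{nm}(1,m)\cong C_n\mob C_m$, while if $t=n$ then $C_{nm}(1,n)\cong C_m\mob C_n$, so in either case the two cycle factors are $C_n$ and $C_m$ with distinct lengths, and the first case of Theorem~\ref{mobCC} gives $Z\leq 2\min\{n,m\}$. I do not anticipate a genuine obstacle: the substance is entirely contained in Theorem~\ref{mobCC}, and the only place to be careful is the $t=n$ subcase, where the roles of $m$ and $n$ are interchanged relative to the displayed form of the isomorphism.
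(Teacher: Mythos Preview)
Your proposal is correct and follows exactly the route the paper intends: the corollary is stated without proof in the paper, relying implicitly on the isomorphism $C_{nm}(1,m)\cong C_n\mob C_m$ recorded just before Theorem~\ref{mobCC} together with that theorem. Your version is simply a more explicit unpacking of the same argument, including the careful check that $t\leq\lfloor nm/2\rfloor$ and that both cycle factors have length at least~$3$, which the paper leaves to the reader.
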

	
	Corollary~\ref{cor:Cmn} deals with the case that
	$n\geq 3.$ (An earlier version of the $C_{m^2}(1,m)$ case is found in \cite{LD}.) For the case with $n=2$, see 
	Theorem~\ref{2n(1,n)}.
			Determining $M(G)$ for the graphs $G$ in Corollary~\ref{cor:Cmn} does not seem to be straightforward. For the case
	with $m=n=3$, the following theorem demonstrates that  $M(G)=Z(G)$.

	\begin{theorem}\label{Z(G)=5}
		If $G=C_{9}(1,3)$, then $M(G)=Z(G)=5$.
				\end{theorem}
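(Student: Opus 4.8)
The plan is to compute $Z(C_9(1,3))$ and $M(C_9(1,3))$ separately and observe that both equal $5$. Since $C_9(1,3)=C_{3\cdot 3}(1,3)$, the equality $Z(C_9(1,3))=5$ is the $m=3$ (hence $n=3$, $b=1$) instance of Theorem~\ref{thm:mobC}; alternatively $C_9(1,3)\cong K_3\mob C_3$, so $Z=2\cdot 3-1=5$ by the $m=3$ case of Theorem~\ref{thm:mobK}, and the upper bound $Z\le 5$ is also recorded in Corollary~\ref{cor:Cmn}. By Theorem~\ref{Mbound}, $M(C_9(1,3))\le Z(C_9(1,3))=5$, so the entire theorem reduces to producing a single matrix $A\in\mathcal{S}(C_9(1,3))$ with ${\rm nullity}(A)\ge 5$, equivalently ${\rm rank}(A)\le 4$.

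To build such an $A$, I would exploit the reflection automorphism $\rho$ of $C_9(1,3)$ given by $v_i\mapsto v_{-i}$ (indices mod $9$), which fixes $v_0$ and swaps $v_i\leftrightarrow v_{9-i}$, and restrict the search to $\rho$-invariant matrices. Any such $A$ preserves the $5$-dimensional subspace $V_+=\operatorname{span}\{e_0,\,e_1+e_8,\,e_2+e_7,\,e_3+e_6,\,e_4+e_5\}$ and the $4$-dimensional subspace $V_-=\operatorname{span}\{e_1-e_8,\,e_2-e_7,\,e_3-e_6,\,e_4-e_5\}$, and hence block-decomposes as $A\cong A_+\oplus A_-$ with $A_+$ of size $5$, $A_-$ of size $4$, and ${\rm nullity}(A)={\rm nullity}(A_+)+{\rm nullity}(A_-)$. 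So it suffices to choose the five independent diagonal entries and the ten independent edge-weights of $A$ (five for the edges $\{v_i,v_{i\pm 1}\}$ and five for $\{v_i,v_{i\pm 3}\}$, after accounting for the $\rho$-action) so that ${\rm rank}(A_+)=2$ and ${\rm rank}(A_-)=2$. The zeros forced in $A_+$ and $A_-$ land in the correct places because the non-edges of $C_9(1,3)$ form $\rho$-orbits, and the only side condition is that the ten edge-weights remain nonzero (note that an entry of $A_\pm$ such as a sum $b_i+c_j$ or difference $b_i-c_j$ of these weights may freely vanish without violating this). Writing $A_+$ and $A_-$ out explicitly in the bases above is routine, and one is then left with a system of polynomial rank conditions in the fifteen parameters; since there are far more parameters than independent conditions, the system has solutions, and I would record one explicit solution with small rational entries and verify ${\rm rank}(A)=4$ directly.

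I expect the one genuinely fiddly point to be meeting the rank-$2$ conditions on $A_+$ (size $5$) and on $A_-$ (size $4$) simultaneously, since the two blocks share the common edge-weights and each $\{v_i,v_{i\pm 3}\}$-weight enters $A_+$ and $A_-$ with opposite sign; in practice this is a short elimination, or one simply row-reduces a guessed matrix (in the spirit of Theorems~\ref{thm:mrankmobius} and~\ref{thm:KnC6}, where an invertible $E$ is exhibited with $EA$ having the appropriate number of zero rows). It is worth noting why no cheaper argument is available: a symmetric circulant matrix in $\mathcal{S}(C_9(1,3))$ has eigenvalue multiplicity pattern $1,2,2,2,2$, and no choice of its (necessarily nonzero) off-diagonal values makes five of its eigenvalues vanish — forcing the singleton eigenvalue together with two of the double eigenvalues to be zero already forces one of the $\{v_i,v_{i\pm 1}\}$-weights to vanish. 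Hence any rank-$4$ witness must break the full rotational symmetry of the graph, which is exactly why the circulant-Hankel constructions of Theorems~\ref{thm:mrankmobius} and~\ref{thm:KnC6} (designed for $m=4,6$) do not carry over verbatim. Once $A$ is in hand, we conclude $5={\rm nullity}(A)\le M(C_9(1,3))\le Z(C_9(1,3))=5$, so $M(C_9(1,3))=Z(C_9(1,3))=5$.
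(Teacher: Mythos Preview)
Your overall plan matches the paper's: establish $Z(G)=5$ from earlier results (the paper cites Corollary~\ref{cor:Cmn} for $Z(G)\le 5$; you correctly observe that Theorem~\ref{thm:mobC} already gives the full equality), and then exhibit a matrix $A\in\mathcal{S}(G)$ of rank~$4$ to force $M(G)\ge 5$. The paper does exactly this, simply writing down an explicit $9\times 9$ rational matrix (organised in $3\times 3$ blocks reflecting the $K_3\mob C_3$ structure) and stating that its rank is~$4$; no symmetry reduction is invoked, and in fact the paper's witness is \emph{not} $\rho$-invariant.

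Your reflection-symmetry reduction to the pair $(A_+,A_-)$ is a reasonable search strategy and would lead to a different witness, but as written it is not yet a proof. The sentence ``since there are far more parameters than independent conditions, the system has solutions'' is only a heuristic dimension count; it does not rule out the possibility that every simultaneous solution of $\mathrm{rank}(A_+)\le 2$ and $\mathrm{rank}(A_-)\le 2$ forces some edge-weight to vanish (you yourself identify precisely this failure mode when explaining why a fully circulant witness cannot exist). To complete the argument you must actually solve the system and display the matrix---or give a rigorous existence argument---just as the paper does by brute exhibition. Until that step is carried out, the lower bound $M(G)\ge 5$ remains unproved in your proposal.
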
 

\begin{proof} Let $G=C_9(1,3)$. 
	By Corollary~\ref{cor:Cmn}, $Z(G)\leq 5$. By Theorem~\ref{Mbound}, it is enough to 
	show that $M(G)\geq 5.$ Let 
	$$A=\left[ \arraycolsep=1.4pt\def\arraystretch{1.4}
	\begin{tiny}
	\begin{array}{rrr|rrr|rrr} 
	-\frac{1}{8}&\frac{3}{4}&-\frac{1}{2}&1&0&0&0&1&0\\
	\frac{3}{4}&-2&\frac{1}{2}&0&1&0&0&0&1\\
	-\frac{1}{2}&\frac{1}{2}&-\frac{3}{4}&0&0&1&1&0&0\\ \hline 
	1&0&0&\frac{48}{5}&-\frac{12}{5}&-\frac{24}{5}&-\frac{16}{5}&0&0\\
	0&1&0&-\frac{12}{5}&\frac{6}{5}&\frac{4}{5}&0&\frac{12}{5}&0\\
	0&0&1&-\frac{24}{5}&\frac{4}{5}&\frac{4}{5}&0&0&-\frac{2}{5}\\ \hline
	0&0&1&-\frac{16}{5}&0&0&-\frac{2}{5}&-\frac{4}{5}&-\frac{3}{5}\\
	1&0&0&0&\frac{12}{5}&0&-\frac{4}{5}&\frac{24}{5}&\frac{6}{5}\\
	0&1&0&0&0&-\frac{2}{5}&-\frac{3}{5}&\frac{6}{5}&-\frac{3}{10}
	\end{array}\end{tiny}
	\right].$$
Then, $A\in \mathcal{S}(G)$ and rank($A)=4$ and, therefore, $M(G)\geq 5$.
	\end{proof}

	For the family of circulant graphs of Corollary~\ref{cor:Cmn}, we can also find a lower bound
	using the following result based on the girth of a graph. The \emph{girth} of a graph $G$ is size of the smallest cycle
	in $G$.
	
	\begin{theorem}\cite{DKS} \label{girththeorem}
	Let $G$ a graph with girth $g \geq 3$ and minimal
	degree $\delta \geq 2$.  Then $$Z(G) \geq 
	(g-3)(\delta-2)+\delta.$$  
	\end{theorem}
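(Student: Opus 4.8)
The plan is to bound from below the size of an arbitrary zero forcing set $F$ of $G$. First note that when $g=3$ or $\delta=2$ the asserted inequality reduces to $Z(G)\ge\delta$, and this weaker bound always holds: fix a chronological list of forces for $F$, and let $v_0\to v_1$ be the first force; then only $F$ is filled at that instant, so $v_0\in F$ and every neighbor of $v_0$ other than $v_1$ lies in $F$. Hence $B_0:=\{v_0\}\cup(N(v_0)\setminus\{v_1\})$ is a subset of $F$ with $|B_0|\ge\delta$. From now on I would assume $g\ge4$ and $\delta\ge3$ and aim to enlarge $B_0$ inside $F$.

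Since $v_0\in F$ performs the first force, it is the head of its forcing chain, which I write $v_0,v_1,\dots,v_\ell$, so $v_i\to v_{i+1}$ for $0\le i<\ell$. For $1\le i\le\ell-1$ set $D_i:=N(v_i)\setminus\{v_{i-1},v_{i+1}\}$, a set of at least $\delta-2$ vertices, every one of which is already filled at the moment $v_i$ forces $v_{i+1}$. The goal is then to prove that (a) the path from $v_0$ may be taken long enough that its blocks $D_1,\dots,D_{g-3}$ are all defined; (b) the sets $B_0,D_1,\dots,D_{g-3}$ are pairwise disjoint; and (c) each $D_i$ is contained in $F$. Together these give
\[
|F| \ \ge\ |B_0|+\sum_{i=1}^{g-3}|D_i| \ \ge\ \delta+(g-3)(\delta-2),
\]
which is the claim.

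For disjointness (b), the girth does the work: if $w\in D_i\cap D_j$ with $1\le i<j\le g-3$, then $w$ is adjacent to both $v_i$ and $v_j$ and is distinct from $v_{i-1},v_i,v_{i+1},v_{j-1},v_j,v_{j+1}$, so the subpath $v_iv_{i+1}\cdots v_j$ together with the edges $v_jw$ and $wv_i$ produces a cycle of length at most $(j-i)+2\le g-1<g$, a contradiction; the degenerate cases (a vertex shared with $B_0$, or a vertex equal to $v_0$, which yield closed walks of length at most $g-1$, including an actual triangle when $i=1$) are ruled out the same way. For the containment (c), a vertex $w\in D_i$ that is filled by the time $v_i$ forces $v_{i+1}$ but does not belong to $F$ must have been forced earlier by some $z$ with $z\sim w$; I would trace $z$ backwards along its own forcing chain (which again begins in $F$) and combine that excursion with the path $v_0\cdots v_i$ and the edge $wv_i$ to once more build a closed walk too short for the girth, the only escape being that $z$ already lies on the chain $v_0,\dots,v_\ell$ — but then $w$ lies on the chain as well (each vertex forces only once), which forces a short chord. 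For (a), if the chain from $v_0$ stalls at $v_\ell$ before producing $g-3$ blocks, I would prolong the path past $v_\ell$ using the force that fills the last-filled neighbor of $v_\ell$, splicing on the relevant chain, and continue harvesting blocks of size at least $\delta-2$ along the extended path.

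The main obstacle is clearly (c) together with the prolongation in (a): other forcing chains may interleave in time with the chosen chain and fill vertices of some $D_i$ before $D_i$ is examined, and chords of the chain can drop forced (hence non-$F$) vertices into a $D_i$. The girth hypothesis is precisely what forbids these possibilities — each such interference manufactures a cycle of length at most $g-1$ — but converting every offending vertex into an honest short cycle, while separately treating vertices lying on the chain, its head $v_0$, and its endpoints, is where the substantive work of the proof resides.
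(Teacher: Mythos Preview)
The paper does not prove this theorem; it is quoted from \cite{DKS} and used as a black box, so there is no in-paper argument to compare your sketch against.

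As for the sketch itself, step (b) is fine, but step (c) has a genuine gap. You argue that if $w\in D_i\setminus F$, then tracing the forcing chain that filled $w$ back to its head $u\in F$, and concatenating with $v_0v_1\cdots v_i$ and the edge $v_iw$, produces a closed walk of length less than $g$. But that concatenation is a walk from $u$ to $v_0$; it is closed only when $u=v_0$ or the two chains share a vertex. Neither is forced: $w$ may be filled by a forcing chain rooted at some $u\in F$ with $u\neq v_0$ and vertex-disjoint from your chain, and then no cycle is created at all, let alone a short one. So the containment $D_i\subseteq F$ is not merely unproven --- it is false in general. For a concrete witness take the $3$-cube $Q_3$ (so $\delta=3$, $g=4$) with the face $F=\{000,001,010,011\}$. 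Order the forces as $000\to 100$, then $001\to 101$, then $100\to 110$; the chain through $v_0=000$ is $000,100,110$, and $D_1=N(100)\setminus\{000,110\}=\{101\}$. But $101\notin F$: it was forced by $001$ on a separate chain, and concatenating $001\,101\,100\,000$ gives only a path between two distinct elements of $F$, not a cycle. The bound $|F|\ge 4$ still holds, of course, but the extra vertex of $F$ you need to count is $011$, which your chain never sees.

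The prolongation step (a) inherits the same defect: once you splice onto another chain past $v_\ell$, the blocks harvested along the extension are again only known to be \emph{filled} at the relevant moment, not to lie in $F$, and your short-cycle mechanism does not recover them. The published proof in \cite{DKS} does not try to place the side-neighbors $D_i$ inside $F$; a different accounting of how chains interact during the first few rounds is required.
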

	
	The above theorem was first conjectured in \cite{DK}, and
	proved in some special cases.  The proof of Theorem \ref{girththeorem} was completed in \cite{DKS} (and see the references within this paper).  This result is now applied to $C_{mn}(1,t)$.
	
	\begin{theorem}\label{lowerbounds}
	Let $t\in\{n,m\}$ with $n,m\geq 3.$ If $G = C_{nm}(1,t)$, then 
	$Z(G) \geq \begin{cases}
	4 & \mbox{if }  3t=nm \\
	6 & \mbox{otherwise.}
	\end{cases}$
	\end{theorem}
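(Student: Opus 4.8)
The plan is to invoke Theorem~\ref{girththeorem}, so I first need the minimum degree and an estimate on the girth of $G=C_{nm}(1,t)$. Since $t\in\{n,m\}$ and $n,m\ge 3$ we have $t\ge 3$, hence $t\neq 1$; moreover $2t=nm$ would force $m=2$ (when $t=n$) or $n=2$ (when $t=m$), both excluded. Thus the connection set $\{1,t\}$ gives every vertex four distinct neighbours, so $G$ is $4$-regular and $\delta(G)=4\ge 2$. Letting $g$ denote the girth (which is well defined, since the step-$1$ edges alone form a spanning $nm$-cycle), Theorem~\ref{girththeorem} gives
\[Z(G)\ \ge\ (g-3)(\delta-2)+\delta\ =\ 2g-2.\]
As $g\ge 3$ always holds, this already yields $Z(G)\ge 4$, which settles the case $3t=nm$.

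For the case $3t\neq nm$ it remains to prove $g\ge 4$, i.e.\ that $G$ is triangle-free; then $Z(G)\ge 2\cdot 4-2=6$. In the circulant $C_N(1,t)$ with $N=nm$, a triangle is equivalent to a choice of $\sigma_1,\sigma_2,\sigma_3\in\{1,-1,t,-t\}$ with $\sigma_1+\sigma_2+\sigma_3\equiv 0\pmod N$ (the three vertices $v_0,v_{\sigma_1},v_{\sigma_1+\sigma_2}$ are automatically distinct, since none of the $\sigma_i$, and hence neither of the partial sums, is $\equiv 0\pmod N$). I would organise the search by how many of the $\sigma_i$ lie in $\{1,-1\}$: the four resulting subcases collapse the problem to ruling out, modulo $N$, the congruences $3\equiv 0$, $t\equiv\pm 2$, $2t\equiv\pm 1$, and $3t\equiv 0$. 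The first fails since $N=nm\ge 9$. The next two fail by crude size estimates, which hold precisely because $n,m\ge 3$, namely $3\le t\le\max(n,m)$, $2t\le nm-2$, and $3t\le nm$: indeed $t\equiv\pm 2\pmod N$ would force $t=2$ (impossible as $t\ge 3$) or $t=nm-2$ (impossible as $nm-2>\max(n,m)\ge t$), while $2t\equiv\pm 1\pmod N$ is impossible because $0<2t\mp 1<N$. Finally $3t\equiv 0\pmod N$ together with $0<3t\le N$ forces $3t=N=nm$, contrary to hypothesis. Hence $G$ has no triangle, $g\ge 4$, and $Z(G)\ge 6$.

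The only substantive step is the triangle case-analysis, and even there the content is bookkeeping: the elementary inequalities $2t\le nm-2$ and $3t\le nm$ (valid exactly when $n,m\ge 3$) are what eliminate the ``near-miss'' congruences $t\equiv\pm 2$ and $2t\equiv\pm 1$ and pin the congruence $3t\equiv 0\pmod{nm}$ down to the single equality $3t=nm$. I do not anticipate a genuine obstacle beyond assembling these estimates cleanly.
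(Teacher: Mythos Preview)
Your proposal is correct and follows essentially the same route as the paper: establish that $G$ is $4$-regular, observe that the girth is $3$ precisely when $3t=nm$ and is at least $4$ otherwise, and then apply Theorem~\ref{girththeorem}. The paper is terser---it simply asserts that $t\ge 3$ forces any $3$-cycle to satisfy $3t=nm$ and exhibits the $4$-cycle $\{v_0,v_1,v_{t+1},v_t\}$---whereas you carry out the triangle case-analysis in full; your detailed treatment of the congruences is sound.
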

	
	\begin{proof} Let $G = C_{nm}(1,t)$.
	The graph $G$ contains a four cycle: $\{v_0,v_1,v_{t+1},v_t\}.$
	Since, $t\geq 3$,
	in order for $G$ to have a cycle of length three, then $3t=nm$. In particular, $G$ would contain
	the $3$-cycle  $\{v_0,v_t,v_{2t}\}$. 
	%Otherwise, the girth of $G$ is four. 
	The theorem then follows from  Theorem~\ref{girththeorem} since $G$ is 
	4-regular.
	\end{proof}
	
	In the case $n=3$ the zero forcing number equals the lower bound given in Theorem \ref{lowerbounds}.

	\begin{theorem}If $G=C_{3m}(1,3)$, then
	$Z(G) = \begin{cases} 
 	6 & \mbox{if  } m>3 \\
 	5 & \mbox{if  } m=3. 
 	\end{cases}$
	\end{theorem}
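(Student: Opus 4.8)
The plan is to split into the two cases and reduce each to a result proved earlier in the paper. When $m = 3$, the graph is $G = C_9(1,3)$, which is exactly the circulant treated in Theorem~\ref{Z(G)=5}; thus $Z(G) = 5$ with nothing further to do. So the content of the statement lies in the case $m > 3$, where I must show $Z(G) = 6$.

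For the lower bound when $m > 3$, I would apply Theorem~\ref{lowerbounds} with the parameters $n = 3$ and $t = 3$, so that $G = C_{3m}(1,3) = C_{nm}(1,t)$ with $t \in \{n,m\}$ and $n,m \geq 3$. Since $m > 3$ we have $3t = 9 \neq 3m = nm$, so we are in the ``otherwise'' branch of Theorem~\ref{lowerbounds} and obtain $Z(G) \geq 6$ (this uses, as in the proof of that theorem, that $G$ is $4$-regular of girth $4$, the relevant $4$-cycle being $\{v_0,v_1,v_4,v_3\}$).

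For the upper bound when $m > 3$, I would invoke Corollary~\ref{cor:Cmn}: writing $G = C_{3m}(1,3)$ as $C_{nm}(1,t)$ with $n = 3$ and $t = 3 = n$, and noting that $3 \neq m$ precisely because $m > 3$, the corollary gives $Z(G) \leq 2\min\{3,m\} = 6$. (Alternatively one may use the isomorphism $C_{3m}(1,3) \cong C_m \mob C_3$ together with Theorem~\ref{mobCC}, which gives the same bound.) Combining with the lower bound yields $Z(G) = 6$, finishing the case $m > 3$.

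I do not anticipate any real obstacle: the only point needing care is the bookkeeping of which circulant corresponds to which parameter choice in Corollary~\ref{cor:Cmn} and Theorem~\ref{lowerbounds}, and verifying that their hypotheses ($n,m \geq 3$, $t \in \{n,m\}$) are satisfied with $n = t = 3$. It is also worth one sentence to confirm that $C_{3m}(1,3)$ is a legitimate connected circulant for every $m \geq 3$, namely that $3 \leq \floor{\frac{3m}{2}}$ and $\gcd(3m,1,3) = 1$, so that the cited results indeed apply.
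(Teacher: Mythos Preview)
Your proposal is correct and follows exactly the paper's argument: the $m=3$ case is Theorem~\ref{Z(G)=5}, and for $m>3$ the upper bound from Corollary~\ref{cor:Cmn} and the lower bound from Theorem~\ref{lowerbounds} both equal $6$. Your extra paragraphs on parameter matching and on $C_{3m}(1,3)\cong C_m\mob C_3$ are accurate but not strictly needed, since the corollary and lower-bound theorem already cover the case $t=n=3$ directly.
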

	
	\begin{proof}
	When $m=3$, the result follows from Theorem \ref{Z(G)=5}.  When $m > 3$, the upper bound
	of Corollary \ref{cor:Cmn} and the lower
	bound of Theorem \ref{lowerbounds}  agree.
	\end{proof}

In this section we considered graphs $C_{nm}(1,t)$ with $n,m\geq 3.$ In the next section we consider the case $m=2$. 

%%%%%%%%%%%%%%%%%%%%%%%%%%%%%%%%%%%%%%%%%%%%%%%%%%%%%%%%%%%%%%%%%%%%%%%%%%%
\section{\textbf{The cubic circulant graphs}\label{cubic}} 

A circulant graph is {\it cubic} if it is three regular. 
If $G$ is a cubic circulant graph, 
then  $G = C_{2m}(a,m)$ for some $a$ with $1 \leq a < m$.  The cubic circulant graphs were characterized in \cite{DD}.

\begin{theorem}
\label{gcd=t}
Let $G = C_{2m}(a,m)$ with $1 \leq a < m$, and let $t = gcd(a,2m)$.
\begin{enumerate}
  \item If $\frac{2m}{t}$ is even, then $G= %  \simeq 
  tC_{\frac{2m}{t}}\left(1,\frac{m}{t}\right)$.
  \item If $\frac{2m}{t}$ is odd, then $G = %\simeq 
  \frac{t}{2}C_{\frac{4m}{t}}\left(2,\frac{2m}{t}\right)$.
\end{enumerate}
\end{theorem}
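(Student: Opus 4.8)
The plan is to reduce the cubic circulant $G = C_{2m}(a,m)$ to a disjoint union of smaller cubic circulants of the standard form, by first invoking Theorem~\ref{thm:discon} to split off the greatest common divisor $t = \gcd(a,2m)$, and then using Lemma~\ref{lem:iso} to normalize the resulting connection set. First I would apply Theorem~\ref{thm:discon} with the connection set $\{a,m\}$: since $\gcd(2m,a,m) = \gcd(a,m)$ and one must be careful that $t = \gcd(a,2m)$ need not equal $\gcd(a,m)$ — the two differ exactly when $a$ is even but $m/\gcd(a,m)$ is odd (equivalently, when $2 \mid 2m/\gcd(a,m)$ forces an extra factor of $2$). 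This parity distinction is precisely what produces the two cases (1) and (2) in the statement, so the case split should be organized around whether $2m/t$ is even or odd.

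In case (1), where $\frac{2m}{t}$ is even, I would check that $\gcd(a,m) = t$ as well (this is where evenness of $2m/t$ is used), so that Theorem~\ref{thm:discon} gives $G \cong t\,C_{2m/t}\!\left(\frac{a}{t}, \frac{m}{t}\right)$. Writing $a' = a/t$ and $m' = m/t$, the graph $C_{2m'}(a', m')$ is connected with $\gcd(a', 2m') = 1$, so by Lemma~\ref{lem:iso} (taking $k$ to be the inverse of $a'$ modulo $2m'$) it is isomorphic to $C_{2m'}(1, k m')$; since $k$ is a unit and $m' \mid 2m'$, one has $km' \equiv m' \pmod{2m'}$ (as $2m'$ is even, $km'$ reduces to either $0$ or $m'$, and it cannot be $0$ since $\gcd(k m', 2m')$ would not match). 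Hence $C_{2m'}(a', m') \cong C_{2m'}(1, m')$, giving $G \cong t\,C_{2m/t}(1, m/t)$ as claimed.

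In case (2), where $\frac{2m}{t}$ is odd, the number $a$ must be even (otherwise $t$ is odd and $2m/t$ is even), so $t$ is even and $\gcd(a, m) = t/2$ while $\gcd(a, 2m) = t$; thus Theorem~\ref{thm:discon} applied with $g = \gcd(2m, a, m) = t/2$ yields $G \cong \frac{t}{2}\,C_{4m/t}\!\left(\frac{2a}{t}, \frac{2m}{t}\right)$. Setting $a'' = 2a/t$ and noting $\gcd(a'', 4m/t) = 1$, Lemma~\ref{lem:iso} lets me multiply the connection set by the inverse of $a''$ modulo $4m/t$; since $2m/t$ is odd, the element $\frac{2m}{t}$ is fixed up to sign by any odd unit (more precisely, $k \cdot \frac{2m}{t} \equiv \pm\frac{2m}{t} \pmod{4m/t}$ and $\pm$ gives the same edges), so $C_{4m/t}(a'', 2m/t) \cong C_{4m/t}(2, 2m/t)$. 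The main obstacle, and the only place requiring genuine care, is tracking these gcd identities and the behavior of $m$ (or $2m$) under the unit-multiplication isomorphism — showing that the distinguished "diameter" generator $m/t$ or $2m/t$ is preserved and does not collapse to $0$ or merge with the other generator. Everything else is bookkeeping.
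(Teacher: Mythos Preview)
The paper does not prove this theorem at all; it is quoted from Davis and Domke~\cite{DD} and stated without proof. So there is nothing to compare against, and your outline is a reasonable attempt to supply the missing argument from the tools available in the paper (Theorem~\ref{thm:discon} and Lemma~\ref{lem:iso}).

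Case~(1) is fine. Case~(2), however, has a genuine gap. You write ``setting $a'' = 2a/t$ and noting $\gcd(a'', 4m/t) = 1$, Lemma~\ref{lem:iso} lets me multiply the connection set by the inverse of $a''$ modulo $4m/t$.'' But $\gcd(2a/t,\,4m/t)$ is \emph{not} $1$: since $2m/t$ is odd, $4m/t$ is twice an odd number, and $2a/t$ is even, so the gcd is exactly~$2$. Thus $a''$ is not a unit in $\mathbb{Z}/(4m/t)\mathbb{Z}$ and cannot be inverted. (There is a second inconsistency: even if you could invert $a''$, that would send $a''\mapsto 1$, not $a''\mapsto 2$, so the target graph would be wrong anyway.)

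The repair is straightforward once you see the issue. You know $\gcd(a/t,\,2m/t)=1$, so choose $k_0$ with $k_0\cdot(a/t)\equiv 1\pmod{2m/t}$; if $k_0$ is even, replace it by $k_0+2m/t$, which is odd since $2m/t$ is odd. Then $k_0$ is a unit modulo $4m/t$, and multiplication by $k_0$ sends $2a/t\mapsto 2$ (because $k_0\cdot 2a/t - 2 = 2(k_0\cdot a/t - 1)$ is divisible by $2\cdot 2m/t = 4m/t$) and fixes $2m/t$ (since $k_0$ is odd). That yields $C_{4m/t}(2a/t,\,2m/t)\cong C_{4m/t}(2,\,2m/t)$, as required. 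The overall strategy you describe is sound; only this step needs the correction.
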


	Theorem \ref{gcd=t} demonstrates that  
	the zero forcing number of a cubic circulant is going to be 
	a multiple of the zero forcing number of 
    a circulant of the form $C_{2m}(1,m)$ or $C_{2m}(2,m)$.	
	 The  zero forcing number is calculated for these classes in Theorems \ref{2n(1,n)} and \ref{2n(2,n)} respectively.

\begin{theorem}
\label{2n(1,n)}
%\cite{BFK} 
Suppose $m \geq 2$, and $G = C_{2m}(1,m)$.
Then $M(G)=Z(G)=  
 \begin{cases} 
       3 &  \mbox{if} ~~m=2 \\
       4 &  \mbox{if} ~~m\geq 3. 
   \end{cases}
$
\end{theorem}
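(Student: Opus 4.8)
The plan is to establish the claimed value of $Z(G)$ first by a two-sided argument, and then to argue that $M(G)$ meets the same value. Recall that $G = C_{2m}(1,m)$ is the M\"obius ladder $K_2 \mob C_m$ when $m \geq 3$, and is $C_4(1,2) = K_4$ when $m = 2$. For the case $m = 2$ we simply invoke Lemma~\ref{lem:facts}(2), which gives $M(K_4) = Z(K_4) = 3$. For $m = 3$, $G = C_6(1,3) = K_2 \mob C_3$, and Theorem~\ref{thm:mobK} with $n = 2$, $m = 3$ gives $M(G) = Z(G) = 4$. So the substantive case is $m \geq 4$, where $G = K_2 \mob C_m$ is a genuine M\"obius ladder.

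For $m \geq 4$, I would first observe that $G$ is $3$-regular, so Lemma~\ref{lem:facts}(\ref{lem:kregular}) gives only $Z(G) \geq 3$; this is not enough, so I would instead appeal directly to the known value $Z(K_2 \mob C_m) = 4$ recorded in Lemma~\ref{lem:facts}(7) (the M\"obius ladder $K_2 \sq C_m$ — note the M\"obius ladder and the prism have the same zero forcing number here), or reprove it: the upper bound $Z(G) \leq 4$ comes from taking the four vertices of two consecutive rungs (two consecutive copies of $K_2$), which is a zero forcing set by the same local argument as in the proof of Theorem~\ref{thm:mobK} (two adjacent filled copies of $K_2$ force outward around the ladder). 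For the lower bound $Z(G) \geq 4$ one runs the column-by-column argument of Theorem~\ref{thm:mobK} specialized to $n = 2$: a forcing set of size $3$ would, after the first force, fill one rung plus two neighbouring vertices, and the bookkeeping there forces $|F| \geq 2n = 4$, a contradiction. Since that proof explicitly covers $m \geq 4$ and $n \geq 3$, I would just cite it for the upper bound on $Z$ and give the short direct lower-bound count for $n = 2$.

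It remains to show $M(G) \geq 4$ for $m \geq 4$, since Theorem~\ref{Mbound} then forces $M(G) = Z(G) = 4$. The approach mirrors Theorems~\ref{thm:mrankmobius} and~\ref{thm:KnC6}: exhibit an explicit symmetric matrix $A \in \mathcal{S}(C_{2m}(1,m))$ with nullity at least $4$, equivalently rank at most $2m - 4$. I would write $A$ in block-circulant form reflecting the structure $K_2 \mob C_m$ — a cyclic arrangement of $m$ blocks of size $2$, with the "twist" in the wrap-around block — and choose the nonzero entries so that an explicit invertible row-reduction matrix $E$ (built from small $2\times 2$ blocks as in the earlier proofs) kills four rows of $EA$. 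The main obstacle is pinning down these entries for general $m$; the $m = 4$ and $m = 6$ instances already appear as Theorems~\ref{thm:mrankmobius} and~\ref{thm:KnC6}, and for odd $m$ or $m = 9$-type cases one may instead cite or adapt the explicit small matrices (as in Theorem~\ref{Z(G)=5}) or known minimum-rank results for M\"obius ladders from the literature. I expect the block-circulant elimination to go through uniformly in $m$ once the wrap-around twist is handled correctly, so the real work is choosing the constants; with $M(G) \geq 4$ in hand, Theorem~\ref{Mbound} closes the argument.
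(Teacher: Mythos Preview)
The paper's proof is much shorter than your plan: for $m\geq 3$ it simply identifies $G$ with the M\"obius ladder $K_2\mob C_m$ and cites \cite{AIM} directly for the known equality $M(G)=Z(G)=4$. No matrix construction, no separate $Z$ and $M$ arguments---both quantities for the M\"obius ladder were already computed in \cite{AIM}, and the paper treats this as a black box.

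Your route reinvents this, and has a few issues. First, Lemma~\ref{lem:facts}(\ref{lem:KsC}) is about the prism $K_2\sq C_m$, not the M\"obius ladder $K_2\mob C_m$; your parenthetical conflates the two. Second, and more seriously, your plan for $M(G)\geq 4$ is a genuine gap: you sketch a block-circulant elimination ``mirroring'' Theorems~\ref{thm:mrankmobius} and~\ref{thm:KnC6}, but those results require $n\geq 3$ and only treat $m\in\{4,6\}$, so they say nothing about $K_2\mob C_m$ for general $m$. You then say you ``expect'' the elimination to go through once the twist is handled and the constants are chosen, but you do not actually produce the matrix or the elimination. That is precisely the missing step---without it you only have $M(G)\leq Z(G)=4$ from Theorem~\ref{Mbound}, not equality. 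The paper avoids this entirely because the M\"obius ladder case is already done in \cite{AIM} (indeed, the last line of the proof of Theorem~\ref{thm:mobK} in the paper also invokes \cite{AIM} for exactly this). So the fix is simply to cite \cite{AIM} for the M\"obius ladder rather than attempt a new rank construction.
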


\begin{proof} 
	Let $G = C_{2m}(1,m)$. If $m =2$, then $G = K_4$ and, hence, $M(G)=Z(G)=3$ by Lemma~\ref{lem:facts}(2).
	Suppose $m\geq 3$.	Then $G$ is isomorphic to the M\"obius ladder $K_2 \mob C_m$, and hence $M(G)=Z(G)=4$ as noted
	in \cite{AIM}.
\end{proof}

\begin{theorem}
\label{2n(2,n)} Suppose $m\geq 3$ and $G=C_{2m}(2,m)$.
\begin{enumerate}
  \item If $m$ is odd, then 
  $M(G)=Z(G)=\min\{ m, 4\}$.
  \item If $m$ is even, then 
  $M(G)=Z(G)=
  \begin{cases} 
      6 & \mbox{if}~~m=4 \\
      8 & \mbox{if}~~m\geq 6.
   \end{cases}
  $
 \end{enumerate}
\end{theorem}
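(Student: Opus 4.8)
The plan is to reduce both cases to results already established, so that no new zero forcing argument is needed.

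For part (1), when $m$ is odd I would first show that $G = C_{2m}(2,m)$ is isomorphic to the prism $K_2 \sq C_m$. The cleanest route is to exhibit the bijection $\psi\colon V(K_2\sq C_m)\to\mathbb{Z}_{2m}$ given by $\psi(0,i)=2i$ and $\psi(1,i)=2i+m$ (mod $2m$). Since $m$ is odd, $\psi(0,\cdot)$ enumerates the even residues and $\psi(1,\cdot)$ the odd residues, so $\psi$ is a bijection; moreover it sends each ``cycle'' edge $(\epsilon,i)(\epsilon,i+1)$ to a pair of vertices differing by $2$ and each ``rung'' edge $(0,i)(1,i)$ to a pair differing by $m$, hence to edges of $C_{2m}(2,m)$. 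As both graphs are cubic on $2m$ vertices, $\psi$ is a graph isomorphism. (Equivalently one may use the Chinese Remainder decomposition $\mathbb{Z}_{2m}\cong\mathbb{Z}_2\times\mathbb{Z}_m$, under which $2\mapsto(0,2)$ and $m\mapsto(1,0)$, and then apply the group automorphism $(a,x)\mapsto(a,2^{-1}x)$, legitimate since $\gcd(2,m)=1$, to turn the connection set into $\{(1,0),\pm(0,1)\}$.) Once $G\cong K_2\sq C_m$ is in hand, Lemma~\ref{lem:facts}(7) gives $M(G)=Z(G)=\min\{m,4\}$ directly, and since $m\geq 3$ this is exactly the claimed value.

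For part (2), when $m$ is even I would use the disconnection result. Here $\gcd(2m,2,m)=2$, so Theorem~\ref{thm:discon} yields
\[
G = C_{2m}(2,m)\;\cong\; 2\,C_{m}\!\left(1,\tfrac{m}{2}\right) = 2\,C_{2k}(1,k),\qquad k=\tfrac{m}{2},
\]
where $C_{2k}(1,k)$ is connected. The zero forcing number is additive over disjoint unions by Lemma~\ref{lem:facts}(5), and the maximum nullity is as well: a matrix in $\mathcal{S}(2H)$ is block diagonal with two blocks in $\mathcal{S}(H)$, and nullity is additive over direct sums (the lower bound $M(2H)\geq 2M(H)$ comes from $\mathrm{diag}(A,A)$ for an optimal $A\in\mathcal{S}(H)$, and the upper bound from $M\leq Z$ of Theorem~\ref{Mbound} together with $Z$-additivity). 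Hence $Z(G)=2Z(C_{2k}(1,k))$ and $M(G)=2M(C_{2k}(1,k))$. Now I invoke Theorem~\ref{2n(1,n)} with parameter $k=m/2\geq 2$: it gives $M(C_{2k}(1,k))=Z(C_{2k}(1,k))$ equal to $3$ when $k=2$ (i.e.\ $m=4$) and to $4$ when $k\geq 3$ (i.e.\ $m\geq 6$). Multiplying by $2$ produces $M(G)=Z(G)=6$ for $m=4$ and $M(G)=Z(G)=8$ for $m\geq 6$, as claimed. (As a sanity check, $m=4$ recovers $C_8(2,4)\cong 2K_4$ from Figure~\ref{fig:discon}, with $Z=2\cdot 3=6$.)

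The computation here is essentially bookkeeping; the only place needing care is the isomorphism $C_{2m}(2,m)\cong K_2\sq C_m$ for odd $m$ — in particular verifying that $\psi$ is a bijection (this is precisely where oddness of $m$ enters) and that no spurious edges arise — together with recording the additivity of $M$ over disjoint unions, which is standard but not spelled out in Lemma~\ref{lem:facts}. Everything else follows by direct citation of Theorem~\ref{thm:discon}, Theorem~\ref{2n(1,n)}, and Lemma~\ref{lem:facts}(5),(7).
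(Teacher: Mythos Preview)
Your proof is correct and follows essentially the same approach as the paper: for odd $m$ you identify $C_{2m}(2,m)\cong K_2\sq C_m$ and invoke Lemma~\ref{lem:facts}(\ref{lem:KsC}), and for even $m$ you factor $G\cong 2C_{2k}(1,k)$ via Theorem~\ref{thm:discon} and apply Theorem~\ref{2n(1,n)}. Your version is more explicit---supplying the bijection $\psi$ and spelling out the additivity of $M$ over disjoint unions---whereas the paper argues the isomorphism by describing the two $m$-cycles and the matching in $C_{2m}(2,m)$ directly, but the substance is the same.
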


\begin{proof} Let $G=C_{2m}(2,m)$. Suppose $m=2k$ for some positive integer $k$.  Then $\gcd(2m,2,m)=2$, and
	so $G\cong 2C_{2k}(1,k)$.  Thus, $M(G)=Z(G)$ by Theorem~\ref{2n(1,n)}.

	Suppose $m$ is odd. 
	 Note that $C_{2m}(2)$ is a subgraph of $G$ consisting of two disjoint cycles of length $m$. 
	 Observe that the cycle containing vertex $v_0$ consists of the vertices with even subscripts. The other cycle will consist of
	 the vertices with odd subscripts. For any vertex $v_i \in V(G)$, its neighbours will be $\{v_{i+m}, v_{i+2},v_{i-2}\}$. 
	 Thus, besides the edges of the two aforementioned cycles, the graph $G$ also contains the perfect
	 matching consisting of edges of the form $\{v_i,v_{i+m}\}$.
	  	  It follows that $C_{2m}(2,m) \cong C_m \sq  K_2$. The result 
	  	  follows from Lemma~\ref{lem:facts}(\ref{lem:KsC}).
\end{proof}

\begin{theorem}\label{thm:an}
Let $G = C_{2m}(a,m)$ with $1 \leq a < m$, and let $t = gcd(a,2m)$.
\begin{enumerate}
  \item If $\frac{2m}{t}$ is even, then 
  $M(G)=Z(G) = 
  \begin{cases} 
      3t & \mbox{if}~~ m=2t \\
      4t & \mbox{if}~~ m\geq 3t.
   \end{cases}
  $
    \item If $\frac{2m}{t}$ is odd, then $M(G)=Z(G) = 
    \begin{cases} 
      m & \mbox{if}~~ 2m=3t \\
      2t & \mbox{otherwise}.
   \end{cases}
  $
\end{enumerate}
\end{theorem}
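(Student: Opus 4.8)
The plan is to reduce Theorem~\ref{thm:an} to the already-established Theorems~\ref{2n(1,n)} and~\ref{2n(2,n)} via the structural decomposition in Theorem~\ref{gcd=t}, together with the disjoint-union rule for zero forcing (Lemma~\ref{lem:facts}(\ref{union})) and the fact that $M$ is additive over connected components. Indeed, by Theorem~\ref{gcd=t}, with $t=\gcd(a,2m)$, either $G\cong tC_{2m/t}(1,m/t)$ (when $2m/t$ is even) or $G\cong \tfrac{t}{2}C_{4m/t}(2,2m/t)$ (when $2m/t$ is odd), so in both cases $G$ is a disjoint union of copies of one of the two graphs whose parameters were computed in the previous section, and it remains only to substitute.

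For part (1): write $p=\tfrac{2m}{t}$, which is even, so that $G\cong tC_{p}(1,p/2)$, i.e. $C_p(1,p/2)=C_{2(p/2)}(1,p/2)$ with ``$m$-parameter'' equal to $p/2=m/t$. Applying Theorem~\ref{2n(1,n)} to $C_{2(m/t)}(1,m/t)$: if $m/t=2$, i.e. $m=2t$, each component has $M=Z=3$, so $M(G)=Z(G)=3t$; if $m/t\geq 3$, i.e. $m\geq 3t$, each component has $M=Z=4$, so $M(G)=Z(G)=4t$. Here one should note that since $1\leq a<m$ we have $t=\gcd(a,2m)\leq a<m$, hence $m/t\geq 2$ always, so these are the only two cases; and the condition ``$2m/t$ even'' forces $p/2=m/t$ to be a genuine second-connection-set element less than $p/2$ only when $m/t\geq 2$, which holds. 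The additivity $Z(G_1\cup G_2)=Z(G_1)+Z(G_2)$ from Lemma~\ref{lem:facts}(\ref{union}) and the analogous additivity of $M$ over components finish this case.

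For part (2): here $2m/t$ is odd; set $q=\tfrac{4m}{t}$ (even, since $q=2\cdot\tfrac{2m}{t}$) and $G\cong \tfrac{t}{2}C_{q}(2,2m/t)$, where the component is $C_{2(2m/t)}(2,2m/t)$, i.e.\ $C_{2m'}(2,m')$ with $m'=2m/t$ \emph{odd}. Apply Theorem~\ref{2n(2,n)}(1) (the odd case): each component has $M=Z=\min\{m',4\}$ where $m'=2m/t$. If $2m/t=3$, i.e. $2m=3t$, then $\min\{m',4\}=3$, and the number of components is $t/2$, giving $M(G)=Z(G)=3t/2$; but $2m=3t$ also gives $3t/2=m$, which is exactly the stated value $m$. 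Otherwise $2m/t\geq 5$ (it is odd and $\neq 3$; note $2m/t=1$ is impossible since that would force $t=2m>a$, contradicting $a\geq 1$ unless $a$... — one must check $2m/t\geq 3$, which holds because $t\leq a<m<2m$ gives $2m/t>2$), so $\min\{m',4\}=4$ per component, and with $t/2$ components we get $M(G)=Z(G)=4\cdot\tfrac{t}{2}=2t$, as claimed.

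The main obstacle is bookkeeping rather than a genuine mathematical difficulty: one must carefully track which ``$m$'' plays the role of the cycle-length-parameter in each invocation of the earlier theorems, and verify that the small-parameter hypotheses ($m\geq 2$ in Theorem~\ref{2n(1,n)}, $m\geq 3$ odd in Theorem~\ref{2n(2,n)}) are met — in particular ruling out $2m/t=1$ and confirming $m/t\geq 2$ — using only $1\leq a<m$ and $t\mid a$. One should also double-check the boundary identifications $3t/2=m\iff 2m=3t$ and $m=2t\iff m/t=2$ so that the case splits in the statement line up exactly with those coming out of Theorems~\ref{2n(1,n)} and~\ref{2n(2,n)}. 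Once these correspondences are pinned down, the proof is a short substitution argument and can be written in a few lines.
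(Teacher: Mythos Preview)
Your proposal is correct and follows precisely the same route as the paper: the paper's proof is the single sentence ``The theorem follows from Theorem~\ref{gcd=t} and Theorems~\ref{2n(1,n)} and~\ref{2n(2,n)}, along with Lemma~\ref{lem:facts}(5),'' and your write-up simply unpacks this citation with the bookkeeping made explicit.
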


\begin{proof} 
The theorem follows from Theorem~\ref{gcd=t} and
Theorems~\ref{2n(1,n)} and \ref{2n(2,n)},
along with Lemma \ref{lem:facts}(5).
\end{proof}

Note that when $a$ and $m$ are odd, then the cubic circulant graphs
$C_{2m}(a,m)$ in Theorem~\ref{thm:an} are 
further examples of bipartite circulants 
discussed in Section~\ref{bipartite}.

%%%%%%%%%%%%%%%%%%%%%%%%%%%%%%%%%%%%%%%%%%%%%%%%%%%%%%%%%%%%%%%%%%%%%%%
\section{\textbf{Concluding comment}}\label{conclusion}

For every circulant graph $G$ for which we have calculated $M(G)$ and $Z(G)$, these two numbers have been equal. Equality also holds for the extreme cases; when $G=K_n$ or $G=C_n$. We wonder if equality holds 
for every circulant graph in general.

\medskip 

%We list some conjectures based on calculations of the zero forcing number of some low order circulants in SAGE using code from \cite{ISU}.

%\begin{conjecture}
%[Check..] If $G = %C_n(2,3,\ldots,b-1,b)$ %for 
%with $b = \lfloor %\frac{n}{2} \rfloor -1$.
%Then $Z(G) = n-4$ for $n %\geq 8$.
%\end{conjecture}

%	Lemma~\ref{lem:facts}(\ref{kregular}) %\ref{lem:kregular} %(\ref{kn,n}) ??????????????
%	gives the zero forcing number for $K_{n,n}$. The degree two bipartite circulants are cycles with zero forcing number two.  Lemma \ref{2n(2,n)} then gives the zero forcing number for the cubic bipartite graphs when $a$ and $n$ are both odd. It would be interesting to characterize the zero forcing number of all bipartite circulant graphs. This is a class that was explored in \cite{M}. Using SAGE, the zero forcing number for the members of this family with small order were calculated. 

%	Taking 
%	$S = (d+1,\ldots,\floor{\frac{n}{2}})$ for $d \geq 1$, we obtain a circulant graph 
%	$C_n(S)$ that is the complement of the
%	consecutive circulant. 
%	Based on calculations on small order graphs
%	using SAGE, we have a conjecture for the zero
%	forcing number of such graphs.
%\begin{conjecture} [Check...]
%If $G = C_n(d+1,\ldots,\floor{\frac{n}{2}})$, then $Z(G) = n -(d+2)$ for $d \leq 5$ and $|S|>2$.
%\end{conjecture}

{\bf Acknowledgments} 
To draw the circulant graphs in this paper,
we used the \LaTeX\ script of Eastman \cite{E}.
Some of the results in this paper
first 
appeared 
%in a preprint of the first three
%authors \cite{BFK}, and other parts appeared in
the MSc thesis of Riddell \cite{riddell}.
This research was supported in part
by grants held by the last two authors, 
%Vander Meulen was supported in part by 
NSERC RGPIN-2016-03867 and 
%Van Tuyl was supported in part by
NSERC RGPIN-2019-05412, respectively.

\end{document}